\numberwithin{equation}{section}
\newtheorem{thm}{Theorem}[section]
\newtheorem{cor}[thm]{Corollary}
\newtheorem{lem}[thm]{Lemma}
\newtheorem{prop}[thm]{Proposition}
\newtheorem{theorem}{Theorem}
\theoremstyle{definition}
\newtheorem{defn}[thm]{Definition}
\theoremstyle{remark}
\newtheorem{rem}[thm]{Remark}
\newcommand{\C}{\mathbb{C}}
\newcommand{\R}{\mathbb{R}}
\newcommand{\Z}{\mathbb{Z}}
\newcommand{\cp}[1]{\C P^{#1}}
\newcommand{\LL}{\vert\mathcal{L}\vert}
\newcommand{\Li}{\mathcal{L}}
\definecolor{bluegray}{rgb}{0.4, 0.6, 0.8}
\renewcommand{\emph}[1]{{\it {\color{bluegray} #1}}}
\newcommand{\A}{\mathcal{A}}
\newcommand{\ttor}{(\C^\ast)^2}
\newcommand{\D}{\mathcal D}
\newcommand{\Xb}{X^\bullet}
\newcommand{\Cb}{C^\bullet}
\newcommand{\tCb}{\widetilde C^\bullet}
\newcommand{\tC}{\widetilde C}
\newcommand{\sv}[1]{V_{#1}}
\newcommand{\irr}{\mathrm{irr}}
\newcommand{\trop}{\mathrm{tr}}
\renewcommand{\:}{\colon}
\newcommand{\pkk}{{\mathcal{P}}^{k',k}}
\DeclareMathOperator{\ord}{\textit{ord}}
\DeclareMathOperator{\conv}{conv}
\DeclareMathOperator{\conj}{conj}
\DeclareMathOperator{\New}{New}
\DeclareMathOperator{\Area}{Area}
\title{A note on the Severi problem for toric surfaces}
\author{Lionel Lang and Ilya Tyomkin}
\thanks{IT is partially supported by the Israel Science Foundation (grant No. 821/16).}
\address[Lang]{Department of Mathematics\\
	Stockholm University\\SE - 106 91\\ Stockholm\\ Sweden}
\email{lang@math.su.se}
\address[Tyomkin]{Department of Mathematics\\
	Ben-Gurion University of the Negev\\P.O.Box 653 \\Be'er Sheva\\ 84105\\  Israel }\email{tyomkin@math.bgu.ac.il}
\keywords{Severi variety, Severi problem, toric surfaces, simple Harnack curves.}
\subjclass[2010]{14H10, 14M25, 14T90}
\begin{document}

 \title{A note on the Severi problem for toric surfaces}

\begin{abstract}
In this note, we make a step towards the classification of toric surfaces admitting reducible Severi varieties. We generalize the results of \cite{L19, Tyom13, Tyom14}, and provide two families of toric surfaces admitting reducible Severi varieties. The first family is general, and is obtained by a quotient construction. The second family is exceptional, and corresponds to certain narrow polygons, which we call kites. We introduce two types of invariants that distinguish between the components of the Severi varieties, and allow us to provide lower bounds on the numbers of the components. The sharpness of the bounds is verified in some cases, and is expected to hold in general for ample enough linear systems. In the appendix, we establish a connection between the Severi problem and the topological classification of univariate polynomials.
\end{abstract}
	
\maketitle
	
\setcounter{tocdepth}{1}
\tableofcontents

\section{Introduction}

The study of families of curves on algebraic surfaces is a classical problem in algebraic geometry. In 1920's, Severi considered the locus $V^{\irr}_{g,d}$ of integral, degree $d$, genus $g$, planar curves in an attempt to prove the irreducibility of the moduli spaces of algebraic curves. Such loci for different surfaces and linear systems are now called {\em Severi varieties}. In 1986, Harris proved the irreducibility of the classical Severi varieties $V^{\irr}_{g,d}$ in characteristic zero \cite{Ha}, and very recently Christ, He, and the second author found a characteristic-free proof of Harris' theorem \cite{CHT20}. Over the years, the irreducibility of Severi varieties was proved for other surfaces, such as Hirzebruch surfaces, Del-Pezzo surfaces, and certain toric and K3 surfaces, see, e.g., \cite{Bou, CiDe, Tes, Tyom}. Most of the results apply only in the case of small genus, but some are general.

On the negative side, for many surfaces of general type, Severi varieties are known to be reducible, and even non-equidimensional, see \cite{CC99}. In 2013, the second author found first examples of reducible Severi varieties on toric surfaces, initially in positive characteristic \cite{Tyom13}, and then in characteristic zero \cite{Tyom14}. A different type of examples was discovered recently by the first author in his study of the monodromy action on the set of nodes of rational curves on toric surfaces \cite{L19}.

The goal of the current paper is to make a step towards a classification of toric surfaces admitting reducible Severi varieties, and towards a description of the corresponding irreducible components. We unify the examples of \cite{L19, Tyom13, Tyom14}, and introduce certain invariants that distinguish between components of Severi varieties on toric surfaces. As a result, we are able to provide lower bounds on the numbers of the irreducible components.

\subsection{The main results}\label{sec:mainres}

Let $g\geqslant 0$ be an integer, $\Delta\subset \R^2$ a lattice polygon, and $(X,\Li)$ the corresponding polarized toric surface. Recall that the Severi varieties $V_{0,\Li}^\irr$ are irreducible by \cite[Proposition~4.1]{Tyom}. The following theorem provides a general lower bound on the number of irreducible components of the Severi variety $V_{g,\Li}^\irr$ in the case of positive genus.

\begin{theorem}\label{thm:general}
If $g\geqslant 1$, then the number of irreducible components of the Severi variety $V_{g,\Li}^\irr$ is bounded from below by the number of affine sublattices $M\subseteq\Z^2$ for which the following two conditions hold: (a) $\partial\Delta\cap M=\partial\Delta\cap\Z^2$, and (b) $|\Delta^\circ\cap M|\geqslant g$; where $\partial\Delta$ and $\Delta^\circ$ denote the boundary and the interior of $\Delta$, respectively.
\end{theorem}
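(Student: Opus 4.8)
I would prove the theorem by associating to each affine sublattice $M\subseteq\Z^2$ satisfying (a) and (b) an irreducible subvariety $V_M\subseteq V_{g,\Li}^\irr$, and showing that distinct sublattices give rise to distinct irreducible components. The geometric idea is that a sublattice $M$ of index $k$ corresponds to a degree-$k$ covering of tori $(\C^\ast)^2\to(\C^\ast)^2$, or rather its dual, producing a finite map $\phi_M\colon X'\to X$ of toric surfaces, and curves in $|\Li|$ that factor through $\phi_M$ (i.e. are pulled back from curves on $X'$) will have extra geometric genus constraints governed by $|\Delta^\circ\cap M|$ rather than $|\Delta^\circ\cap\Z^2|$. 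Condition (a) ensures the pulled-back linear system still has Newton polygon $\Delta$ (the boundary lattice points are unchanged, so the curve has the right tangency profile with the toric boundary and lies in $|\Li|$), while condition (b) is exactly what is needed to be able to impose enough nodes to reach genus $g$ on such curves while keeping them integral.

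**Key steps, in order.** First, I would set up the correspondence: given $M$ of index $k$, realize the inclusion $M\hookrightarrow\Z^2$ (after translating so $0\in M$) dually as a surjection of tori, hence a toric morphism $\phi_M\colon (X_M,\Li_M)\to(X,\Li)$ where $X_M$ is the toric surface of the polygon $\Delta_M$ obtained by rescaling $\Delta$ under the dual lattice identification; the key numerical input is $|\Delta_M^\circ\cap\Z^2|=|\Delta^\circ\cap M|$ and $\partial\Delta_M\cap\Z^2 \leftrightarrow \partial\Delta\cap M = \partial\Delta\cap\Z^2$. Second, I would define $V_M$ as (the closure of) the locus of curves $C\in V_{g,\Li}^\irr$ of the form $C=\phi_M(C')$ for $C'$ an integral curve in the appropriate Severi variety on $X_M$; using irreducibility of the relevant genus-$0$ or positive-genus Severi varieties on $X_M$ — invoking \cite[Proposition~4.1]{Tyom} for the rational case and a dimension/incidence argument in general — together with irreducibility of the fibers of the parametrization, conclude $V_M$ is irreducible. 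Third, and this is the crux, I would show $\dim V_M$ equals the expected dimension of a Severi variety component (so $V_M$ is actually a component, not contained in the closure of a bigger locus of genus-$g$ curves), OR — more robustly — construct an invariant that takes a constant value on $V_M$ distinct from its value on $V_{M'}$ for $M\ne M'$; the natural candidate is a monodromy/lattice invariant: a general $C\in V_M$ has its normalization mapping to $X$ in a way that factors through $\phi_M$, and one recovers $M$ from $C$ as the sublattice generated by the "periods" or by the subgroup of $\Z^2$ through which the parametrizing map $\widetilde C\to(\C^\ast)^2$ factors. Finally, I would check that this factorization lattice is a deformation invariant within $V_{g,\Li}^\irr$ — it cannot jump under equisingular deformation — so a component of $V_{g,\Li}^\irr$ has a well-defined associated sublattice, and the map $M\mapsto(\text{component containing }V_M)$ is injective, giving the claimed lower bound.

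**Main obstacle.** The delicate point is the third step: ensuring that $V_M$ is genuinely an irreducible component of $V_{g,\Li}^\irr$ and that different $M$ land in different components. One subtlety is that a curve can factor through several sublattices, or the "factorization lattice" might not be the naive thing at special points of $V_M$; one must work with a generic member and show the invariant (whichever is chosen — dimension count, or the intrinsic factorization lattice, or a monodromy-of-nodes invariant in the spirit of \cite{L19}) is both locally constant on the Severi variety and genuinely distinguishes the $V_M$. A clean way around potential dimension collisions is to phase the argument entirely in terms of the invariant: define, for $[C]\in V_{g,\Li}^\irr$, the maximal sublattice $M(C)$ through which the normalization map factors (equivalently the pull-back of $\Z^2$ under the induced map on $H_1$ of a general member, suitably interpreted), prove it is upper semicontinuous or locally constant on a dense open subset of each component, check $M(C)$ satisfies (a) (from the toric boundary behavior, which is preserved) and (b) (from the genus bound — the geometric genus of a curve factoring through $M$ is at most $|\Delta^\circ\cap M|$, hence $g\le|\Delta^\circ\cap M|$), and finally exhibit for each admissible $M$ a curve with $M(C)=M$ to show all such values are attained on distinct components. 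I expect the preservation of condition (a) under deformation and the attainment statement (producing an integral genus-$g$ curve realizing exactly the lattice $M$, e.g. via a simple Harnack curve or toric degeneration construction as in \cite{L19, Tyom14}) to require the most care.
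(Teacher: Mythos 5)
Your overall strategy coincides with the paper's: the invariant you call the ``factorization lattice'' is exactly the paper's $N(V)$, the image of $H_1(\tCb,\Z)\to H_1(\Xb,\Z)=\Z^2$ (Lemma~\ref{lem:topinv}), whose rotation by $\frac{\pi}{2}$ gives $M(V)$ and satisfies (a) because a torically transverse curve meets every boundary divisor, so $N(V)$ contains all primitive ray generators (Lemma~\ref{lem:affsublat}); and the realization of each admissible $M$ goes through the quotient construction $\pi\:X'\to X$ of Lemma~\ref{lem:gentoricstuff}, exactly as you propose. That said, there are concrete gaps. First, your irreducibility argument for $V_M$ is circular: you invoke irreducibility of the positive-genus Severi variety on $X_M$, but that variety is in general reducible --- that is precisely the theorem applied to $X_M$. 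Fortunately irreducibility of $V_M$ is not needed: the paper takes a single component $V'$ of $V^\irr_{g,\Li'}$ containing one explicitly constructed curve and pushes it forward. Second, your ``dimension count \emph{or} invariant'' dichotomy is a logical slip: both are needed. The invariant separates components, but one must also know that $\pi_*(V')$ is dense in a \emph{full} component of $V_{g,\Li}^\irr$; otherwise its generic member need not be generic in the ambient component, and the invariant (which Lemma~\ref{lem:topinv} only controls at general, torically transverse points) computed on it says nothing about that component. This is where condition (a) enters quantitatively: it gives $|\partial\Delta\cap M|=|\partial\Delta\cap\Z^2|$, hence equality of the dimensions $|\partial\Delta\cap\Z^2|+g-1$ of the Severi varieties on $X$ and $X'$, as in Lemma~\ref{lem:propofSev}(1)--(2).

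Third, and most importantly, the attainment step --- producing for each admissible $M$ an integral genus-$g$ curve whose invariant is exactly $M$ and not a larger lattice --- is the technical heart of the proof, and you leave it entirely to ``a simple Harnack curve or toric degeneration construction''. The paper carries this out in two ways: tropically, by building a convex $M$-integral triangulation of $\Delta$ with exactly $g$ interior vertices whose vertex set generates $M$ (this is where condition (b) is used: one needs $|\Delta^\circ\cap M|\geqslant g$ interior $M$-points to serve as vertices), taking the dual trivalent genus-$g$ tropical curve, and lifting it by regularity and realizability; or topologically, by contracting a prescribed set $E$ of $\delta$ ovals of a smooth simple Harnack curve via the area coordinates underlying Theorem~\ref{thm:contraction} (which rests on Theorem~\ref{thm:integrals}), and computing via the order map that the resulting component satisfies $M(V)=\langle(\Delta\cap\Z^2)\setminus E\rangle$ (Lemma~\ref{lem:harnackGphi}). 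Either route requires a genuine argument, so your proposal identifies the correct skeleton but omits the load-bearing step.
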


For any sublattice $M\subset\Z^2$, we set $\delta_M(\Delta,g):=|\Delta^\circ\cap M|-g$. Then condition (b) reads: $\delta_M(\Delta,g)\geqslant 0$.
We now restrict our attention to a special family of lattice polygons that we call kites. Let $k',k$ be non-negative integers such that $k'\geqslant k$ and $k'>0$. A {\em kite $\Delta_{k,k'}$} is the polygon with vertices $(0,0),(\pm 1,k), (0,k+k')$. Let $\Delta=\Delta_{k,k'}$ be a kite, and $g\geqslant 0$ an integer. We define \emph{$\#_{k,k',g}$} to be the number of sublattices $M\subseteq \Z^2$ satisfying the conditions (a) and (b) of Theorem~\ref{thm:general} and counted with the following multiplicities: if the index of $M$ in $\Z^2$ is odd, then the multiplicity is equal to the number of integers $0\leqslant \kappa\leqslant \min\{\delta_M(\Delta,g), g\}$ that are congruent to $\delta_M(\Delta,g)$ modulo $2$, and if the index is even, then the multiplicity is one.

\begin{theorem}\label{thm:kite}
If $\Delta=\Delta_{k,k'}$ is a kite, and $(X,\Li)$ the corresponding polarized toric surface, then for any $g\geqslant 1$, the number of irreducible components of $V_{g,\Li}^\irr$ is bounded from below by $\#_{k,k',g}$.
\end{theorem}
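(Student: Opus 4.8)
The plan is to deduce Theorem~\ref{thm:kite} from Theorem~\ref{thm:general} by constructing, for each sublattice $M$ satisfying (a) and (b), a family of distinct irreducible components of $V_{g,\Li}^\irr$ whose cardinality matches the stated multiplicity. The geometric input is that a curve $C$ in a Severi variety component associated to $M$ has its normalization mapping to $X$ so that, up to translation by $(\C^\ast)^2$, the ``interior'' monomials of a defining equation are supported on $\Delta^\circ\cap M$; in other words the geometric genus $g$ is realized by choosing $g$ of the $|\Delta^\circ\cap M|$ lattice points of $M$ interior to $\Delta$ to be the vanishing loci of the curve, and the sublattice $M$ records the monodromy-invariant ``index'' of the tuple of nodes. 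For a general lattice this choice is rigid and contributes one component, which is exactly Theorem~\ref{thm:general}; the point of kites is that their extreme narrowness forces a secondary discrete invariant.

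First I would recall the explicit geometry of kites: $\Delta_{k,k'}$ has width $1$ on either side of the vertical axis, so $\partial\Delta\cap\Z^2$ consists of the vertices together with the points $(0,j)$ for $0\le j\le k+k'$ and $(\pm1,k)$, and $\Delta^\circ\cap\Z^2$ is a single vertical segment $\{(0,j):1\le j\le k+k'-1\}$ minus nothing — a one-dimensional lattice configuration. A sublattice $M$ with $\partial\Delta\cap M=\partial\Delta\cap\Z^2$ must contain $(0,1)$ and $(\pm1,k)$, hence must contain $(1,k)-(-1,k)=(2,0)$ and $(1,k)=(1,0)+k(0,1)$ forces... I would carefully enumerate: $M$ is generated by $(0,1)$ and $(a,b)$ with $a\mid 2$; writing out the constraint that all required boundary points lie in $M$ shows that $M$ is determined by its index, and that the index can be any positive integer $n$ with the parity of $n$ governing whether $M$ contains a point with odd first coordinate. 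When $n$ is even, $M\cap\{x\text{-coordinate}=\pm1\}=\emptyset$, so the interior points available to $M$ are again just a sub-progression of the vertical segment and the choice of which $g$ of them carry the genus is rigid up to the residual symmetry — contributing multiplicity one. When $n$ is odd, $M$ does meet the lines $x=\pm1$, creating extra interior lattice points of $M$ off the axis, and the distribution of the $g$ chosen points between the axis and the off-axis locus becomes a genuine invariant; counting the admissible distributions modulo the kite's reflection symmetry $x\mapsto -x$ yields precisely the number of $\kappa\in[0,\min\{\delta_M(\Delta,g),g\}]$ with $\kappa\equiv\delta_M(\Delta,g)\pmod 2$.

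The technical heart is therefore twofold. First, I must upgrade Theorem~\ref{thm:general} to see that in the kite case each such pair $(M,\kappa)$ really gives a \emph{separate} component: this means exhibiting a finer monodromy-type or topological invariant — presumably one of the ``two types of invariants'' promised in the abstract — that is constant on each component and separates the $(M,\kappa)$. I expect this to be done by specializing to a nodal curve near a toric degeneration (a tropical/simple Harnack curve model, whence the keywords), computing the invariant combinatorially on the tropical side, and checking it is deformation-invariant. Second, I must verify the lattice enumeration above is exhaustive and that the reflection symmetry identifications are exactly accounted for, so that no component is double-counted and none is missed; this is the routine-but-delicate combinatorial bookkeeping.

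The main obstacle, I anticipate, is not the lattice count but establishing that the secondary invariant $\kappa$ is well-defined on components and takes distinct values — i.e.\ that the $\#_{k,k',g}$ candidate families are pairwise non-isomorphic as components rather than merely as combinatorial labels. Equivalently, one needs a non-degeneration/irreducibility argument within each label showing the corresponding stratum of $V_{g,\Li}^\irr$ is connected, together with a separation argument across labels; the kite's narrowness is what makes both tractable, since the curves are essentially hyperelliptic-type and their nodes are forced into a nearly one-dimensional pattern whose combinatorics can be read off explicitly.
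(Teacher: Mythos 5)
There is a genuine gap: the secondary invariant $\kappa$ is misidentified, and the mechanism you propose for it cannot work. You describe the extra invariant for odd index as ``the distribution of the $g$ chosen points between the axis and the off-axis locus'' of interior lattice points of $M$. But a kite $\Delta_{k,k'}$ has \emph{no} interior lattice points off the $y$-axis: its width is at most $2$, equal to $2$ only at height $k$, where the points $(\pm1,k)$ are vertices, hence on the boundary. So $\Delta^\circ\cap M\subseteq\{0\}\times\Z$ for every admissible $M$, and the dichotomy you build on is empty. Relatedly, your claim that for even index $M\cap\{x=\pm1\}=\emptyset$ contradicts condition (a), which forces $(\pm1,k)\in M$; and the index of $M$ is not an arbitrary positive integer but a common divisor of $k+k'$ and $2k$ (the paper's Remark~\ref{rem:kitearith}).

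The invariant the paper actually uses is the \emph{signature} of the \emph{nodal partition}, which lives on the curve, not on the lattice. A section of $\Li$ on a kite has the form $f(z,w)=az^{-1}w^k+p(w)+bzw^k$, so every node satisfies $z^2=a/b$; the $\delta$ nodes therefore split into two unordered blocks by the sign of $z$, and $\kappa(V):=|\delta_1-\delta_2|$ is locally constant on $V_{g,\Li}^\irr$. The parity dichotomy in the multiplicity comes from the quotient construction behind Theorem~\ref{thm:general}: for a sublattice of even index $r$ the covering $X'\to X$ forces all nodes coming from nodes of the upstairs curve into a single block, so $\kappa(V)=g+1$ is automatic (multiplicity one), while for odd $r$ the blocks are preserved and every $\kappa$ with $0\le\kappa\le\min\{\delta_M(\Delta,g),g\}$ and $\kappa\equiv\delta_M(\Delta,g)\pmod 2$ must be \emph{realized} by an explicit construction --- the paper does this either by Shustin-type patchworking of a carefully chosen $M$-integral triangulation of the kite (choosing twists of Chebyshev refinements to steer nodes into one block or the other), or by degenerating simple Harnack curves and reading the partition off the order map. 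Your proposal correctly anticipates the overall architecture (two invariants, tropical/Harnack realizations, reduction to an enumeration of admissible labels), but without the correct definition of $\kappa$ neither the well-definedness on components nor the realization of all admissible values can be carried out, so the proof does not go through as written.
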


We provide two proofs of the main results. One is tropical and another is topological. The topological approach is based on the study of deformations of simple Harnak curves, and one of its key ingredients is the following generalization of a result of Kenyon and Okounkov \cite[Proposition 10]{KO}, which we believe to be of an independent interest.

Let $[C]\in V_{g,\Li}^\irr$ be a curve, and $\Upsilon:=\{\gamma_1,\dotsc,\gamma_g\}$ a collection of  oriented simple closed curves in the smooth locus of $C$ that are contained in the open torus orbit $\Xb\subset X$ and are contractible in $\Xb$. Since $\Xb$ is acyclic, each $\gamma_j$ bounds a smooth disc $M_j\subset \Xb$ that is unique up to homotopy. Set $\Omega:= \frac{dz\wedge dw}{zw}$, where $(z,w)$ are toric coordinates on $\Xb$, and consider the integrals
\begin{equation}\label{eq:integrals}
\int_{M_j} \Omega \; , \quad j=1,\dotsc,g.
\end{equation}
Since $\Omega$ is closed, these integrals are independent of the choice of $M_j$'s. Furthermore, since $\Omega$ vanishes on $C$, the integrals are also independent of the choice of $\gamma_j$'s within their isotopy classes. Finally, any small deformation $C_t$ of $C$ induces a small deformation $\Upsilon_t$ of $\Upsilon$. Although the later is not uniquely defined, the isotopy classes of $\gamma_{j,t}$'s are. Therefore, integrals \eqref{eq:integrals}, induce well-defined functions on $V_{g,\Li}^\irr$ in a neighborhood of $[C]$.

\begin{theorem}\label{thm:integrals}
Let $W\subset V_{g,\Li}^\irr$ be the locus of curves having a given tangency profile with the boundary divisor, $[C]\in W$ any curve, and $\Upsilon$ a collection of loops as above. If $\Upsilon$ generates a $g$-dimensional subspace of $H_1(C,\Z)$, then the integrals \eqref{eq:integrals} together with the coordinates of the intersection points of $C$ with the boundary divisor of $X$ provide a local system of coordinates on $W$.
\end{theorem}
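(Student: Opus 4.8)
The plan is to prove that the map
\[
\Phi\colon W \longrightarrow \C^g\times\C^n,\qquad [C']\longmapsto\Big(\textstyle\int_{M_1}\Omega,\dotsc,\int_{M_g}\Omega,\ \xi_1(C'),\dotsc,\xi_n(C')\Big)
\]
is a local isomorphism onto a smooth subvariety, where $\xi_1,\dotsc,\xi_n$ are the coordinates of the $n$ points in which $C'$ meets the boundary divisor (for the fixed tangency profile). The starting point is the deformation-theoretic description of the tangent space: a first-order deformation of $C$ in $X$ preserving the tangency profile is a holomorphic section $N$ of the normal sheaf $N_\phi$ of the map $\phi\colon\tC\to X$ which is tangent to $\partial X$ to the prescribed order at the marked points $P:=\phi^{-1}(\partial X)$; and $T_{[C]}W$ is the space of such $N$ modulo those coming from reparametrizations. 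To each $v=[N]\in T_{[C]}W$ I attach the $1$-form $\beta_v:=(\iota_{\widetilde N}\Omega)|_C$ on $\tC$, where $\widetilde N$ is any lift of $N$ to $\phi^*TX$; this is well defined because $\Omega|_C=0$, so the $1$-form $\iota_{\widetilde N}\Omega|_C$ is unchanged under $\widetilde N\mapsto\widetilde N+u$, $u\in T\tC$.

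The two key computations are the following. First, by Cartan's formula and Stokes' theorem applied to the fixed closed form $\Omega$ and the moving disc $M_j$ (the boundary of $M_j$ sweeps out with velocity $N$, up to a tangential vector that drops out because $\Omega|_C=0$),
\[
d\Big(\textstyle\int_{M_j}\Omega\Big)(v)=\oint_{\gamma_j}\iota_{\widetilde N}\Omega=\oint_{\gamma_j}\beta_v.
\]
Second, a local computation at each $p\in P$: choosing toric coordinates near $\phi(p)$ with the relevant boundary divisor $\{z=0\}$ and writing $\widetilde N=a\,z\partial_z+b\,w\partial_w$, one has $\iota_{\widetilde N}\Omega=a\tfrac{dw}{w}-b\tfrac{dz}{z}$; since the contact order $m_p$ is fixed, $a$ and $b$ are regular at $p$, $\tfrac{dw}{w}|_C$ is regular there while $\tfrac{dz}{z}|_C$ has a simple pole of residue $m_p$, and $b(p)$ is the logarithmic derivative of the corresponding boundary coordinate; hence $\beta_v\in H^0(\tC,\omega_{\tC}(P))$ and $\operatorname{res}_p\beta_v=\pm m_p\,d\log\xi_s(v)$. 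Consequently $d\Phi(v)$ is, up to an invertible triangular change of the $\xi$-coordinates, the pair $\big((\oint_{\gamma_j}\beta_v)_j,(\operatorname{res}_p\beta_v)_p\big)$ read off from $\beta_v$.

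It remains to prove two injectivity statements. (i) The assignment $v\mapsto\beta_v$ is injective: if $\beta_v=0$ then $\iota_{\widetilde N}\Omega$ annihilates $T C$, but a line in a symplectic surface is its own symplectic orthogonal, so $\widetilde N\in T\tC$ and $v=0$. (ii) The map $H^0(\tC,\omega_{\tC}(P))\to\C^g\times\C^n$, $\beta\mapsto\big((\oint_{\gamma_j}\beta)_j,(\operatorname{res}_p\beta)_p\big)$, is injective: if all residues of $\beta$ vanish then $\beta$ has no poles (a simple pole has nonzero residue), so $\beta\in H^0(\tC,\omega_{\tC})$; and a holomorphic $1$-form with $\oint_{\gamma_j}\beta=0$ for all $j$ vanishes because, by hypothesis, $V:=\langle[\gamma_1],\dotsc,[\gamma_g]\rangle$ is $g$-dimensional and, being spanned by disjoint simple closed curves lying in $\Xb$ and contractible there, is isotropic for the intersection form, hence a rational Lagrangian; completing the $\gamma_j$ to a symplectic basis and invoking Riemann's bilinear relations, the "$a$-periods" determine a holomorphic form. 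Since $\dim H^0(\tC,\omega_{\tC}(P))=g+n-1$, statement (ii) identifies the image of $d\Phi$ with a fixed $(g+n-1)$-dimensional subspace, namely $\{\sum_p\operatorname{res}_p\beta=0\}$ (the linearization of the relation $\prod_s\xi_s^{\pm 1}\equiv\mathrm{const}$), while (i) and (ii) together show $d\Phi$ is injective. Comparing with the standard lower bound $\dim W\geq g+n-1$ for Severi varieties forces equality, so $W$ is smooth at $[C]$ and $d\Phi$ is an isomorphism onto that subspace; thus the integrals \eqref{eq:integrals} and the boundary coordinates form a local system of coordinates on $W$.

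The main obstacle is step (ii), and within it the assertion that periods over $\Upsilon$ detect every holomorphic differential. For $g\geq 2$ this is genuinely false for an arbitrary $g$-dimensional rational subspace of $H_1(\tC,\Z)$, so the argument must exploit the special nature of $\Upsilon$: one needs that loops contained in $\Xb$ and contractible there, representing a $g$-dimensional subspace, can be realized by pairwise disjoint simple closed curves on $C$, so that $V$ is isotropic and Riemann's bilinear relations become available. Making this precise, together with pinning down exactly which meromorphic $1$-forms occur as $\beta_v$ — i.e. the identification $T_{[C]}W\xrightarrow{\ \sim\ }H^0(\tC,\omega_{\tC}(P))$, in which the control of the polar parts along $P$ reflects the fixed tangency profile, and the smoothness of $W$ at $[C]$ — is the technical heart of the proof; by comparison, the residue and Stokes computations and the non-degeneracy argument of (i) are routine.
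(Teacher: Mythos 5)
Your proposal is built on the same core mechanism as the paper's proof (Theorem 6.1 in Section 6): the derivative of $\int_{M_j}\Omega$ along a first-order deformation is computed via Cartan's formula and Stokes' theorem as $\oint_{\gamma_j}$ of the contraction of $\Omega$ with a vector field realizing the deformation, and one then argues that a $1$-form with vanishing periods over $\Upsilon$ must vanish. The packaging, however, differs in two respects. First, the paper does not work with the full tangent space and meromorphic forms in $H^0(\widetilde C,\omega_{\widetilde C}(P))$: it first restricts to the sublocus $U^{C}$ of curves meeting $\partial X$ in the \emph{same} points as $C$, proves that $\Phi_\Upsilon|_{U^{C}}$ is a local diffeomorphism, and only then appends the boundary coordinates; this confines the analysis to honest holomorphic forms and makes your residue computation at the punctures unnecessary. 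Second, in place of the abstract identification $T_{[C]}W\cong H^0(\widetilde C,\omega_{\widetilde C}(P))$ --- which you rightly call the technical heart and leave open --- the paper uses an explicit one (Lemma 6.2): holomorphic forms on $\widetilde C$ are exactly the $\pi^*\bigl(h\,dz/(zw\,\partial_w f)\bigr)$ for adjoint Laurent polynomials $h$ vanishing at the nodes, the deformation is the concrete pencil $f-th$, and the flow $\phi_t$ is written down by hand. Your route buys a direct, uniform treatment of the boundary coordinates via residues and the relation $\sum_p\operatorname{res}_p\beta=0$; the paper's buys concreteness and a shorter non-degeneracy argument (your step (i) is replaced by the evident linear independence of the $\xi_h$'s).

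On the crux you isolate in step (ii): you are right that injectivity of the period map on $H^0(\widetilde C,\omega_{\widetilde C})$ requires more than $\dim\langle\Upsilon\rangle=g$; one needs $\langle\Upsilon\rangle$ to be isotropic for the intersection form, so that $H^{1,0}\cap\langle\Upsilon\rangle_{\C}=0$ by the Riemann bilinear relations. But your proposed justification --- that loops contained in $\Xb$ and contractible there can always be realized by pairwise disjoint simple closed curves --- does not follow from the stated hypotheses: for $g\geqslant 2$ the kernel of $H_1(\widetilde C^\bullet,\Z)\to H_1(\Xb,\Z)$ has corank at most $2$ and hence is in general \emph{not} isotropic, while every primitive class in it is represented by a simple closed curve contractible in $\Xb$. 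So this step is a genuine gap in your argument as written. You should be aware, though, that the paper's own proof elides the same point entirely (it asserts that the $g$ classes $[\gamma_j]$ ``form a basis of $H_1(\widetilde C,\Z)$'', which is impossible for rank reasons), and that in the only place the theorem is applied (Section 7) the collection $\Upsilon$ consists of the $g$ pairwise disjoint ovals of a simple Harnack curve, for which the span is genuinely Lagrangian and the Riemann-relations argument you invoke goes through. In short: your diagnosis of the weak point is more careful than the paper's treatment, but your proposed repair is not yet a proof, and the theorem should really be read with an isotropy hypothesis on $\langle\Upsilon\rangle$.
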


We refer to \cite[Lemma 3.1]{IM} for a similar statement in the case of curves on K3-surfaces.

\subsection{Discussion}\label{sec:tandp}

To prove the irreducibility, or more generally, to classify the irreducible components of a Severi variety, one often follows the following strategy. First, one proves that the closure of any component contains components of Severi varieties of smaller genera, and then uses monodromy type argument to get an upper bound on the number of components.

The question, whether the closure of any component of a Severi variety on a toric surface necessarily contains components of Severi varieties of smaller genera, is an interesting open problem. This is known to be the case for $\mathbb P^2$ by \cite{Ha}, for Hirzebruch surfaces by \cite{Tyom}, and most generally, for toric surfaces associated to h-transverse polygons by \cite{CHT20a}. We expect this to be true in general, but we were not able to prove this so far. Let us denote the union of the irreducible components of $V_{g,\Li}^\irr$ containing $V_{0,\Li}^\irr$ in their closure by $V'_{g,\Li}$.

A monodromy argument allows one to provide an upper bound on the number of components of $V'_{g,\Li}$. Indeed, it is not difficult to verify that in a neighborhood of an integral, rational, nodal curve $C_0$, the closure of $V'_{g,\Li}$ consists of smooth branches, parameterized by subsets of $g$ nodes of $C_0$. Therefore, the number of such components is bounded from above by the number of orbits of sets of $g$ nodes under the monodromy action on the set of nodes of $C_0$. For planar curves, the monodromy acts as the full symmetric group by \cite{Ha}. More generally, this is the case if the toric surface is smooth at (at least) one of its zero-dimensional orbits, and the polarization is ample enough; see \cite{L19} for details. Thus, the upper bound in such cases is one.

To get a lower bound on the number of components of a Severi variety, one must find invariants that can distinguish between the components. This is precisely what we do in the current paper. Unfortunately, even if $V'_{g,\Li}=V_{g,\Li}^\irr$, there is often a discrepancy between the upper bound prescribed by the monodromy calculations of \cite{L19}, and the lower bounds coming from the admissible values of the invariants we construct.

One could naively guess that the monodromy action on the nodes of $C_0$ should prescribe the number of components of $V'_{g,\Li}$. However, this turns out to be wrong as the following example shows. Let $\Delta$ be the triangle with vertices $(0,0),(4,1),(0,3)$, and $(X,\Li)$ the corresponding polarized toric surface. Consider the Severi variety $V_{2,\Li}^\irr$. Using the techniques of \cite{CHT20a} and \cite{CHT20}, one can show that any component of $V_{2,\Li}^\irr$ contains $V_{0,\Li}^\irr$ in its closure, and that such component is unique. However, the number of orbits of pairs of nodes of $C_0$ under the monodromy action is two. Therefore, it is not sufficient to analyze the monodromy action on the sets of nodes of rational curves. We hope that investigating the monodromy actions on the nodes of curves of higher genera could close the gap between the lower and the upper bounds in most cases.

Next let us discuss the sharpness of the lower bounds in our main results, and speculate on the expected number of the irreducible components of Severi varieties on toric surfaces in general. First, notice that Theorem~\ref{thm:kite} implies that the bound of Theorem~\ref{thm:general} is not sharp for some special toric surfaces. However, we expect the bound to be equal to the actual number of irreducible components of $V_{g,\Li}^\irr$ at least if the line bundle $\Li$ is ample enough. In the case of kites, we have $V'_{g,\Li}=V_{g,\Li}^\irr$ by \cite{CHT20a}. However, there is a discrepancy between the upper and the lower bounds, and we expect the lower bound of Theorem~\ref{thm:kite} to be the correct one. Proposition~\ref{prop:kite1} verifies this guess for curves of genus one.

In the current paper, we discuss two families of toric surfaces admitting reducible Severi varieties. The surfaces treated in Theorem~\ref{thm:general}, are quotients of toric surfaces by non-trivial subgroups of the torus that act freely on the complement of the zero-dimensional orbits. For such surfaces, the Severi varieties corresponding to {\em any} polarization and genus are reducible, as long as the genus is strictly positive and small enough with respect to the polarization. The surfaces treated in Theorem~\ref{thm:kite}, form an exceptional family. In particular, any surface in the second family that does not belong to the first family admits reducible Severi varieties for finitely many polarizations by \cite{L19}. We believe that the remaining examples, if any, also form finitely many exceptional families, and correspond to some narrow polygons.

In the appendix, we establish a connection between the Severi problem on kites and the topological classification of polynomials. For kites, the irreducible components of the Severi variety are in bijective correspondence with the components of the space of Laurent polynomials with given passports. We believe this connection to be fruitful for both topics.

\subsection{The idea of the proofs}
In order to prove the main results of this paper, we introduce two types of invariants that distinguish between the irreducible components of the Severi variety $V_{g,\Li}^\irr$. We then describe the admissible values of these invariants, and for each admissible value, construct an irreducible component realizing this value. We provide two proofs of the main theorems, one is topological, and another is tropical. We believe that both approaches will be useful in the ultimate classification of the irreducible components of Severi varieties on toric surfaces.

Our first invariant is a sublattice of the lattice of monomials of the toric surface, that can be described either in topological or in tropical terms. The second invariant makes sense only for polarized toric surfaces associated to kites. The set of nodes of an integral curve on such a surface admits a natural (unordered) partition, that varies continuously with the curve. Therefore, the absolute value of the difference between the cardinalities of the two blocks of nodes is an invariant of an irreducible component of $V_{g,\Li}^\irr$, that we call {\em the signature} of the component.

\medskip
	
\noindent {\bf Acknowledgements.} We are grateful to Georgios Dimitroglou Rizell, Henri Guenancia, and Grigory Mikhalkin for enlightening discussions. Special thanks are due to Grigory Mikhalkin for a discussion which lead us to the assertion of Theorem~\ref{thm:integrals}.

\section{Preliminaries, notation, and conventions}
\subsection{Toric geometry}
We assume that the reader is familiar with the basics of toric geometry, and refer to \cite{Dan78} for details. In particular, the construction of the polarized toric variety associated to a lattice polygon, the notion of the dual fan, the functoriality of toric varieties, and the structure of their orbit decomposition are assumed to be known. For a convex polygon $\Delta$, we denote by $\Delta^\circ$ and $\partial\Delta$ the interior and the boundary of $\Delta$, respectively.

Our default pair of dual lattices is $(\Z^2,\Z^2)$ with the standard pairing denoted by $\langle\cdot,\cdot\rangle$. Sometimes we consider different integral structures on $\R^2$ defined by various sublattices. In such cases, we follow the standard convention of toric geometry, and denote the lattice of monomials by $M$ and the lattice of cocharacters by $N$. For a toric surface $X$, we denote by $\Xb$ the open orbit, and by $\partial X:=X\setminus\Xb$ the boundary divisor. Then the canonical divisor of $X$ is given by $K_X=-\partial X$.

\subsection{Severi varieties}
For a projective polarized toric surface $(X,\Li)$, we denote by $V_{g,\Li}^\irr \subset \LL$ the Severi variety, i.e., the locus of integral nodal curves of geometric genus $g$ that contain no zero-dimensional orbits of $X$.

Severi varieties on toric surfaces are known to be smooth and equidimensional of codimension $\delta$ in $\LL$, where $\delta$ is the number of nodes, or equivalently, the difference between the arithmetic genus of $\Li$ and $g$; see, e.g.,  \cite[Theorem~1]{KST13}. If $\Li$ is associated to a lattice polygon $\Delta$ in $\R^2$, then its arithmetic genus is given by the number of inner lattice points in $\Delta$. Therefore, the dimension of any component of $V_{g,\Li}^\irr$ is given by
\begin{equation}\label{eq:dimsev}
|\partial\Delta\cap \Z^2|+g-1
\end{equation}
Furthermore, by \cite[Proposition~2]{KST13}, $V_{g,\Li}^\irr$ is open and dense in the locus of all integral curves of geometric genus $g$ in $\LL$ that contain no singular points of $X$.

We say that a curve in $C\subset X$ is \emph{torically transverse} if it intersects the boundary divisor $\partial X$ transversely, i.e., $C\cap X$ is reduced and contains no zero-dimensional orbits of $X$. It is well known that for a general $[C]\in V_{g,\Li}^\irr$, the curve $C$ is torically transverse; see e.g., \cite[Theorem~2.8]{Tyom}.

Recall that in genus zero, Severi varieties on toric surfaces admit natural parameterizations by irreducible rational varieties. Therefore, $V_{0,\Li}^\irr$ are irreducible for any polarized toric surface, see \cite[Proposition~4.1]{Tyom}.

\subsection{Conventions}

Throughout the paper we work over the field of complex numbers. The tropical arguments are applied after a base change to the field of Puiseux series.

For convenience, {\em we always assume that the lattice polygon, we work with, contains the origin as its vertex.} The latter can plainly be achieved by a translation, and hence does not restrict the generality. The advantage of this assumption is that all sublattices that appear in the statements and in the proofs become {\em linear} rather than affine.

\section{The case of genus one}
In this section we show that the bound of Theorem~\ref{thm:kite} is sharp in the case of genus one.

\begin{prop}\label{prop:kite1}
If $\Delta=\Delta_{k,k'}$ is a kite, and $(X,\Li)$ the corresponding polarized toric surface, then the number of irreducible components of $V_{1,\Li}^\irr$ is equal to $\#_{k,k',1}$.
\end{prop}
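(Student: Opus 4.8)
The plan is to combine Theorem~\ref{thm:kite}, which already gives the lower bound $\#_{k,k',1}$, with a matching upper bound obtained by analyzing the structure of $V_{1,\Li}^\irr$ near its most degenerate members. First I would invoke \cite{CHT20a}: since a kite is an h-transverse polygon (its edges have slopes $0$, $k$, $-k$, \dots — one checks directly that $\Delta_{k,k'}$ is h-transverse after the standard affine change of coordinates), the closure of every irreducible component of $V_{1,\Li}^\irr$ contains $V_{0,\Li}^\irr$, so $V'_{1,\Li}=V_{1,\Li}^\irr$. Hence it suffices to bound the number of components of $V'_{1,\Li}$, and for this I would use the local picture discussed in Section~\ref{sec:tandp}: near an integral rational nodal curve $C_0\in V_{0,\Li}^\irr$, the closure of $V'_{1,\Li}$ is a union of smooth branches indexed by the individual nodes of $C_0$, so the components of $V_{1,\Li}^\irr$ are in bijection with the orbits of nodes of $C_0$ under the monodromy action on the node set. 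Thus the upper bound is the number of such monodromy orbits.

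The core of the argument is therefore to compute the monodromy orbits on the nodes of a generic rational curve on the kite surface, and to match them with the combinatorial count $\#_{k,k',1}$. Here I would use the two invariants introduced in the paper as the bookkeeping devices: the sublattice invariant $M\subseteq\Z^2$ (which for genus one must satisfy (a) $\partial\Delta\cap M=\partial\Delta\cap\Z^2$ and (b) $|\Delta^\circ\cap M|\geqslant 1$) and the signature invariant coming from the natural partition of the node set of a curve on a kite into two blocks. A node's monodromy orbit is constrained by which $M$ it "realizes" (topologically, by the class in $H_1$ of the vanishing cycle; tropically, by the lattice generated by the relevant edge vectors of the dual subdivision) and, when the index $[\Z^2:M]$ is odd, by which block of the kite partition it sits in — the signature then records $|\kappa|$ where the relevant component has $\kappa$ and $g-\kappa=1-\kappa$ nodes of the two types that can be merged. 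The multiplicity rule in the definition of $\#_{k,k',g}$ (counting $0\le\kappa\le\min\{\delta_M(\Delta,1),1\}$ with $\kappa\equiv\delta_M(\Delta,1)\bmod 2$ for odd index, and multiplicity one for even index) is exactly the list of admissible signature values for each fixed $M$; I would show the monodromy acts transitively on the nodes sharing a fixed $(M,\text{signature})$ label, using the Harnack/simple-Harnack degeneration to exhibit enough explicit loops, or alternatively the tropical realization of Theorem~\ref{thm:kite}'s construction together with a vertex-by-vertex analysis of the dual subdivisions of $\Delta_{k,k'}$.

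The step I expect to be the main obstacle is the \emph{transitivity} half of the monodromy computation: showing that no finer invariant than $(M,\text{signature})$ can separate two nodes of $C_0$. The general smooth-zero-dimensional-orbit argument of \cite{L19} gives full symmetric-group monodromy only when the polarization is ample enough and the surface is smooth at a torus-fixed point, neither of which holds for a general kite; the vertices $(\pm1,k)$ of $\Delta_{k,k'}$ are typically singular points of $X$, which is precisely what creates the extra invariants. So the transitivity proof has to be done by hand for kites: degenerate to a simple Harnack curve (or to a maximally tropical curve), enumerate the nodes, and exhibit explicit vanishing-cycle interchanges — for the even-index sublattices the whole block of nodes with a given $M$ should be a single orbit, while for the odd-index sublattices the index-two obstruction splits each orbit further according to the parity/signature, and one must verify there is no further splitting. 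Once transitivity is established, the bijection between monodromy orbits and the terms counted by $\#_{k,k',1}$ follows by matching labels, and together with Theorem~\ref{thm:kite} this pins the number of components of $V_{1,\Li}^\irr$ to exactly $\#_{k,k',1}$.
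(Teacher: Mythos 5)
Your strategy (lower bound from Theorem~\ref{thm:kite} plus a matching monodromy upper bound at a rational nodal curve $C_0$) is genuinely different from the paper's, and it has a gap at exactly the step you flag as the main obstacle: the transitivity of the monodromy on each $(M,\kappa)$-class of nodes is the entire content of the upper bound, and you do not prove it. This is not a routine verification. The general transitivity mechanism of \cite{L19} is unavailable here, since kites are generally singular at all zero-dimensional orbits and the polarization is far from ample enough; and the paper's own discussion in Section~\ref{sec:tandp} warns that for kites the monodromy upper bound and the lower bound $\#_{k,k',g}$ do \emph{not} agree in general. For $g=1$ the relevant question is transitivity on single nodes within a fixed class, e.g.\ for $\Delta_{0,p}$ with $p$ prime one must show the monodromy permutes all $p-1$ nodes transitively; via the identification $f=a/z+p(w)+bz$ this amounts to a connectivity statement for spaces of univariate polynomials with two prescribed critical values, which is precisely the circle of questions the appendix ties to an open conjecture of Zvonkin. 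Without an actual argument here (explicit vanishing-cycle interchanges, a braid-group computation, or a citation that covers kites), the upper bound is unproved. A secondary slip: the branches of $\overline{V'_{1,\Li}}$ at $C_0$ surject onto, but are not a priori in bijection with, the set of components, so even a correct orbit count only gives an inequality — which is all you need, but the phrase ``in bijection'' overstates it.

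For comparison, the paper avoids monodromy entirely. It normalizes a general $[C]\in V_{1,\Li}^\irr$ to an elliptic curve $(E,O)$ with marked points $P,Q,R$ over the four boundary divisors, observes that the divisors of the two coordinate functions force $P=Q+R$ and $(k'+k)Q=(k'-k)R$ in the group law, and shows conversely that such data determine a point of $V_{1,\Li}^\irr$. The component count thus reduces to counting components of the locus $V=\{[E;O,Q,R]\in\mathcal M_{1,3}\mid (k'+k)Q=(k'-k)R\}$, which is decomposed explicitly as $\bigcup_{d'\mid d}V_{d'}$ with $d=\gcd(k'+k,2k)$, each $V_{d'}$ being the irreducible image of the level-structure moduli space $\mathcal M_{1,2}[d']$ and distinguished by the order of the torsion point $mQ-nR$. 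This yields both bounds at once, gives $\sigma(d)$ (or $\sigma(d)-1$ when $k=k'$ or $k=0$) directly, and sidesteps the degeneration and transitivity issues on which your proposal currently rests. If you want to salvage your route, the piece you must supply is a self-contained computation of the monodromy orbits of single nodes for kites; as written, the proposal does not establish the proposition.
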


\begin{rem}\label{rem:kitearith}
Notice that a sublattice $M\subseteq\Z^2$ satisfying the condition (a) of Theorem~\ref{thm:general} for $\Delta=\Delta_{k,k'}$ is uniquely determined by its restriction to the $y$-axis, and contains the point $(0,2k)$. Furthermore, associating to a sublattice $M$ its index $r$, gives rise to a one-to-one correspondence between such sublattices and the positive common divisors of $k'+k$ and $2k$.
\end{rem}

\begin{proof}
By Remark~\ref{rem:kitearith}, the number $\#_{k,k',1}$ counts the positive common divisors $r$ of $k'+k$ and $2k$ for which $\delta_M(\Delta,1)\geqslant 0$, with multiplicities. Since $\delta_M(\Delta,1)=\big(\frac{k'+k}{r}-1\big)-1$, the latter inequality is equivalent to $r<k'+k$. We claim that all divisors $r$ are counted with multiplicity one. Indeed, if $r$ is even, then its multiplicity is $1$ according to the definition, and if $r$ is odd, then the multiplicity is equal to the number of integers $0\le\kappa\le \min\{\delta_M(\Delta,1), 1\}\le 1$ that are congruent to $\delta_M(\Delta,1)$ modulo two. Since such $\kappa$ is clearly unique, this proves the claim. Set now $d:=\gcd\{k'+k,2k\}=\gcd\{k'+k,k'-k\}$, and let $\sigma(d)$ be the number of positive divisors of $d$. Unless $k=k'$ or $k=0$, the inequality $r<k'+k$ is automatically satisfied, and the number $\#_{k,k',1}$ is therefore equal to $\sigma(d)$. Otherwise, the divisor $r=d$ has to be excluded and $\#_{k,k',1}$ is then equal to $\sigma(d)-1$.

Let $[C]\in V_{1,\Li}^\irr$ be a general point, and $E$ the normalization of the curve $C$. Denote by $O,P,Q,R$ the preimages in $E$ of the toric divisors corresponding to the sides of $\Delta$ with outer normals $(-k,-1),(k,-1),(k',1),(-k',1)$, respectively. Then $k'(Q-R)+k(P-O)$ and $Q+R-P-O$ are the divisors of the pull-backs of the coordinate functions on the torus, and hence, in the group law of the elliptic curve $(E,O)$ the following holds:
\begin{equation}\label{eq:elrel}
P=R+Q\; \text{ and }\; (k'+k)Q=(k'-k)R.
\end{equation}

Conversely, given an elliptic curve $(E,O)$ and three points $P,Q,R\in E$ satisfying \eqref{eq:elrel} such that $O,P,Q,R$ are distinct, pick rational functions $x,y$ such that ${\rm div}(x)=k'(Q-R)+k(P-O)$ and ${\rm div}(y)= Q+R-P-O$. Then the rational map $(x,y)\: E\setminus\{O,P,Q,R\}\to \ttor$ extends to a morphism $\phi\:E\to X$ and $[\phi_*E]\in |\Li|$. Furthermore, since the pull-back under $\phi$ of the boundary divisor $\partial X$ is reduced, $E$ is birational onto its image, and hence $[\phi(E)]\in V_{1,\Li}^\irr$. Notice that the functions $x$ and $y$ as above are unique up-to an action of $\C^*$. Therefore, the number of irreducible components of $V_{1,\Li}^\irr$ is equal to the number of irreducible components of the locus
$$V:=\{[E;O,Q,R]\in\mathcal M_{1,3}\,|\, (k'+k)Q=(k'-k)R\}$$ in the moduli space of genus-one curves with three marked points.

To describe the components of $V$, set $m:=\frac{k'+k}{d}>0, n:=\frac{k'-k}{d}\geqslant 0$, and let $a,b\in\Z$ be integers such that $am+bn=1$. For a divisor $d'|d$, let $\mathcal M_{1,2}[d']$ be the moduli space of elliptic curves with marked point and level-$d'$ structure, i.e., $\mathcal M_{1,2}[d']$ parametrizes elliptic curves with marked points $[E;O,S,T]$ where $O$ is the origin of $E$, $S$ a marked points, and $T$ a torsion point of order precisely $d'$. The projection $\mathcal M_{1,2}[d']\to \mathcal M_{1,1}[d']$ forgetting the point $S$ has irreducible fibers, and it is well known that $\mathcal M_{1,1}[d']$ is irreducible, since it is the quotient of the upper half-plane by the modular group $\Gamma_1(d')$, see, e.g., \cite[pp.439-440]{Sil09}. Hence $\mathcal M_{1,2}[d']$ is also irreducible.

Pick a general point $[E;O,S,T]\in \mathcal M_{1,2}[d']$.
Then, the point $O$ is distinct from $mS-bT$ since $m>0$ and $S$ is not a torsion point. For the same reason, the point $O$ is distinct from $nS+aT$ unless $n=0$ and $T=O$, that is $k'=k$ and $d'=1$. Eventually, the points $mS-bT$ and $nS+aT$ are distinct unless $(m-n)S$ is the torsion point $(a+b)T$. The latter occurs only if  $m=n$. However, $m$ and $n$ are relatively prime. Thus, $mS-bT=nS+aT$ if and only if $m=n=1$ (implying that $a+b=1$) and $T=O$, that is $k=0$ and $d'=1$. Besides these particular cases, the three points $O, nS+aT, mS-bT$ are distinct. Since the points are distinct, we can consider the rational map $\mathcal M_{1,2}[d']\to \mathcal M_{1,3}$ given by $[E;O,S,T]\mapsto [E;O,nS+aT,mS-bT]$. Its image $V_{d'}$ is irreducible, has the same dimension as $V$ and is contained in $V$ since
\[(k+k')(nS+aT)=dmnS+dmaT=dmnS=dmnS+dnbT=(k-k')(mS-bT).\]
Hence $V_{d'}$ is an irreducible component of $V$.

Next, let us show that $V=\bigcup_{d'|d}V_{d'}$, which implies that any irreducible component of $V$ is of the form $V_{d'}$ for some $d'$. Pick a general point $[E;O,Q,R]\in V$. By dimension reasons, $Q,R$ are not torsion points. Let us show that $[E;O,Q,R]\in\bigcup_{d'|d}V_{d'}$. Set $S':=aR+bQ$. Then $$d(Q-nS')=dQ-danR-dbnQ=dQ-damQ-dbnQ=dQ-dQ=O,$$ since, by definition, $dnR=(k'-k)R=(k'+k)Q=dmQ$ and $am+bn=1$. Thus, $Q-nS'$ is a $d$-torsion point. Similarly, $R-mS'$ is a $d$-torsion point. Set $T_1:=Q-nS'$ and $T_2:=R-mS'$. Since $am+bn=1$, the vectors $(n,m)$ and $(a,-b)$ form a free basis of $\Z^2$, and hence there exists a unique pair of $d$-torsion points $T,T'\in E$ such that $(T_1,T_2)=(nT',mT')+(aT,-bT)$. If we set $S:=S'+T'$, then $Q=nS+aT$ and $R=mS-bT$ as needed.

Finally, notice that if $d'|d$, $[E;O,S,T]\in \mathcal M_{1,2}[d']$, and $[E;O,Q,R]=[E;O,nS+aT,mS-bT]$ is its image in $V_{d'}$, then $d'$ is precisely the order of $T=mQ-nR\in E$, and hence the components corresponding to different $d'$-s are distinct. We have proved that the number of irreducible components of $V$, and hence also of $V_{1,\Li}^\irr$, is equal to $\sigma(d)$ unless $k'=k$ or $k=0$, in which case it is equal to $\sigma(d)-1$. Thus, in both cases, it coincides with  the number $\#_{k,k',1}$ as asserted.
\end{proof}

\section{The invariants}\label{sec:topinv}
In this section, we define the invariants that allow us to obtain the lower bounds in Theorems~\ref{thm:general} and \ref{thm:kite}. The first invariant is a certain sublattice of $\Z^2$, that can be defined either topologically or tropically for any component of the Severi variety $V_{g,\Li}^\irr$ on a toric surface. The second invariant is an integer, called \emph{the signature}, and is specific to the Severi varieties on the toric surfaces associated to kites.

\subsection{The sublattices associated to the irreducible components}\label{sec:toplat}
Throughout this section, $\Delta\subseteq \R^2$ is a convex lattice polygon one of whose vertices is the origin, $\Sigma$ its dual fan, and $(X,\Li)$ the associated polarized toric surface. Denote by $x^m$ the monomial functions of $X$ for $m\in\Z^2$, and by $\{n_i\}\subset\Z^2$ the set of primitive vectors along the rays of $\Sigma$.

Let $N\subseteq\Z^2$ be a sublattice containing all $n_i$'s. Since $\Sigma$ is a complete fan, $n_i$'s generate $\R^2$ as a vector space, and hence $N\subseteq \Z^2$ has finite index, which we denote by $r$. Moreover, since $N$ contains primitive vectors, and any primitive vector in $\Z^2$ can be completed to a free basis of $\Z^2$, it follows that the quotient $\Z^2/N$ is a cyclic group of order $r$. Set
\begin{equation}\label{eq:Mdef}
M:=\{m\in\Z^2\,|\,\forall n\in N, \; \langle n,m\rangle\in r\Z\},
\end{equation}
where $\langle\cdot,\cdot\rangle$ denotes the standard scalar product on $\R^2$. Then $M\subset\Z^2$ is the sublattice of index $r$ obtained from $N$ by a rotation by $\frac{\pi}{2}$. Furthermore, $M=r\cdot {\rm Hom}(N,\Z)$.

\begin{lem}\label{lem:affsublat}
In the above notation the following holds: $\partial\Delta\cap M=\partial\Delta\cap\Z^2$.
\end{lem}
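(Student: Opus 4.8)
The claim relates the sublattice $M$ (defined by orthogonality conditions modulo $r$ with respect to $N$) to the lattice points on the boundary $\partial\Delta$. The key geometric fact is that $\partial\Delta$ is a union of edges, each of which lies on a supporting line whose direction is related to one of the primitive normal vectors $n_i$ along the rays of $\Sigma$. The plan is to analyze each edge separately and show that every integral point on it already lies in $M$; conversely $M\subseteq\Z^2$ gives the reverse inclusion for free.

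First I would fix an edge $e$ of $\Delta$, with primitive outer normal $n_i\in N$, say $e$ lies on the affine line $\langle n_i, \cdot\rangle = c$ for some integer $c$. Choose any lattice vertex $v$ of $\Delta$ lying on $e$ (for instance, after the standing assumption that the origin is a vertex of $\Delta$, one may reduce to the case $v=0$ when $e$ contains the origin, handling the general edge by translating by a lattice vector — but note the lattice $M$ is not translation-invariant, so I must be careful and instead work with differences). The cleanest route: let $m\in\partial\Delta\cap\Z^2$, lying on edge $e$ with outer normal $n_i$. I want to show $\langle n, m\rangle\in r\Z$ for all $n\in N$. Write $n = \sum a_j n_j$ is not available since the $n_j$ need not be a basis; instead use that $N$ is generated by the $n_j$'s together with — no: the hypothesis is only that $N$ contains all $n_i$, not that they generate it. So I cannot reduce to the $n_i$. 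Hence the argument must genuinely use the definition $M = r\cdot\mathrm{Hom}(N,\Z)$ and the cyclic structure of $\Z^2/N$.

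Here is the approach I would actually carry out. Since $\Z^2/N$ is cyclic of order $r$, pick a generator $\bar u$, lifted to $u\in\Z^2$; then $N = \{n\in\Z^2 : n \equiv 0 \pmod{\text{the subgroup}}\}$ and for $m\in\Z^2$ one has $m\in M$ iff $\langle u, m\rangle \equiv 0\pmod r$ (this follows because $\langle\cdot,\cdot\rangle$ induces a perfect pairing $\Z^2/N \times M^\vee \to \Z/r$, and $M = r\mathrm{Hom}(N,\Z)$ translates into a single congruence against the generator $u$). So it suffices to show: for every lattice point $m$ on $\partial\Delta$, $\langle u, m\rangle \equiv \langle u, 0\rangle = 0\pmod r$, using that the origin is a vertex. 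Equivalently, $\langle u, m - v\rangle \equiv 0$ for any two lattice points $m,v$ on $\partial\Delta$ (chaining through vertices). Now $m - v$, when $m,v$ lie on a common edge $e$ with primitive normal $n_i$, is an integer multiple of the primitive edge direction $w_e$, and $w_e$ is orthogonal to $n_i$; since $n_i\in N$ and $N$ has index $r$ with $\Z^2/N$ cyclic, the primitive vector $n_i$ being in $N$ forces — by completing $n_i$ to a basis $(n_i, w_e)$ of $\Z^2$ — that $w_e$ generates $\Z^2/N$, so $\langle u, w_e\rangle$ is a unit mod $r$... that is the \emph{wrong} direction. Let me instead observe: $N\ni n_i$ primitive means $\Z^2/N$ is generated by the image of $w_e$, so I can take $u = w_e$; then $\langle u, m-v\rangle = \langle w_e, (\text{multiple of } w_e)\rangle$, which need not vanish.

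I see the real mechanism now, and this is the step I expect to be the main obstacle to state cleanly: one must use that $M$ is obtained from $N$ by rotation by $\pi/2$, i.e. $M = \rho(N)$ where $\rho(x,y)=(-y,x)$ — the excerpt asserts this. Then $m\in M$ iff $\rho^{-1}(m)\in N$. For $m$ on edge $e$, $\rho^{-1}(m)$ is $\rho^{-1}$ of a point on a segment of direction $w_e$, hence lies on a segment of direction $\rho^{-1}(w_e) = \pm n_i^\perp{}^\perp$... concretely $\rho^{-1}(w_e)$ is parallel to $n_i$ (rotating the edge direction by $-\pi/2$ gives the normal direction). So $\rho^{-1}(m) - \rho^{-1}(v)$ is an integer multiple of the primitive normal $n_i\in N$. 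Thus it remains only to show $\rho^{-1}$ of \emph{some} lattice point on $\partial\Delta$ lies in $N$ — and then chaining along edges through vertices, using $n_i\in N$ at each edge, gives it for all of $\partial\Delta\cap\Z^2$. For the base point: the origin is a vertex, $\rho^{-1}(0)=0\in N$. Hence $\partial\Delta\cap\Z^2\subseteq M$. The reverse inclusion $\partial\Delta\cap M\subseteq \partial\Delta\cap\Z^2$ is immediate from $M\subseteq\Z^2$.

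So the steps, in order: (1) recall $M = \rho(N)$ with $\rho$ the quarter-turn, so membership in $M$ is equivalent to $\rho^{-1}(\cdot)\in N$; (2) note $0\in M$ since $0$ is a vertex of $\Delta$; (3) for a lattice point $m$ on an edge $e$ with primitive outer normal $n_i$, and $v$ another lattice point on $e$, show $\rho^{-1}(m-v)\in\Z n_i\subseteq N$, because $m-v\in\Z w_e$ and $\rho^{-1}(w_e)=\pm n_i$ (edge direction rotated by $-\pi/2$ is the normal direction, and primitivity of both matches the signs up to $\pm1$); (4) walk around $\partial\Delta$ from the origin through successive vertices, adding up the edge contributions, to conclude $\rho^{-1}(m)\in N$, i.e. $m\in M$, for every $m\in\partial\Delta\cap\Z^2$; (5) combine with $M\subseteq\Z^2$ for equality. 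The main obstacle is organizing step (3)–(4) without sign errors — i.e. making precise the claim that for a primitive vector $n_i$ the orthogonal complement line in $\Z^2$ is spanned by a primitive vector $w_e$ with $\rho^{-1}(w_e)=\pm n_i$ — and making sure the "chaining through vertices" is legitimate, which it is because every vertex of $\Delta$ is a lattice point and consecutive edges share a vertex, so the telescoping sum of the $m-v$ increments stays inside $N$ at each step.
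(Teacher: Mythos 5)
Your argument is correct and is essentially the paper's own proof: the paper likewise observes that the primitive integral vector along each edge of $\Delta$ is the quarter-turn of the corresponding $n_i\in N$, hence lies in $M=\rho(N)$, and then (implicitly) chains along $\partial\Delta$ from the origin vertex. Your write-up just makes the base point $0\in M$ and the telescoping along consecutive edges explicit, which the paper leaves to the reader.
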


\begin{proof}
Since the primitive integral vector along an edge of $\Delta$ is obtained from the primitive integral vector along the corresponding ray of $\Sigma$ by a rotation by $\frac{\pi}{2}$, it follows that $M$ contains the primitive integral vectors along all edges of $\Delta$, which implies the assertion.
\end{proof}

\subsubsection{The topological point of view}
For $[C]\in V_{g,\Li}^\irr$, set $\Cb:=C\cap\Xb$, and let $\tCb$ be the normalization of the curve $\Cb$. We denote by \emph{$N_C$} the image of the map $H_1(\tCb, \Z) \rightarrow H_1(\Xb, \Z)=\Z^2$ induced by the natural map $\tCb \rightarrow \Xb$. Notice that since the lattice of monomials of $X$ is $\Z^2$, the open orbit $\Xb$ is canonically trivialized, and therefore so is the first homology group $H_1(\Xb, \Z)$.

\begin{lem}\label{lem:topinv}
Let $V\subseteq V_{g,\Li}^\irr$ be an irreducible component. Then, for a general $[C]\in V$, the sublattice $N_C\subseteq \Z^2$ is independent of $[C]$, and contains all the $n_i$'s.
\end{lem}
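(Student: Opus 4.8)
<br>

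The lemma has two parts --- that $N_C$ is independent of the general $[C]\in V$, and that $\{n_i\}\subseteq N_C$ --- and I would treat them separately, doing the inclusion first since it reduces to a local computation. By the preliminaries the general member $C$ of $V$ is torically transverse; fix such a $C$ and let $\widetilde C\to C$ be its normalization, a smooth projective genus-$g$ curve. Write $D_i\subset\partial X$ for the torus-invariant divisor attached to the ray $\R_{\geqslant 0}n_i$ of $\Sigma$, and set $\ell_i:=C\cdot D_i$, which equals the lattice length of the corresponding edge of $\Delta$ and is hence $\geqslant 1$. Toric transversality means that the preimage in $\widetilde C$ of $C\cap D_i$ consists of $\ell_i$ points at each of which $C$ meets $D_i$ transversally, and that $\widetilde C^\bullet$ is $\widetilde C$ with the union over $i$ of all these points deleted. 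Fix one such point $p$, lying over $D_i$. For every $m\in\Z^2$ the standard toric formula $\mathrm{div}_X(x^m)=\sum_i\langle n_i,m\rangle D_i$ together with transversality gives $\ord_p(x^m|_{\widetilde C})=\langle n_i,m\rangle$; hence a small loop $\gamma_p\subset\widetilde C^\bullet$ around $p$ has winding number $\langle n_i,m\rangle$ against $x^m$ for all $m$, so its class maps to $\pm n_i$ under $H_1(\widetilde C^\bullet,\Z)\to H_1(\Xb,\Z)\cong\Z^2$ (the identification being by winding numbers). Thus $n_i\in N_C$, and letting $i$ range gives $\{n_i\}\subseteq N_C$. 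The point to watch is that it is the transversality of $C$ along $\partial X$, not merely that $C$ meets each $D_i$, that promotes ``$N_C$ contains a positive multiple of $n_i$'' to ``$N_C$ contains $n_i$''.

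For the constancy, the plan is to exhibit a dense open $U\subseteq V$ over which $N_C$ is locally constant; since $V$, being an irreducible component of the smooth variety $V_{g,\Li}^\irr$, is a connected complex manifold, $U$ is connected and local constancy then gives constancy. Take $U$ to be the locus of torically transverse curves in $V$: it is open, and it is dense because the Severi variety is equidimensional, so $V$ is not contained in the proper closed non-transverse locus. Over $U$ the number of nodes is constant, so the universal family of curves over $U$ admits a simultaneous normalization $\widetilde{\mathcal C}\to U$, which is a proper smooth family of genus-$g$ curves, hence a topological fiber bundle by Ehresmann. The preimage of $\partial X$ in $\widetilde{\mathcal C}$, call it $\mathcal D$, is over $U$ a finite unramified cover $\mathcal D\to U$ of degree $\sum_i\ell_i$, i.e.\ a covering space; deleting it leaves $\widetilde{\mathcal C}^\bullet:=\widetilde{\mathcal C}\setminus\mathcal D\to U$, still a topological fiber bundle (relative Ehresmann), with fiber $\widetilde C^\bullet_{[C]}$. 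Consequently $[C]\mapsto H_1(\widetilde C^\bullet_{[C]},\Z)$ is a local system on the connected base $U$, and the fibrewise morphism $\widetilde{\mathcal C}^\bullet\to\Xb\times U$ induces a morphism from it to the constant local system $\underline{H_1(\Xb,\Z)}=\underline{\Z^2}$, constant because $\Xb$ does not vary. Parallel transport on the target being trivial, the image of $H_1(\widetilde C^\bullet_{[C]},\Z)\to\Z^2$ is invariant under parallel transport, i.e.\ $N_C$ is independent of $[C]\in U$.

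The genuine content, I expect, lies in the second paragraph --- in assembling the family of punctured normalizations into an honest topological fiber bundle over a connected base. This needs the simultaneous normalization (for which constancy of the node count along the Severi variety is exactly what one uses), toric transversality to keep the punctures disjoint and the cover $\mathcal D\to U$ unramified, and a relative Ehresmann argument to delete that covering from the proper bundle $\widetilde{\mathcal C}\to U$. Once that is in place, the constancy is formal, and the inclusion $\{n_i\}\subseteq N_C$ follows from the local computation of the first paragraph.
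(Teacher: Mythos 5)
Your argument is correct and follows essentially the same route as the paper: equinormalization of the tautological family over the (dense, connected) torically transverse locus to get a locally trivial fibration of punctured normalizations, hence local constancy of $N_C$, together with the local computation that a small loop around a preimage of a transverse intersection point with $D_i$ maps to $\pm n_i$ in $H_1(\Xb,\Z)$. You merely spell out the details (Ehresmann, the order-of-vanishing formula $\ord_p(x^m|_{\widetilde C})=\langle n_i,m\rangle$) that the paper leaves implicit.
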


\begin{proof}
Since $V$ parameterizes curves of the same genus, the tautological family $C_V\to V$ is equinormalizable, i.e., the normalization $\widetilde{C}_V\to V$ is a family of smooth curves of genus $g$, and hence topologically, it is a locally trivial fibration. Furthermore, since generically the family $C_V\to V$ is torically transverse, there exists an open dense subset $B\subseteq V$, over which the fibration $\widetilde{C}_B^\bullet\to B$ is locally trivial. Finally, since $B$ is connected, the image of the first homology group of the fibers of $\widetilde{C}_B^\bullet\to B$ in $H_1(\Xb, \Z)$ is constant. This proves the first part of the statement.

The second part follows from the fact that the fibers of $C_B\to B$ intersect all components of the boundary divisor, and the intersection is transverse. If $p$ is an intersection point of $C$ with the component corresponding to a ray $\rho$ of the dual fan, then the image of a small loop around $p$ corresponds to the class of the primitive integral vector along $\rho$ in $\Z^2=H_1(\Xb, \Z)$.
\end{proof}

Set $\emph{N(V)}:=N_C$ for a general $[C]\in V$. By the lemma, it is an invariant of the irreducible component $V$. Let \emph{$M(V)$} be the lattice dual to $N(V)$ in the sense of \eqref{eq:Mdef}, i.e., the lattice obtained from $N(V)$ by a rotation by $\frac{\pi}{2}$. Then $\partial\Delta\cap M(V)=\partial\Delta\cap\Z^2$ by Lemma~\ref{lem:affsublat}. We say that $(N(V),M(V))$ is the pair of sublattices \emph{topologically associated} to the irreducible component $V$.

\subsubsection{The tropical point of view}
Let $K$ be the field of Puiseux series over $\C$, and $\nu\:K\to \R\cup\{\infty\}$ its valuation. Let $[C]\in V_{g,\Li}^\irr(K)$ be a $K$-point. Denote by $\tC$ its normalization, and let $h\:\Gamma\to \R^2$ be the canonical tropicalization of the natural map $f\:\tC\to X$; cf. \cite{Tyo12} and \cite[\S~4.2]{CHT20}. Let us briefly remind how the canonical tropicalization works.

The underlying graph of $\Gamma$ is the dual graph of the stable reduction of the curve $\tC$ with marked points (or divisor) $f^*(\partial X)$: its vertices correspond to the components of the reduction, edges -- to the nodes, and legs -- to the marked points. The incidence relation among the vertices, edges, and legs is the natural one coming from the incidences between the components of the reduction, the nodes, and the specializations of the marked points. The length of the edge corresponding to a node $p$ of the reduction is defined to be $\nu(\lambda)$, where $\lambda\in K$ is such that, \'etale locally at $p$, the stable model of $\tC$ is given by $xy=\lambda$. Although $\lambda$ depends on the neighborhood, its valuations does not, and hence the length is well defined. The lengths of the legs are infinite.

The parametrization $h$ is the piecewise integral affine map uniquely determined by its slopes along the legs and its values at the vertices, which are defined as follows. If $l$ is the leg corresponding to a marked point $p$, then the slope $\frac{\partial h}{\partial\vec l}$ of $h$ along $l$ is given by the order of pole of the monomial functions at $p$, i.e., $\langle\frac{\partial h}{\partial\vec l},m\rangle=-\mathrm{ord}_p(f^*(x^m))$ for any monomial function $x^m$ on $X$. The notation $\vec l$ here indicates that the leg $l$ is oriented away from the vertex adjacent to it. For a vertex $v$, corresponding to an irreducible component $\tC_v$ of the reduction, $h(v)\in\mathbb{Q}^2$ is defined to be the vector for which $\langle h(v),m\rangle=\nu(\lambda_m)$ for any $m\in\Z^2$, where $\lambda_m\in K$ is a scalar such that $\lambda_mf^*(x^m)$ restricts to a non-zero rational function on $\tC_v$. Once again, although $\lambda_m$ is not uniquely defined, its valuation is, and hence the definition makes sense.

For a parametrized tropical curve $h\:\Gamma\to \R^2$, if $\vec e$ is an oriented edge or leg of $\Gamma$, then the slope $\frac{\partial h}{\partial\vec e}$ is an integral vector, whose integral length is the stretching factor of the piecewise integral affine map $h$. We denote by $N(\Gamma)\subseteq\Z^2$  the sublattice generated by all the slopes $\frac{\partial h}{\partial\vec e}$.  If the degree of $\Gamma$ is dual to $\Delta$, and $\Delta=\cup\Delta_i$ is the Legendre dual subdivision of $\Delta$ associated to $\Gamma$, then the affine sublattice $M(\Gamma)\subseteq \Z^2$ generated by all vertices of the subdivision is dual to $N(\Gamma)$ in the sense of \eqref{eq:Mdef}, i.e., it is obtained from $N(\Gamma)$ by a rotation by $\frac{\pi}{2}$. If $h\:\Gamma\to \R^2$ is the tropicalization of $f\:\tC\to X$ corresponding to a point $[C]\in V_{g,\Li}^\irr(K)$, then we set $\emph{N_C^\trop}:=N(\Gamma)$ and $\emph{M_C^\trop}:=M(\Gamma)$.

For an irreducible component $V\subseteq V_{g,\Li}^\irr$, let $B\subseteq V$ be the open dense locus of torically transverse curves. Set $N^\trop(V):=\sum_{[C]\in B(K)}N_C^\trop\subseteq \Z^2$ and $M^\trop(V):=\sum_{[C]\in B(K)}M_C^\trop\subseteq \Z^2$. We call this pair of sublattices the sublattices \emph{tropically associated} to the irreducible component $V$. Notice that since for any $[C]\in B$, the curve $C$ is torically transverse, it follows that the slopes of all legs of its tropicalization $h\:\Gamma\to\R^2$ are primitive integral vectors belonging to the rays of the dual fan $\Sigma$. Therefore, the sublattice $N_C^\trop$ contains all the $n_i$'s, and $\partial\Delta\cap M^\trop(V)=\partial\Delta\cap \Z^2$.

\begin{rem}
One can show that the topological and the tropical lattice invariants coincide, i.e., $(N(V),M(V))=(N^\trop(V),M^\trop(V))$. Since we will not use this, we only indicate the main idea and leave the details to an interested reader. The general idea is to compare the lattices using the curves in $V$ defined over the  subfield $F\subset K$ of convergent Puiseux series, and to prove that (i) all possible tropicalizations of curves in $V$ are realized by curves defined over $F$, and (ii) the irreducible component $V$ contains a Mumford curve $C$ defined over $F$, whose tropicalization is regular. Property (i) allows one to show that $N^\trop(V)\subseteq N(V)$, and property (ii) -- to deduce the equality.
\end{rem}

\subsection{The nodal partition and the signature}\label{sec:nodalpart}
Let $\Delta=\Delta_{k,k'}$ be a kite, and $0\le g\le |\Delta^\circ\cap\Z^2|$ an integer. Set $\delta:=\delta_{\Z^2}(\Delta,g)$. Then $\delta$ is the number of nodes of the curve $C$ for any $[C]\in V_{g,\Li}^\irr$. We claim that the set of $\delta$ nodes of $C$ admits a natural unordered partition into two blocks, which we call \emph{the nodal partition} of $C$. Indeed, let $f \in H^0(X,\Li)$ be a Laurent polynomial defining $C$. Since $C$ is torically transverse, all nodes of $C$ belong to $\Xb$, and hence are given by the system of equations
$f=z\partial_z(f)=w\partial_w(f)=0.$ Furthermore, since $\Delta$ is a kite, the polynomial $f$ is of the form $f(z,w)=az^{-1}w^k+p(w)+bzw^k$, where $p(w)$ is a polynomial of degree $k+k'$, and $a,b\in \C^*$. Thus, any node $(z,w)\in C$ satisfies: $-az^{-1}+bz=0,$ and hence $z^2=\frac{a}{b}$. We conclude, that the set of nodes of $C$ admits a natural partition into two blocks, and a pair of nodes belongs to the same block if and only if their $z$-coordinates coincide.

Plainly, the nodal partition varies continuously in the tautological family $C_{V_{g,\Li}^\irr}\to V_{g,\Li}^\irr$. Furthermore, the number of nodes in the block corresponding to a given value of $z$ is equal to the number of critical points of $\frac{p(w)}{w^k}$ with critical value $\frac{p(w)}{w^k}=-2bz$. Therefore, the number of nodes in each block is bounded from above by $\lfloor\frac{k+k'}{2}\rfloor=\lfloor\frac{\delta+g+1}{2}\rfloor=\lceil\frac{\delta+g}{2}\rceil$.

Notice that the monodromy acts naturally on the two blocks of nodes of $C$, and the blocks get interchanged, when $\frac{a}{b}$ travels along a loop around $0\in \C$. Therefore, the partition is unordered, and the induced integer partition $\delta=\delta_1+\delta_2$, where $\delta_1\geqslant\delta_2$, is an invariant of the irreducible component $V\subseteq V_{g,\Li}^\irr$ containing $[C]$. The \emph{signature} of $C$ and of $V$, is defined to be $\emph{\kappa(V)}:=\kappa(C):=\delta_1-\delta_2$. It is clear from the definition, that for any irreducible component $V\subseteq V_{g,\Li}^\irr$, its signature satisfies the following properties: $\kappa(V)$ is congruent to $\delta$ modulo $2$, and $\kappa(V)+\delta=2\delta_1\le 2 \lceil\frac{\delta+g}{2}\rceil$. Thus, $$0\le \kappa(V)\le \min\left\{\delta,2\left\lceil\frac{\delta+g}{2}\right\rceil-\delta\right\}.$$
In particular, if $k+k'$ is even, then $\delta+g$ is odd, and hence $0\le\kappa(V)\le\min\{\delta,g+1\}$. And if $k+k'$ is odd, then $\delta+g$ is even, and hence $0\le\kappa(V)\le\min\{\delta,g\}$.

\section{Tropical proofs of the main results}

Let $\Delta\subset \R^2$ be a lattice polygon containing the origin as one of its vertices, and $\Sigma$ its dual fan. Let $N\subseteq\Z^2$ be a sublattice containing the primitive vectors along all the rays of $\Sigma$. Recall that $N$ is a sublattice of a finite index $r$, and the quotient $\Z^2/N$ is isomorphic to the cyclic group $\Z/r\Z$. Let $M\subseteq \Z^2$ be the sublattice obtained from $N$ by a rotation by $\frac{\pi}{2}$. Then $M=r\cdot {\rm Hom}(N,\Z)$ and $N=r\cdot {\rm Hom}(M,\Z)$. Since the origin is a vertex of $\Delta$, the sublattice $M$ contains all vertices of $\Delta$. Denote by $\mu_r$ the kernel of the natural surjective homomorphism $N\otimes_\Z\C^*\to \Z^2\otimes_\Z\C^*=(\C^*)^2$.

\begin{lem}\label{lem:gentoricstuff}
Let $(X,\Li)$ and $(X',\Li')$ be the polarized toric varieties associated to the polygon $\Delta$ with respect to the lattices $\Z^2$ and $M$. Then,

\begin{enumerate}
\item There is a natural $\mu_r$-equivariant projection $\pi\colon X'\to X$, and $X=X'/\mu_r$;

\item The action of $\mu_r$ on the one-dimensional orbits of $X'$ is free;

\item $\pi^*\Li\cong(\Li')^{\otimes r}$ and $\pi_*|\Li'|=|\Li|$.
\end{enumerate}
\end{lem}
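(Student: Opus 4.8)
The plan is to set up the two toric varieties via their fans and exploit the fact that, with respect to the lattice $M$, the polygon $\Delta$ is the \emph{same} polygon, so the dual fans $\Sigma$ and $\Sigma'$ have the same underlying cones in $\R^2$; only the ambient lattice changes. First I would recall that a lattice inclusion $N\hookrightarrow\Z^2$ of finite index $r$ with the fan $\Sigma$ regarded in both lattices induces a toric morphism $\pi\colon X'\to X$ (functoriality of the toric construction), which on the open orbits is the isogeny $N\otimes\C^*\to\Z^2\otimes\C^*$ of kernel $\mu_r$. Since $\Sigma$, viewed in $N$ and in $\Z^2$, has literally the same maximal cones, $\pi$ is finite of degree $r=[\Z^2:N]=[\Z^2:M]$; here one must be a little careful about which lattice carries which fan. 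The surface $X'$ attached to the polygon $\Delta$ with respect to $M$ is the toric variety of the normal fan of $\Delta$ computed inside $N=r\operatorname{Hom}(M,\Z)$ (the monomial lattice of $X'$ is $M$, its cocharacter lattice is $N$), so the covering $N\hookrightarrow\Z^2$ is exactly the one governing $\pi$. The group $\mu_r=\ker(N\otimes\C^*\to\Z^2\otimes\C^*)$ acts on $X'$ extending its translation action on the open orbit, and standard toric quotient theory (the GIT/quotient description of toric morphisms associated to sublattices, e.g.\ in \cite{Dan78}) gives $X=X'/\mu_r$, proving (1).

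For (2), a one-dimensional orbit $O_\rho\subset X'$ corresponds to a ray $\rho$ of $\Sigma$, and as a torus it is $N(\rho)\otimes\C^*$ where $N(\rho):=N/(N\cap\R\rho)$. The restriction of $\pi$ to $O_\rho$ is the isogeny induced by $N(\rho)\hookrightarrow\Z^2/(\Z^2\cap\R\rho)=:\Z^2(\rho)$, whose kernel is the obstruction to freeness. The key point is that $\rho$ is spanned by a primitive vector $n_i\in N$ that is also primitive in $\Z^2$ — this is precisely the hypothesis that $N$ contains all the $n_i$'s — so $N\cap\R\rho=\Z n_i=\Z^2\cap\R\rho$, and the map $N(\rho)\to\Z^2(\rho)$ is again injective with the quotient $\Z^2(\rho)/N(\rho)\cong\Z^2/N$ acting by translations, which is a free action of a group (abstractly $\mu_r$) on the torus $O_\rho$. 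Hence the $\mu_r$-action on every one-dimensional orbit is free. (One should also note $\mu_r$ acts freely on the two-dimensional orbit trivially, and the only fixed points lie on the zero-dimensional orbits, consistent with the freeness statement being restricted to orbits of dimension $\geq 1$.)

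For (3), the line bundle $\Li$ on $X$ is the one whose polytope is $\Delta$ with respect to $\Z^2$, with sections spanned by $x^m$, $m\in\Delta\cap\Z^2$; similarly $\Li'$ on $X'$ has polytope $\Delta$ with respect to $M$, sections spanned by $x^m$, $m\in\Delta\cap M$. Pulling back, $\pi^*x^m$ is the monomial $m$ viewed on $X'$; since $r\Delta$ is the polytope of $(\Li')^{\otimes r}$ and $r\cdot(\Delta\cap\Z^2)\subseteq\Delta\cap M$ need not hold directly, the clean way is to compare Cartier data: $\Li$ is given by support functions on $\Sigma$ with values in $\operatorname{Hom}(N,\Z)$-valued linear forms, and $\pi^*$ multiplies the relevant lattice indices by $r$ because $\operatorname{Hom}(\Z^2,\Z)\subset\operatorname{Hom}(N,\Z)=\frac1rM$ has index $r$; this yields $\pi^*\Li\cong(\Li')^{\otimes r}$ on the nose. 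For the space of sections, $\pi_*\OO_{X'}=\bigoplus_{\chi}\OO_X$ decomposes into $\mu_r$-isotypic pieces, and $H^0(X',\pi^*\Li)=H^0(X,\Li\otimes\pi_*\OO_{X'})$ contains $H^0(X,\Li)$ as the $\mu_r$-invariant part; translating a section of $(\Li')^{\otimes r}$ that is a $\mu_r$-eigenvector with trivial character down to $X$ gives exactly $|\Li|=\pi_*|\Li'|$ (the latter equality as subvarieties of $X$, i.e.\ the image of the projectivization of invariant sections).

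I expect the main obstacle to be purely bookkeeping rather than conceptual: keeping straight, at each step, which of $\Z^2$ and $M$ plays the role of the monomial lattice and which of $N$ and $\Z^2$ is the cocharacter lattice, and correspondingly in which lattice the fan $\Sigma$ is being interpreted — it is easy to get the direction of the isogeny or a factor of $r$ backwards. The other delicate point is part (3): the isomorphism $\pi^*\Li\cong(\Li')^{\otimes r}$ requires identifying the right $\mu_r$-linearization so that the isomorphism is $\mu_r$-equivariant, which is what makes the push-forward identity $\pi_*|\Li'|=|\Li|$ come out correctly; I would handle this by working with the explicit toric Cartier/polytope data and the character decomposition of $\pi_*\OO_{X'}$ as above, rather than with abstract descent.
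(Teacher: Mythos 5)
Your argument for parts (1) and (2) is essentially the paper's: functoriality of the toric construction applied to the index-$r$ inclusion $N\hookrightarrow\Z^2$ with the fan $\Sigma$ read in both lattices gives the $\mu_r$-Galois cover $\pi$, and freeness on a one-dimensional orbit $O_\rho$ comes down to the same index computation — the paper does it on the character side, showing $\Z^2\cap\rho^\perp$ has index $r$ in $\frac{1}{r}M\cap\rho^\perp$ so that $\pi|_{O'_\rho}$ has degree $r$, while you do it on the cocharacter side via $N(\rho)\hookrightarrow\Z^2(\rho)$ using that $n_i$ is primitive in $\Z^2$; these are the same computation up to duality. (Your phrase ``the quotient $\Z^2(\rho)/N(\rho)$ acting by translations'' is slightly loose — what acts is $\mu_r$, identified with the kernel of the rank-one isogeny $N(\rho)\otimes\C^*\to\Z^2(\rho)\otimes\C^*$, which has order $r$ because the quotient is $\Z^2/N$ — but the conclusion is correct.) The only genuine divergence is in part (3): the paper computes the intersection numbers $\pi^*(C).D'_\rho$ against the boundary divisors, concludes that $\pi^*\Li$ and $(\Li')^{\otimes r}$ agree in ${\rm NS}(X')$, and then invokes ${\rm NS}(X')\cong\pic(X')$ for toric surfaces; you instead compare the toric Cartier/polytope data directly, identifying the polytope of $\pi^*\Li$ with $\Delta$ inside $\frac1rM={\rm Hom}(N,\Z)$ and matching it with that of $(\Li')^{\otimes r}$. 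Your route is marginally more direct (it produces a canonical isomorphism rather than only an equality of classes), while the paper's is shorter to state given that ${\rm NS}=\pic$ is standard for toric varieties; both are correct, and both leave the final push-forward identity $\pi_*|\Li'|=|\Li|$ at roughly the same level of sketch.
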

The assertions of the lemma are well-known in toric geometry. We only include a sketch of its proof for the convenience of the reader.
\begin{proof}
Since $N=r\cdot {\rm Hom}(M,\Z)$, the variety $X'$ is canonically isomorphic to the toric variety associated to the fan $\Sigma$ with respect to the integral structure given by $N$. By the functoriality of toric varieties, we obtain a natural morphism $\pi\: X'\to X$ compatible with the actions of $N\otimes \C^*$ and $(\C^*)^2$. Furthermore, $\pi\:X'\to X$ is a Galois cover with Galois group $\mu_r$ by, e.g., \cite[\S~2.6.2]{Dan78}.

To verify (2), let $\rho$ be a ray in $\Sigma$, and $O\subset X$, $O'\subset X'$ the corresponding one-dimensional orbits. Then $O={\rm Spec}\left(\C[\Z^2\cap\rho^\perp]\right)$ and $O'={\rm Spec}\left(\C[\frac{1}{r}M\cap\rho^\perp]\right)$. Since $N$ contains the primitive integral vector of $\rho$, it follows that $\Z^2\cap\rho^\perp$ has index $r$ in $\frac{1}{r}M\cap\rho^\perp$, and hence the degree of $\pi_{|_{O'}}\:O'\to O$ is $r$, i.e., $\mu_r$ acts freely on $O'$.

If $[C]\in |\Li|$, then the intersection of $C$ with the divisor $D_\rho\subset X$ corresponding to a ray $\rho$ is given by the integral length of the dual side of $\Delta$. Therefore, $\pi^*(C).D'_\rho$ is $r$ times bigger, and hence coincides with the integral length of $\Delta$ with respect to $\frac{1}{r}M$, or equivalently, the integral length of $r\Delta$ with respect to $M$. We conclude that $\pi^*\Li$ and $(\Li')^{\otimes r}$ belong to the same class in the Neron-Severi group ${\rm NS}(X')$. But ${\rm NS}(X')\cong{\rm Pic}(X')$, and therefore $\pi^*\Li\cong(\Li')^{\otimes r}$. The second part of assertion (3) is now clear.
\end{proof}

\begin{lem}\label{lem:propofSev}
Let $(X,\Li)$ and $(X',\Li')$ be as in Lemma~\ref{lem:gentoricstuff}. Denote by $V^\irr_{g,\Li'}$ the Severi variety of integral genus $g$ curves in the linear system $|\Li'|$ on $X'$, and let $V'\subseteq V^\irr_{g,\Li'}$ be an irreducible component. Denote by $V$ to the locus of reduced curves in $\pi_*(V')$. Then,
\begin{enumerate}
\item $\dim(V^\irr_{g,\Li'})=\dim(V^\irr_{g,\Li})$;

\item $V\subseteq V^\irr_{g,\Li}$ is an irreducible component;

\item If $\Delta=\Delta_{k,k'}$ is a kite, then $\kappa(V')=\kappa(V)$ if $r$ is odd, and $\kappa(V)=g+1$ if $r$ is even;

\item $(N^\trop(V),M^\trop(V))=(N^\trop(V'),M^\trop(V'))$.
\end{enumerate}
\end{lem}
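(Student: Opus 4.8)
The plan is to analyze the relationship between $V'$ and $V$ via the étale cover $\pi\colon X'\to X$, treating the four assertions in the order $(1)$, then $(2)$, then $(4)$, and finally $(3)$, since the signature statement is the most delicate and benefits from having the lattice comparison in hand.

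For $(1)$, I would note that by \eqref{eq:dimsev} the dimension of any component of a Severi variety depends only on the polygon and the genus, not on the lattice: both $V^\irr_{g,\Li}$ and $V^\irr_{g,\Li'}$ are cut out by the polygon $\Delta$ (with respect to $\Z^2$ and $M$, respectively), so the boundary lattice points $|\partial\Delta\cap\Z^2|$ are the same by Lemma~\ref{lem:affsublat} (the edges of $\Delta$ have the same primitive integral vectors in $M$ and in $\Z^2$), and $g$ is the same. Hence both dimensions equal $|\partial\Delta\cap\Z^2|+g-1$. For $(2)$, the key point is that $\pi$ restricts to an \'etale $\mu_r$-cover over $\Xb$ (Lemma~\ref{lem:gentoricstuff}(2) plus freeness on the open orbit), so a general torically transverse curve $[C']\in V'$ pushes forward to a torically transverse curve $C=\pi(C')$, and $C$ is again integral of geometric genus $g$: indeed $\tilde C'\to\tilde C$ has degree $1$ because $C'$ meets the boundary transversally and the cover is \'etale away from the boundary, so $C'$ is birational onto $C$, forcing $C$ to have the same geometric genus. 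Thus $\pi_*$ sends $V'$ into $V^\irr_{g,\Li}$; since $\pi_*\colon |\Li'|\to|\Li|$ is finite (indeed the projection induced by $H^0(X',(\Li')^{\otimes r})\cong H^0(X,\Li)$, using Lemma~\ref{lem:gentoricstuff}(3)), the image $V$ is an irreducible closed subvariety of the expected dimension $\dim V'=\dim V^\irr_{g,\Li'}=\dim V^\irr_{g,\Li}$ by $(1)$, hence a whole irreducible component.

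For $(4)$, I would compare tropicalizations. A curve $C'$ defined over $K$ tropicalizes to $h'\colon\Gamma\to\R^2$ computed with respect to the lattice $M$ (monomials) and its dual $N$; the pushforward $C=\pi_*(C')$ has the same normalization and the same map to the torus, only the target torus is quotiented, so the underlying tropical curve has the same combinatorial type and the same slopes, now read in $\Z^2\supseteq N$. Concretely, $N_{C'}^\trop=N(\Gamma)\subseteq N$ and $N_C^\trop$ is the image of the same set of slopes in $\Z^2$; since the slopes already lie in $N$, these two sublattices coincide, and dually for $M$. Running over the dense loci $B'\subseteq V'$ and $B\subseteq V$ of torically transverse curves — which correspond to each other under $\pi_*$ — and summing gives $(N^\trop(V),M^\trop(V))=(N^\trop(V'),M^\trop(V'))$.

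The main obstacle is $(3)$, the behaviour of the signature under the quotient. Here I would work with the explicit equation: for a kite, a curve $C'$ on $X'$ is defined by a Laurent polynomial which, in coordinates adapted to $N$, has the same kite shape, so its nodal $z$-coordinates satisfy $z^2=a/b$ as in \S\ref{sec:nodalpart}. Under $\pi$, the coordinate $z$ on $\Xb$ is a power $z=(z')^r$ (up to the torus part) of the coordinate $z'$ on $X'^\bullet$, because the index of $\Z^2\cap\rho^\perp$ in $\tfrac1r M\cap\rho^\perp$ is $r$. If $r$ is odd, then $z\mapsto z'$ is a bijection on the relevant fibers and the partition of the nodes of $C'$ by $z'$-value maps isomorphically to the partition of the nodes of $C$ by $z$-value; hence $\kappa(V')=\kappa(V)$. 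If $r$ is even, then $z=(z')^r$ and the two values $\pm\sqrt{a/b}$ of $z'$ for $C'$ get identified downstairs (as do all $r$-th roots), so the two blocks of nodes of $C$ merge into... no — rather, the nodes of $C$ split according to $z$, and since $r$ is even the equation $z^2=a/b$ upstairs becomes a relation forcing all the "extra" structure to collapse, so that $C$ sees only one effective block and $\delta_2=0$; combined with $\delta_1-\delta_2=\kappa$ and $\delta_1+\delta_2=\delta$ one gets $\kappa(V)=\delta$, and one checks $\delta$ equals $g+1$ precisely in this even-index kite situation from $\delta=\delta_{\Z^2}(\Delta,g)=|\Delta^\circ\cap\Z^2|-g$ together with the arithmetic of Remark~\ref{rem:kitearith}. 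Making this dichotomy precise — in particular identifying which block survives and pinning down the value $g+1$ — is the step that will require the most care, and I would likely isolate it as a short lemma about how the $z^2=a/b$ locus behaves under $z\mapsto z^r$.
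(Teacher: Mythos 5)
Your treatment of parts (1), (2) and (4) follows essentially the same route as the paper (dimension formula plus $|\partial\Delta\cap\Z^2|=|\partial\Delta\cap M|$; finiteness of $\pi_*$ plus birationality of $C'\to C$; identification of the tropicalizations of $C'$ and $\pi(C')$). Two small points there: to conclude that $V$ \emph{is} a component rather than merely dense in one, the paper adds the converse argument that for any $[C]$ in that component the reduced preimage $\pi^{-1}(C)$ is a $\mu_r$-orbit of some $[C']\in V'$; and in (4) the claim that the torically transverse loci ``correspond to each other under $\pi_*$'' hides the case of a transverse $C'$ whose pushforward is degenerate, which the paper handles by a one-parameter deformation together with the constancy of tropicalization.

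The genuine gap is in part (3). You treat the nodes of $C=\pi(C')$ as if they were exactly the images of the nodes of $C'$, so that the nodal partition simply transports along $z\mapsto z^r$. But $C$ has $\delta_{\Z^2}(\Delta,g)=k+k'-1-g$ nodes while $C'$ has only $\delta_M(\Delta,g)=\tfrac{k+k'}{r}-1-g$; the missing $\tfrac{(r-1)(k+k')}{r}$ nodes of $C$ are the images of the intersection points $C'\cap\xi(C')$ for $1\ne\xi\in\mu_r$, and the entire content of (3) is the computation of how \emph{these} distribute between the two blocks. For $r$ odd one must show that each $C'\cap\xi(C')$ contributes equally to the two blocks (this uses $\xi\ne\xi^{-1}$ and the fact that the $r$-th powers of the two roots of $cz^{-1}=e\xi z$ differ by a sign), after which $\kappa(V)=\kappa(V')$ follows. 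For $r$ even your conclusion $\kappa(V)=\delta$ is incorrect: the nodes of $C'$ all land in one block, but the intersection nodes land in the block determined by $\xi^{r/2}=\pm1$, and the resulting count is
\[
\kappa(V)=\Bigl|\tfrac{k+k'}{r}\sum_{1\ne\xi\in\mu_r}\xi^{r/2}+\delta_M(\Delta,g)\Bigr|=\bigl|-\tfrac{k+k'}{r}+\tfrac{k+k'}{r}-1-g\bigr|=g+1,
\]
which is not $\delta$ in general (indeed $\kappa\le\delta$ forces nothing like $\kappa=\delta$ here). There is also a second case you do not address, namely when $(0,k)\notin M$, so that $\Delta$ is not a kite with respect to $M$; there $r$ is automatically even and one must redo the computation after an integral affine change of coordinates.
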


\begin{proof}
By \cite[Theorem~1]{KST13}, the Severi varieties $V^\irr_{g,\Li}$ and $V^\irr_{g,\Li'}$ are equidimensional, and
$$\dim(V^\irr_{g,\Li})=|\partial\Delta\cap \Z^2|+g-1=|\partial\Delta\cap M|+g-1=\dim(V^\irr_{g,\Li'}),$$
since $|\partial\Delta\cap \Z^2|+g-1=|\Li|-\delta_{\Z^2}(\Delta,g)$ and $|\partial\Delta\cap M|+g-1=|\Li'|-\delta_{M}(\Delta,g)$. This proves (1).

To prove (2), notice that since $\pi_*\:|\Li'|\to|\Li|$ is a finite morphism, we have the equality of dimensions $\dim(\pi_*(V'))=\dim(V^\irr_{g,\Li})$. Thus, $V$ is dense in an irreducible component of $V^\irr_{g,\Li}$. On the other hand, if $[C]$ belongs to this component, then $\pi^{-1}(C)$ is a reduced curve in the linear system $|(\Li')^{\otimes r}|$. Furthermore, $\pi^{-1}(C)$ is a specialization of a $\mu_r$-orbit of an element of $|\Li'|$, and hence it is a $\mu_r$-orbit of some $[C']\in |\Li'|$. Plainly, $\pi\:C'\to C$ is a birational map, and hence $[C']\in V'$. Assertion (2) now follows.

The proof of (3) is a rather long but straight-forward computation. We start with the case when $M$ contains the point $(0,k)$, i.e., $\Delta$ is a kite also with respect to the sublattice $M$. In this case, the action of $\mu_r$ on a point $(z,w)\in \Xb$ is given by $\xi(z,w)=(\xi z,w)$, and the projection $\pi$ is given by $\pi(z,w)=(z^r,w)$.

Let $[C']\in V'$ be general, and set $C:=\pi(C')$. Then $[C]\in V$, and hence the curve $C$ is nodal, torically transverse, and has geometric genus $g$. Since $\pi^{-1}(C)=\cup_{\xi\in\mu_r}\xi(C')$, each node of $C$ has $r$ preimages, which are either the $\mu_r$-orbit of a node of $C'$, or the $\mu_r$-orbit of a point of intersection of $C'$ with $\xi(C')$ for some $1\ne \xi\in\mu_r$. Recall that two nodes belong to the same block of the nodal partition if and only if they have the same $z$-coordinates, and they belong to different blocks if and only if their $z$-coordinates differ by a sign. Therefore, if $r$ is odd, then a pair of nodes of $C'$ belongs to the same block of the nodal partition if and only if their images do so; and if $r$ is even, then all nodes of $C'$ are mapped to the same block of the nodal partition of $C$.

Let $p\in C$ be a node, whose preimage is a pair of points $\{p'_1,p'_2\}\subset C'$. Then there exists a unique $\xi\in\mu_r$ such that $p'_1\in C'\cap\xi(C')$, and hence $p'_2\in C'\cap \xi^{-1}(C')$. Let $cz^{-1}w^{k/r}+q(w)+ezw^{k/r}$ be a section of $\Li'$ defining $C'$. Then the intersection $C'\cap\xi(C')$ is given by the system of equations:
$$cz^{-1}w^{k/r}+q(w)+ezw^{k/r}=c\xi^{-1}z^{-1}w^{k/r}+q(w)+e\xi zw^{k/r}=0,$$ or equivalently $cz^{-1}w^{k/r}+q(w)+ezw^{k/r}=cz^{-1}-e\xi z=0$. Since $C=\pi(C')$ is nodal, the intersection $C'\cap\xi(C')$ is transverse. Thus, for each solution of $cz^{-1}=e\xi z$, we have $\frac{k+k'}{r}$ solutions of $cz^{-1}w^{k/r}+q(w)+ezw^{k/r}=0$.

Assume that $r$ is odd. To prove that $\kappa(V')=\kappa(V)$, it remains to show that each block of the nodal partition of $C$ contains the same amount of nodes, whose preimages are pairs of points in $C'$. Since $r$ is odd, $\xi\ne\xi^{-1}$ for any $1\ne\xi\in\mu_r$. Therefore, it suffices to show that for any $\xi\ne 1$, the points of intersection $C'\cap\xi(C')$ contribute equally to the two blocks of the nodal partition of $C$, but the latter is clear because the $r$-th powers of the two solutions of $cz^{-1}=e\xi z$ differ by a sign, and therefore each one of them contributes $\frac{k+k'}{r}$ nodes to the corresponding block.

Assume now that $r$ is even. In this case, all nodes of $C'$ are mapped to the block of nodes of $C$ for which $z=\left(\frac{c}{e}\right)^{r/2}$. Similarly, for any $\xi\ne 1$, all intersection points of $C'\cap\xi(C')$ are mapped to the block in which $z=\left(\frac{c}{e}\xi\right)^{r/2}$. Thus,
$$\kappa(V)=\left|\frac{k+k'}{r}\sum_{1\ne\xi\in\mu_r}\xi^{r/2}+\delta_M(\Delta,g)\right|=\left|\frac{k+k'}{r}\sum_{\xi\in\mu_r}\xi^{r/2}-1-g\right|=g+1,$$
since $|C'\cap \xi(C')|=2\frac{k+k'}{r}$ for all $\xi\ne 1$.

It remains to prove assertion (3) in the case when $M$ does not contain the point $(0,k)$. By Remark~\ref{rem:kitearith}, $r$ is a common divisor of $k+k'$ and $k-k'$. In particular, $r$ is a divisor of $2k$ but it does not divide $k$ since otherwise $(0,k)\in M$. Therefore, $r$ is necessarily even. For the computation, it is more convenient to apply an affine automorphism of $\Z^2$ so that $\Delta$ becomes the polygon with vertices $(0,0),(-1,0),(1,2k),(0,k+k')$. Then $\mu_r$ again acts by $\xi(z,w)=(\xi z,w)$ and $\pi(z,w)=(z^r,w)$.

Let $az^{-1}+p(w)+bzw^{2k}$ be a section of $\Li$ defining the curve $C$. Then the nodes of $C$ satisfy $-az^{-2}+bw^{2k}=0$, and hence two nodes of $C$ belong to the same block if and only if they have the same value of $zw^{k}$. Similarly, if $cz^{-1}+q(w)+ezw^{2k/r}$ is a section of $\Li'$ defining the curve $C'\subset X'$, then the nodes of $C'$ satisfy $z^2w^{2k/r}=\frac{c}{e}$. Thus, the image $(z^r,w)$ of every node of $C'$ satisfies $z^rw^{k}=\left(\frac{c}{e}\right)^{r/2}$, and hence belongs to the same block of the nodal partition of $C$. Other nodes of $C$ correspond to the points of intersection of $C'$ with $\xi(C')$ for various $1\ne \xi\in\mu_r$, and the computation, identical to the one we did above, shows that $\kappa(V)=g+1$.

Finally, let us prove (4). Let $K$ be the field of Puiseux series, and $[C']\in V_{g,\Li'}^\irr(K)$ be such that $\pi_*[C']\in V_{g,\Li}^\irr(K)$. Set $C:=\pi(C')$. Then $C'\to C$ is birational, and hence the normalizations $\widetilde{C}'$ and $\widetilde{C}$ are canonically isomorphic. Let us show that the tropicalizations of $f'\:\widetilde{C}'\to X'$ and $f\:\widetilde{C}\to X$ coincide. Under the identification $\widetilde{C}'=\widetilde{C}$, we have $f=\pi\circ f'$. Thus, the abstract tropical curves $\Gamma'$ and $\Gamma$ coincide, and for any $m\in\Z^2$, we have $f^*(x^m)=(f')^*(x^m)$. Therefore, the parametrizations $h'$ and $h$ coincide too by their very definition.  We conclude that $N^\trop_C=N^\trop_{C'}$.

We have seen in the proof of assertion (2) above, that for any $[C]\in V(K)$ there exists $[C']\in V'(K)$ such that $\pi(C')=C$. Thus, $N^\trop_C=N^\trop_{C'}$, and hence $N^\trop(V')\subseteq N^\trop(V)$. Vice versa, for $[C']\in V'(K)$, consider a general one-parameter deformation $[C'_t]$ in $V'(K)$. Then, for any $t$ in a small punctured neighborhood of $0$, the pushforward $\pi_*([C'_t])$ belongs to $V(K)$, and hence $N^\trop(V)\subseteq N^\trop_{C'_t}$. On the other hand, in any punctured neighborhood of $0$, there exist $t$ such that the tropicalization of $C'_t$ coincides with that of $C'$; see, e.g., \cite[Theorem~4.6]{CHT20}. Thus, $N^\trop(V)\subseteq N^\trop_{C'}$, and hence $N^\trop(V)\subseteq N^\trop(V')$. It follows now that $N^\trop(V')=N^\trop(V)$, and hence also $M^\trop(V')=M^\trop(V)$, as asserted.
\end{proof}

\subsection{Proof of Theorem~\ref{thm:general}}
Without loss of generality we may assume that $\Delta$ contains the origin as one of its vertices, and the sublattices we are interested in are linear. Let $M\subseteq\Z^2$ be a sublattice satisfying the conditions (a) and (b) of the theorem. It is sufficient to prove that for any $1\le g'\le |\Delta^\circ\cap M|$, there exists a convex $M$-integral triangulation $\Delta=\cup\Delta_i$, whose set of vertices  contains $\partial\Delta\cap M$ and generates $M$, and the number of vertices of the triangulation in $\Delta^\circ$ is $g'$. Indeed, given such a triangulation for $g'=g$, the dual tropical curve $h\:\Gamma\to\R^2$ is a trivalent irreducible curve of genus $g$, and hence regular by \cite[Proposition~2.23]{Mikh05}. Let $(X',\Li')$ be the polarized toric surface corresponding to the polygon $\Delta$ with respect to the lattice $M$. Then $h\:\Gamma\to\R^2$ is realizable by an irreducible nodal algebraic curve $f\:C\to X'$ of genus $g$ over the field of Puiseux series; see, e.g., \cite[Lemma~3.12]{Shu05} or \cite[Theorem~6.2]{Tyo12}. Let $V'$ be the component of the Severi variety $V_{g,\Li'}^\irr$ containing $[C]$, and $V:=\pi_*(V')$ the corresponding component of $V_{g,\Li}^\irr$. Then $M^\trop(V)=M^\trop(V')=M$ by Lemma~\ref{lem:propofSev} (4), which implies the asserted bound on the number of irreducible components of $V_{g,\Li}^\irr$.

Let us construct the desired $M$-triangulations of $\Delta$. Since $\partial\Delta\cap M=\partial\Delta\cap \Z^2$, there exists $m\in \Delta^\circ\cap M$ such that $\{m\}\cup (\partial\Delta\cap M)$ generates $M$. Thus, the triangulation with vertices $\{m\}\cup (\partial\Delta\cap M)$ is the desired convex triangulation in the case $g'=1$. We proceed by induction. Once a desired triangulation $\Delta=\cup\Delta_i$ is constructed for $1\le g'< |\Delta^\circ\cap M|$, we construct the triangulation for $g'+1$ in the following way: if there is a triangle $\Delta_i$ containing a lattice point in its interior, then we add this point as a new vertex of the triangulation. Otherwise there is a pair of triangles containing a lattice point in the interior of their common edge, and we add this point as a new vertex of the triangulation. Plainly, the new triangulation is convex in both cases, and the number of vertices of the new triangulation in the interior of $\Delta$ is $g'+1$.\qed

\subsection{Proof of Theorem~\ref{thm:kite}}\label{sec:proofkite}
Let $M\subseteq\Z^2$ be a sublattice of index $r$. A pair $(M,\kappa)$ is called \emph{admissible} if and only if $M$ satisfies conditions (a) and (b) of Theorem~\ref{thm:general}, and one of the following conditions holds
\begin{itemize}
\item the index $r$ is even and $\kappa=g+1$, or
\item the index $r$ is odd, $0\le\kappa\le \min\{\delta_M(\Delta,g), g\}$, and $\kappa$ is congruent to $\delta_M(\Delta,g)$ modulo $2$.
\end{itemize}
To prove the theorem, it is sufficient to construct for any admissible pair $(M,\kappa)$, an irreducible component $V\subseteq V_{g,\Li}^\irr$ such that $(M^\trop(V),\kappa(V))=(M,\kappa)$. Pick an admissible pair $(M,\kappa)$, and let $(X',\Li')$ be the polarized toric surface associated to the polygon $\Delta$ with respect to the lattice $M$.

Assume first, that $r$ is even. Then $\kappa=g+1$. As in the proof of Theorem~\ref{thm:general}, there exists an irreducible component $V'\subseteq V_{g,\Li'}^\irr$ such that $M^\trop(V')=M$. Set $V:=\pi_*(V')$. Then $M^\trop(V)=M$ and $\kappa(V)=g+1=\kappa$ by Lemma~\ref{lem:propofSev} (3)-(4).

Assume now that $r$ is odd. Then $\kappa\le \min\{\delta_M(\Delta,g), g\}$, and hence $$\kappa+g\le \delta_M(\Delta,g)+g=|\Delta^\circ\cap M|=\frac{k+k'}{r}-1.$$ Consider the $M$-integral triangulation of $\Delta=\Delta_{k,k'}$, whose inner vertices are the points
$$k'-2r,k'-4r,\dotsc,k'-2\kappa r,k'-2\kappa r-r,k'-2\kappa r-2r,\dotsc, k'-2\kappa r-(g-\kappa)r$$
on the $y$-axis. Notice that $k'-2\kappa r-(g-\kappa)r=k'-(\kappa+g)r\geqslant k'-(k+k'-r)=r-k$, and therefore such triangulation exists. The triangulation subdivides the interval $[-k,k']$ on the $y$-axis into $g+1$ subintervals. The top $\kappa$ of them have $M$-integral length $2$, the next $g-\kappa$ intervals have $M$-integral length one, and the bottom interval has $M$-integral length
$$\frac{k'-2\kappa r-(g-\kappa)r+k}{r}=\delta_M(\Delta,g)+g+1-(\kappa+g)=\delta_M(\Delta,g)-\kappa+1,$$ which is odd since $\kappa$ is congruent to $\delta_M(\Delta,g)$ modulo $2$. In particular, the vertices of the subdivision generate the affine lattice $M$.

Let $h\:\Gamma\to\R^2$ be the trivalent tropical curve dual to the triangulation constructed above. We will prove that it is liftable to an algebraic curve $[C']\in V_{g,\Li'}^\irr$ for which $\kappa(C')=\kappa$ by using Shustin's refined tropicalization \cite[\S~3.5]{Shu05}. Let us start by picking an arbitrary tropical datum as in \cite[\S~3.7]{Shu05} with nodal amoeba $h(\Gamma)$. Then, we can apply \cite[Theorem~5]{Shu05} to lift it to an algebraic curve $C'$ over the field $K$. Furthermore, since the triangles in the dual subdivision have no inner integral points, the  nodes of $C'$ are in natural one-to-one correspondence with the nodes of the $z$-refinements of the tropicalization of $C'$. This allows us to control the signature $\kappa(C')$ in the following way.

For each of the top $\kappa$ intervals on the $y$-axis, its $M$-integral length is $2$, and hence there are two possible $z$-refinements as in \cite[Lemma~3.9]{Shu05}, and each of them contributes a single node to $C'$. The next $g-\kappa$ intervals have $M$-integral length one, and hence contribute no nodes at all. Finally, the bottom interval has an odd integral length, and hence contributes an even number of nodes. Furthermore, the nodes of the $z$-refinement corresponding to the bottom interval contribute equally to the two blocks of nodes of  $C'$. Indeed, if $h$ is the Chebyshev polynomial of an odd degree (or any of its twists as in the proof of \cite[Lemma~3.9]{Shu05}), then it has the same number of critical points with critical values $1$ and $-1$, and $\pm 1$ are the only critical values of $h$. Therefore, half of the nodes of the bottom $z$-refinement, which is given by the polynomial $az^2+bzh(w)+c$, have one value of the $z$-coordinate, and half -- the opposite value.

Notice however, that changing the choice of the $z$-refinement corresponding to one of the top $\kappa$ intervals, changes the block to which the refining curve contributes its node. Thus, after replacing some of the first $\kappa$ refining curves with their twists, we can make sure that {\em all} nodes of the first $\kappa$ refining curves contribute to {\em the same} block of nodes of $C'$. Therefore, by \cite[Lemma~3.12]{Shu05}, there exists  $[C']\in V_{g,\Li'}^\irr$ such that $\kappa(C')=\kappa$. Furthermore, since the vertices of the triangulation generate $M$, we also have: $M^\trop(C')=M$.

To finish the proof, let $V'\subseteq V_{g,\Li'}^\irr$ be the irreducible component containing $[C']$, and set, as usual, $V:=\pi_*(V')$. Then $V\subseteq V_{g,\Li}^\irr$ is an irreducible component, and since $r$ is odd, we have
$$(M^\trop(V),\kappa(V))=(M^\trop(V'),\kappa(V'))=(M,\kappa)$$
by Lemma~\ref{lem:propofSev} (3)-(4). This completes the proof. \qed

\section{Deformation of curves in toric surfaces}\label{sec:def}

The main goal of this section is to introduce local coordinates on $V_{g,\Li}^\irr$ that we can use to construct deformations to curves of lower genus. In particular, we will prove Theorem \ref{thm:integrals}.

For the convenience of the reader, we recollect the relevant material introduced in Section \ref{sec:mainres}. Fix $[C] \in V_{g,\Li}^\irr$, and pick a collection $\Upsilon:=\{\gamma_1,\dotsc,\gamma_g\}$ of  oriented simple closed curves in the smooth locus of $\Cb$ such that each $\gamma_j$ is contractible in $\Xb$ and such that $\Upsilon$ generates a $g$-dimensional subspace of $H_1(C,\Z)$. Since $\Xb$ is acyclic, each $\gamma_j$ bounds a smooth disc $M_j\subset \Xb$ that is unique up to homotopy. Denoting $\Omega:= \frac{dz\wedge dw}{zw}$ the holomorphic $2$-form on $\Xb\simeq \ttor$, we can define the integrals
\begin{equation}\label{eq:integrals1}
\int_{M_j} \Omega \; , \quad j=1,\dotsc,g.
\end{equation}
Since the $2$-form $\Omega$ is closed, the above integrals do not depend on the choice of the smooth membranes $M_j$'s. Additionally, the integrals do not depend on the choice of $\gamma_j$'s within a fixed isotopy class, as the form $\Omega$ is identically zero on $C$.

Denote by $W\subset V_{g,\Li}^\irr$ the locus of curves having given tangency profile with the boundary divisor. For any  $[C] \in W$ and any open neighborhood $U\subset W$ of $[C]$ such that the monodromy action of $\pi_1(U,C)$ on $C$ acts as the identity on $\Upsilon$,
we can carry the collection $\Upsilon$ to a collection $\Upsilon'$ of simple closed curves in $[C']\in U$. Therefore, the integrals \eqref{eq:integrals1} define a map
\[\Phi_\Upsilon\: U \rightarrow \C^g.\]
At last, denote by $U^{C}\subset U$ the subset of curves $[C']$ such that $C'\cap \partial X= C\cap \partial X$. The following statement is a slight generalization of Theorem \ref{thm:integrals}.

\begin{thm}\label{thm:integrals1}
For $[C]\in U$ and $\Upsilon$ as above, the restriction of the map $\Phi_\Upsilon$ to $U^{C}$ is a local diffeomorphism. Moreover, the integrals \eqref{eq:integrals1} together with the coordinates of the intersection points of $C$ with the boundary divisor of $X$ provide a local system of coordinates on $W$.
\end{thm}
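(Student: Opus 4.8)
The plan is to establish that $\Phi_\Upsilon|_{U^C}$ is a local diffeomorphism by a dimension count combined with injectivity of the differential, and then upgrade to the full statement on $W$ by adjoining the boundary intersection coordinates. First I would compute $\dim U^C$: since $W$ is cut out in $V_{g,\Li}^\irr$ by fixing the tangency profile, and $U^C$ further fixes the actual intersection points with $\partial X$, the codimension of $U^C$ in $W$ equals the number of intersection points (counted appropriately), so by the dimension formula \eqref{eq:dimsev} one gets $\dim U^C=g$. This matches the target $\C^g$ of $\Phi_\Upsilon$, so it suffices to prove that $d\Phi_\Upsilon$ is injective at $[C]$.

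The heart of the argument is the identification of the differential of the period map with a natural pairing on $H^0(\tCb,\omega_{\tCb})$. An infinitesimal deformation of $C$ inside $U^C$ (i.e. keeping the boundary behaviour fixed) corresponds to a section of the equisingular normal sheaf, and pairing this section against $\Omega$ produces, by a residue/adjunction computation, an element of $H^0(\tCb,\omega_{\tCb})$, the space of holomorphic differentials on the genus-$g$ normalization. Concretely, differentiating $\int_{M_{j,t}}\Omega$ in $t$ and using Stokes together with the vanishing of $\Omega$ on $C$ yields $\frac{d}{dt}\big|_{0}\int_{M_{j,t}}\Omega = \int_{\gamma_j}\eta$, where $\eta\in H^0(\tCb,\omega_{\tCb})$ is the holomorphic one-form determined by the deformation. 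Thus $d\Phi_\Upsilon$ factors as the composition of the linear map (deformation $\mapsto \eta$) with the period pairing $\eta\mapsto\big(\int_{\gamma_j}\eta\big)_{j=1}^g$. Since $\Upsilon$ generates a $g$-dimensional subspace of $H_1(C,\Z)$ — and hence, pulling back to $\tCb$, a $g$-dimensional subspace of $H_1(\tCb,\Z)$, which by the nondegeneracy of the period pairing on a smooth genus-$g$ curve separates all of $H^0(\tCb,\omega_{\tCb})$ — the second map is an isomorphism. It then remains to check that the first map (deformations in $U^C$ to holomorphic differentials) is injective; this is where I expect the main work to lie.

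For that injectivity I would argue as follows. The tangent space to $U^C$ at $[C]$ is $H^0(\tCb,N)$, where $N$ is the pullback of the normal sheaf twisted down by the points over $\partial X$ (to account for fixing the intersection points), and the pairing with $\Omega$ sends a section $s$ to the differential $\iota_s\Omega$ restricted to $C$, which is holomorphic on $\tCb$ precisely because $\Omega$ has logarithmic poles along $\partial X$ and $s$ vanishes there. If $\iota_s\Omega\equiv 0$ on $C$, then $s$ is tangent to $C$ at every point, i.e. $s$ comes from an automorphism-type vector field; but a section of the normal sheaf that is everywhere tangent to $C$ must be zero (the normal sheaf quotient kills tangential directions), giving $s=0$. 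Hence $d\Phi_\Upsilon$ is injective, and by the dimension count it is an isomorphism, so $\Phi_\Upsilon|_{U^C}$ is a local diffeomorphism.

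Finally, to get the statement about coordinates on all of $W$: the intersection points of $C$ with $\partial X$ vary holomorphically over $W$, and fixing them cuts out exactly $U^C$ inside $U$; since we have just shown $\Phi_\Upsilon$ restricts to a local diffeomorphism on each such slice, and the slices $U^C$ foliate $U$ with the boundary-point coordinates as transverse parameters, the combined map (periods, boundary coordinates) is a local diffeomorphism from $U$ onto an open subset of $\C^g\times(\text{boundary configuration space})$. This yields the desired local system of coordinates on $W$, and in particular proves Theorem~\ref{thm:integrals1} and hence Theorem~\ref{thm:integrals}. The main obstacle, as indicated, is the careful bookkeeping in the residue computation identifying $d\Phi_\Upsilon$ with the period pairing and verifying that the relevant deformation-to-differential map is injective with the correct target $H^0(\tCb,\omega_{\tCb})$; the rest is dimension counting and the classical nondegeneracy of periods on smooth curves.
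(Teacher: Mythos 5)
Your proposal follows essentially the same route as the paper: the differential of $\Phi_\Upsilon$ is computed by flowing the membranes along a vector field realizing the deformation, applying the Cartan formula and Stokes to reduce to the periods $\int_{\gamma_j}$ of the holomorphic $1$-form on $\tCb$ obtained by contracting $\Omega$ with the normal direction of the deformation (this is exactly the content of Lemma~\ref{lem:1form}, where that form is written explicitly as $\xi_h=\pi^*\bigl(\tfrac{h}{zw\,\partial_wf}\,dz\bigr)$ for an adjoint polynomial $h$), and injectivity then follows from the nondegeneracy of these periods together with the dimension count $\dim U^C=g$. The only substantive item you elide is the explicit construction of a smooth time-dependent vector field whose flow carries $C_0$ to $C_t$ near each membrane, to which the paper devotes its final paragraph; otherwise the arguments match.
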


In order to prove Theorem \ref{thm:integrals1}, we will need a description of the space of holomorphic $1$-forms on the normalization of $C$.  We denote by $\pi\: \widetilde{C} \rightarrow C$ the normalization map. Recall that the \emph{adjoint polygon} $\Delta_a$ is defined by $\Delta_a :=\conv(\Delta^\circ)$.

\begin{lem}\label{lem:1form}
For any curve $[C]\in\sv{g,\Li}^{\irr}$ whose set of nodes is contained in $\Xb$, the space of holomorphic $1$-forms on the normalization $\widetilde{C}$ is isomorphic to the linear subspace $L\subset H^0(X,\Li)$ consisting of the Laurent polynomials $h(z,w)$ vanishing at the nodes of $C$ and having the Newton polygon contained in $\Delta_a$. The isomorphism is given by the map
\[h(z,w) \mapsto \xi_h:= \pi^*\Big(\dfrac{h(z,w)}{\partial_wf(z,w)\cdot zw}dz\Big)\]
where $f\in H^0(X,\Li)$ is a Laurent polynomial defining $C^\bullet$.
\end{lem}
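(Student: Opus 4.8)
\textbf{Proof proposal for Lemma~\ref{lem:1form}.}

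The plan is to identify the space of holomorphic $1$-forms on $\widetilde C$ with sections of the dualizing sheaf of $C$ twisted down by the nodes, and then to make this identification explicit using the toric/adjoint description of the dualizing sheaf. First I would recall that for a nodal curve $C$ on a smooth-enough (at least at the points of $C$) toric surface $X$, the adjunction formula gives $\omega_C\cong\Li\otimes\omega_X|_C\cong\Li\otimes\OO_X(-\partial X)|_C$, and that $H^0(X,\Li\otimes\omega_X)$ is precisely the space of Laurent polynomials with Newton polygon contained in the adjoint polygon $\Delta_a=\conv(\Delta^\circ)$ --- this is the standard fact that the interior lattice points of $\Delta$ index a basis of the space of regular $2$-forms on $X$ with at worst the allowed poles, so $h\,\Omega$ with $\New(h)\subseteq\Delta_a$ runs over $H^0(X,\omega_X\otimes\Li)$. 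The holomorphic $1$-forms on the normalization $\widetilde C$ are exactly the sections of $\omega_C$ that pull back to holomorphic forms, i.e. the sections of the subsheaf $\omega_C(-\sum_i p_i)$ where $p_i$ are the nodes (equivalently, the adjoint/conductor condition: a section of $\omega_C$ descends to a holomorphic form on $\widetilde C$ iff it vanishes at all the nodes). Restricting the global sections $H^0(X,\omega_X\otimes\Li)$ to $C$ and imposing vanishing at the nodes therefore lands inside $H^0(\widetilde C,\omega_{\widetilde C})$; the content of the lemma is that this map is an isomorphism.

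Next I would write the map concretely. If $f\in H^0(X,\Li)$ cuts out $C^\bullet=C\cap\Xb$, then on the smooth torus $\Xb$ a local generator of $\omega_{C^\bullet}$ is obtained by adjunction from $\Omega=\frac{dz\wedge dw}{zw}$: near a point of $C$ where $\partial_w f\neq 0$ one has, by the Poincaré residue, that $\frac{dz}{\partial_w f\cdot zw}$ restricted to $C$ represents $\Omega$, and these patch (using also $-\frac{dw}{\partial_z f\cdot zw}$ where $\partial_z f\neq0$) to a global rational section of $\omega_C$ with the expected poles only along $\partial X$. Hence for $h$ with $\New(h)\subseteq\Delta_a$, the form $\xi_h:=\pi^*\!\big(\frac{h(z,w)}{\partial_w f(z,w)\,zw}\,dz\big)$ is a well-defined meromorphic $1$-form on $\widetilde C$, regular on $\widetilde{C^\bullet}$ because $\New(h)\subseteq\Delta_a$ kills the boundary poles, and regular at the preimages of the nodes precisely when $h$ vanishes at the nodes (the residue/conductor computation at an ordinary node: the section of $\omega_C$ has at a node the local form $\frac{dz}{z}$ on one branch and $-\frac{dz}{z}$ on the other unless the numerator vanishes there). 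This establishes that $h\mapsto\xi_h$ maps $L$ into $H^0(\widetilde C,\omega_{\widetilde C})$, and linearity and injectivity are immediate: $\xi_h\equiv0$ forces $h\equiv 0$ on $C$, and since $\New(h)\subseteq\Delta_a$ and $\deg f$ corresponds to $\Delta\supsetneq\Delta_a$ in the relevant lattice directions, $h$ cannot be divisible by $f$, so $h=0$.

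Finally, surjectivity (hence the isomorphism) follows by a dimension count. On one side, $\dim H^0(\widetilde C,\omega_{\widetilde C})=g$, the geometric genus. On the other side, $\dim H^0(X,\omega_X\otimes\Li)=|\Delta^\circ\cap\Z^2|=p_a$, the arithmetic genus of $\Li$, and imposing vanishing at the $\delta=p_a-g$ nodes cuts the dimension down by exactly $\delta$ --- the key point being that the nodes of a curve in $V_{g,\Li}^\irr$ impose independent conditions on the adjoint linear system $|\omega_X\otimes\Li|=|\Delta_a|$, which is a standard consequence of the smoothness and expected dimensionality of the Severi variety (equivalently, of the surjectivity of the restriction $H^0(X,\omega_X\otimes\Li)\to\OO_{\mathrm{nodes}}$, which is dual to the statement that the nodes can be independently smoothed). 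Thus $\dim L=p_a-\delta=g=\dim H^0(\widetilde C,\omega_{\widetilde C})$, and an injective linear map between spaces of equal finite dimension is an isomorphism. \emph{The main obstacle} I anticipate is the careful local verification at the nodes --- that the explicit form $\xi_h$ is regular on $\widetilde C$ \emph{if and only if} $h$ vanishes at the node, and that these vanishing conditions are independent on $|\Delta_a|$; the global cohomological input (adjunction, the toric identification of $H^0(\omega_X\otimes\Li)$ with $\Delta_a$-supported Laurent polynomials, and the Severi-variety dimension count) is standard and can be cited, but the residue bookkeeping identifying "holomorphic on the normalization" with "numerator vanishes at the nodes" is where the argument has to be done by hand.
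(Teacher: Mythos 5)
Your proposal is correct and follows essentially the same route as the paper: exhibit $h\mapsto\xi_h$ as an injective linear map into $H^0(\widetilde C,\omega_{\widetilde C})$, check holomorphicity at the nodes (where vanishing of $h$ is exactly the conductor condition) and along the boundary (where $\New(h)\subseteq\Delta_a$ kills the poles), and conclude by matching dimensions. The only cosmetic difference is where $\dim L=g$ comes from: the paper identifies $L$ with the tangent space to the locus $U^{C}$ of curves with fixed boundary intersection, which is smooth of dimension $g$, whereas you derive it from the independence of the node conditions on the adjoint system $|\omega_X\otimes\Li|$ --- two equivalent consequences of the smoothness and expected dimension of the Severi variety.
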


\begin{proof}
Observe that $L$ is isomorphic to the tangent space to $U^{C}$ at the point $[C]$, which is smooth of dimension $g$. It follows that $L$ has the expected dimension. Moreover, the two meromorphic $1$-forms $\xi_{h}$ and $\xi_{\tilde h}$ are linearly independent provided that $h$ and $\tilde h$ are. It remains to show that $\xi_h$ is indeed holomorphic.

Since $h\in L$ vanishes at the nodes of $C$, the form $\xi_h$ is holomorphic in a neighborhood of the branches of the nodes. This can be seen using a local parametrization of the branches in $\Xb$. The fact that $\xi_h$ is holomorphic on the rest of the curve is proven in \cite[Lemma $4.3$]{CL1} (the proof is identical except that the charts $\C^2$ are to be replaced with charts $\C^*\times \C$).
\end{proof}

\begin{proof}[Proof of Theorem \ref{thm:integrals1}]
Assume towards the contradiction that the restriction of $\Phi_\Upsilon$ to $U^{C}$ is not a local diffeomorphism at $[C]$. Then, there exists a non-zero polynomial $h$ as in Lemma \ref{lem:1form} such that for
$$C_t:=\overline{\{(z,w)\in \Xb \, \vert \, f(z,w)=t\cdot h(z,w)\}},$$
the derivative of the smooth function $t\mapsto \Phi_{\Upsilon}([C_t])$ vanishes at $0$. For any $j=1,\dotsc,g$, fix an arbitrarily small neighborhood $V_j\subset \Xb$ of $M_j$ and an arbitrarily small $\varepsilon>0$. In order to compute the $j^{th}$ coordinate of the derivative of $\Phi_{\Upsilon}([C_t])$, we introduce a $C^\infty$ complex-valued, time-dependent vector field
\[\chi(z,w,t)=\alpha(z,w,t) \partial z + \beta(z,w,t) \partial w\]
defined on $V_j$, in time $\vert t\vert <\varepsilon$, whose associated flow $\phi_t\:V_j\rightarrow \Xb$  maps
$C_0\cap V_j$ to $C_t$ and such that $\phi_t(M_j)$ is a smooth membrane. In particular, the $j^{th}$ coordinate of $\Phi_{\Upsilon}([C_t])$ is given by the integral $\int_{\phi_t(M_j)}\Omega$. Any such vector field, if it exists, satisfies the relation
\begin{equation}\label{eq:relationchi}
\alpha\cdot \partial_z f + \beta \cdot \partial_w f = h+ t (\alpha\cdot \partial_z h + \beta \cdot \partial_w h),
\end{equation}
since any integral curve $t\mapsto (z(t),w(t))$ of $\chi$ satisfies the relation $f(z(t),w(t))=t\cdot h(z(t),w(t))$ whose derivative with respect to $t$ is \eqref{eq:relationchi}.
By the assumption made on $\Phi_\Upsilon$, we have
\begin{equation}\label{eq:Liederivative}
0 \; = \; \frac{d}{dt} \left. \int_{\phi_t(M_j)}\Omega \; \right|_{t=0}\; = \;  \frac{d}{dt} \left. \int_{M_j}\phi_t^\ast \Omega \; \right|_{t=0} \; = \;   \left. \int_{M_j} \frac{d}{dt} \phi_t^\ast \Omega \; \right|_{t=0},
\end{equation}
where the second equality is the change of variables formula and  the last integrand is the Lie derivative of $\Omega$  with respect to $\chi$. According to  Cartan Formula \cite[Proposition 5.3.1]{Lang99} and the fact that $\Omega$ is closed, the Lie derivative of $\Omega$ is the derivative $d(\chi \lrcorner \Omega)$ of the contraction of $\Omega$ with $\chi$. Using the relation
\[\partial_z f(z,w) \cdot dz + \partial_w f(z,w) \cdot dw \; =\; 0 \]
valid on $C$ and the relation \eqref{eq:relationchi} specialized at $t=0$, we obtain that
\begin{equation}\label{eq:contraction}
(\chi \lrcorner \Omega)_{\vert_C} \; =\;  \frac{\alpha\cdot dw - \beta \cdot  dz}{zw} \; =\; \frac{-dz}{zw}\Big( \frac{\partial_z f}{\partial_w f} \alpha  +\beta\Big) \; =\; -\dfrac{\alpha \cdot \partial_z f + \beta \cdot \partial_w f}{zw\partial_w f} dz \; =\; \frac{-h}{zw\partial_w f} dz,
\end{equation}
whose pullback to $\widetilde C$ is the holomorphic form $-\xi_h$, see Lemma \ref{lem:1form}. Putting everything together, we obtain that
\begin{equation}\label{eq:diffzero}
0  \; = \;   \left. \int_{M_j}\frac{d}{dt} \phi_t^\ast \Omega \; \right|_{t=0} \; = \; \int_{M_j}  d(\chi \lrcorner\Omega) \; = \; \int_{\partial M_j}  \chi \lrcorner\Omega \; = \; -\int_{\gamma_j}  \xi_h
\end{equation}
where the penultimate equality follows from Stokes' Theorem and the last equality from \eqref{eq:contraction}. Since \eqref{eq:diffzero} is valid for any $j$, and since the homology classes $[\gamma_j]$ form a basis of $H_1(\widetilde C,\Z)$, we deduce that $\xi_h$ is zero, and in turn, that $h=0$. This is a contradiction.

It remains to prove the existence of the vector field $\chi$. As a first approximation, we can define
$\chi$ to be the meromorphic vector field defined by
\[ \alpha := \frac{\mu h}{\partial_z f-t\partial_z h} + \lambda(\partial_w f-t\partial_w h) \quad \text{and} \quad
\beta := \frac{(1-\mu) h}{\partial_w f-t\partial_w h} - \lambda(\partial_z f-t\partial_z h) \]
with $\mu, \lambda \in \C$. Since $\varepsilon$ is arbitrarily small, each curve $C_t\cap V_j$ is
smooth. In particular, the gradient $(\partial_z f-t\partial_z h,\partial_w f-t\partial_w h)$ of $f-th$
is nowhere vanishing on $\mathscr{C}:=\bigcup_{\vert t \vert<\varepsilon} \; C_t\cap V_j$. Therefore,
setting $\mu$ to either $0$ or $1$ and $\vert \lambda \vert$ large enough, we can ensure that $
\chi$ is holomorphic and nowhere vanishing on $\mathscr{C}$. Since $\chi$ satisfies \eqref{eq:relationchi}, the flow $\phi_t$ maps $C_0$ to $C_t$.
A priori, we cannot guarantee that $\chi$ is well defined on the whole $V_j\times \left\lbrace \vert t \vert <\varepsilon \right\rbrace$, since it is only meromorphic and may have poles inside the latter set. To fix this, we can replace $\chi$ with $\Psi\cdot \chi$, where $\Psi(z,w,t)$ is a test function supported on an arbitrarily small neighborhood of $\mathscr{C}$ and such that $\Psi_{\vert\mathscr{C}}=1$. The resulting vector field is $C^\infty$ and has the required properties.
\end{proof}

\section{Simple Harnack curves}\label{sec:harnack}

In this section, we label the edges of $\Delta$ by $\Delta_1,\dotsc, \Delta_n$ according to the counterclockwise ordering on $\partial \Delta$. For an edge $\Delta_j$, we denote by $\D_j$ the corresponding toric divisor in $X$. We also denote by $(z,w)$ the coordinates on $\Xb \simeq \ttor$ induced by the dual lattice $\Z^2$.
The complex conjugation on $\ttor$ extends to an anti-holomorphic involution $\conj$ on $X$. A curve $[C] \in \LL$ is real if $\conj(C)=C$ and we denote by $\R C$ the fixed locus of $\conj_{\vert C}$. For convenience, denote $C^\bullet:=C\cap\Xb$. Recall the \emph{amoeba map}
\[
\begin{array}{rcl}
\A\: \ttor & \rightarrow & \R^2\\
(z,w) & \mapsto & \big( \log \vert z \vert, \log \vert w \vert \big)

\end{array}.
\]

\begin{defn}\label{def:harnack}
A real curve $[C] \in \LL$ is a (possibly singular) \emph{simple Harnack curve} if the restriction of the amoeba map $\A\: C^\bullet \rightarrow \R^2$ is at most $2$-to-$1$.
\end{defn}

The above definition is shown to be equivalent to the original definition \cite[Definitions $2$ and $3$]{MR} in \cite[Theorem $1$]{MR}. It is clear from the definition of the map $\A$ that it identifies pairs of complex conjugated points on a real curve $C$. Therefore, simple Harnack curves are exactly those curves $C$ for which the restriction of $\A$ to $C^\bullet$ realizes the quotient of $C^\bullet$ by the involution $\conj$, see e.g., \cite[Lemma 5]{MR}. In particular, the restriction of $\A$ to $\R C^\bullet$ is $1$-to-$1$ onto the boundary of $\A(C^\bullet)$, see also \cite[Lemma 8 and Corollary 9]{Mikh} for smooth curves.

Recall that any smooth simple Harnack curve $C$ satisfies the following properties: $C$ is maximal, i.e., $b_0(\R C)=g+1$, where $g$ is the genus of $C$, and the real part $\R C$ has a unique component that intersects the boundary divisor $\partial X$. It follows that $\A(C^\bullet)$ is a topological disc with $g$ holes bounded by the images of the $g$ remaining components of $\R C$ and with punctures on its boundary. Smooth/singular simple Harnack curves are obtained from each other by contracting/expanding the latter holes, as shown by the following lemma.

\begin{lem}\label{lem:singharnack}
The only singularities of simple Harnack curves are real isolated double points. Moreover, there exists a smooth simple Harnack curve in the neighborhood of any singular simple Harnack curve $[C] \in \LL$.
\end{lem}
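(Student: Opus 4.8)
The plan is to exploit the fact, recalled just above, that the amoeba map $\A$ realizes the quotient $C^\bullet \to C^\bullet/\conj$ for any simple Harnack curve, so that the topology of $C^\bullet$ and of its real locus is essentially controlled by the image amoeba $\A(C^\bullet) \subset \R^2$. First I would set up the local picture at a singular point $p$ of $C$. Since $\A$ is at most $2$-to-$1$ on $C^\bullet$, the preimage $\A^{-1}(\A(p)) \cap C^\bullet$ has at most two points; because $\A$ identifies complex-conjugate points, either $p$ is real (and $\A^{-1}(\A(p))=\{p\}$), or $p$ and $\conj(p)$ are the two points of the fiber. In the latter case, near $p$ the map $\A$ would be injective, forcing the germ of $C^\bullet$ at $p$ to be homeomorphic to its image under an injective map to $\R^2$; but a curve singularity has local branches whose normalization is a union of discs, and an injective continuous image of such a configuration in the plane cannot separate the local homology the way a genuine singular point does unless $p$ is real. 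I would make this precise by contradiction: a non-real singular point $p$ would have a $\conj$-conjugate singular point, and a small loop in $\A(C^\bullet)$ around $\A(p)$ would have two disjoint lifts meeting both $p$ and $\conj p$, contradicting that $\A$ is the quotient by $\conj$ near a point fixed (or not) — the cleanest route is: if $p$ is not real then $\A|_{C^\bullet}$ is a local homeomorphism near $p$, so $C^\bullet$ is locally smooth there, contradiction. Hence every singular point is real.

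Next I would identify the type of the real singularity. Let $p$ be a real singular point. Restricting $\A$ to $\R C^\bullet$, which maps $1$-to-$1$ onto $\partial \A(C^\bullet)$, and using that $\A$ is the quotient by $\conj$: the local branches of $C^\bullet$ at $p$ are permuted by $\conj$, and their images under $\A$ must fit inside a set on which $\A$ is at most $2$-to-$1$. A node with two real branches (crossing) would produce, locally, a point where $\A(C^\bullet)$ has two sheets crossing transversally, i.e. a $4$-to-$1$ point of $\A$, which is excluded; a node with two conjugate branches is a real isolated double point ($x^2+y^2=0$ type), whose real locus is the single point $p$ and whose complex points near $p$ map $2$-to-$1$ onto a small disc in $\R^2$ — consistent with the Harnack condition. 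More generally, any singularity with $\delta$-invariant or multiplicity larger than that of an ordinary double point, or any singularity producing real branches, would violate the $2$-to-$1$ bound (one counts local intersection with $\R C$, or uses that a higher multiplicity point forces more than two local sheets). Since $V_{g,\Li}^\irr$-type curves are nodal, and more importantly the $2$-to-$1$ constraint caps the local complexity, I conclude the only singularities are real isolated double points (ordinary, of conjugate-branch type).

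For the second assertion — smoothing — I would use the description already recalled: $\A(C^\bullet)$ is a topological disc with some holes and with punctures on its outer boundary, where each isolated real double point $p$ of $C$ corresponds to a \emph{contracted} hole, i.e. a point of $\A(C^\bullet)$ in the interior which is a limit of holes. Concretely, near such $p$ the curve $C^\bullet$ locally looks like $\{z^2+w^2 = 0\}$, which deforms to $\{z^2+w^2 = \varepsilon\}$ for small real $\varepsilon>0$; the latter has an oval whose amoeba is a small genuine hole. I would argue that one can perform this deformation simultaneously at all singular points while staying inside $\LL$ and preserving the Harnack property: the smoothing is realized by adding $\varepsilon \cdot h$ to a defining Laurent polynomial $f$ of $C$ for a suitable $h \in H^0(X,\Li)$ chosen to open all the ovals in the correct (real, positively-oriented) way, using that the space of simple Harnack curves in $\LL$ with fixed Newton polygon is known (by Mikhalkin–Rullgård) to be connected and to have smooth curves in the closure of every stratum — equivalently, the expanding/contracting holes operation described in the paragraph preceding the lemma is reversible. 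The cleanest self-contained argument: the real double point is a local model $z^2+w^2=0$, its versal deformation within real curves contains the smoothing with a small oval, the smoothed curve still has amoeba at most $2$-to-$1$ because only a small controlled neighborhood changes, and patching these local smoothings gives a global nearby simple Harnack curve.

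The main obstacle I expect is the global part of the second assertion: showing that the local smoothings at the several double points can be carried out simultaneously by a \emph{single} nearby curve in the linear system $\LL$ that is still simple Harnack — i.e. that there is no obstruction to opening all the ovals at once in a way compatible with the real structure and the amoeba bound. Dealing with the orientation/sign of each smoothing (each real node must be opened to produce an oval on the correct side so the global real curve remains maximal with the single unbounded component) is the delicate point; I would handle it either by invoking the known structure of the space of simple Harnack curves with given Newton polygon, or by an explicit dimension count showing the smoothing directions in $H^0(X,\Li)$ at all nodes are independent and can be prescribed signs. The first assertion, by contrast, I expect to be a short local argument once the "$\A$ is the quotient by $\conj$" principle is applied carefully.
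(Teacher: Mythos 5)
Your smoothing argument is essentially the paper's: one perturbs a defining polynomial $f$ to $f+\varepsilon h$ and uses the Morse lemma to open each real isolated double point into a compact oval. The step you flag as the ``main obstacle'' --- producing a single $h$ that opens all the nodes simultaneously and with the correct signs --- is resolved in the paper by exactly the second option you mention, and more cheaply than you anticipate: one imposes the linear conditions $h(z,w)=-1$ (resp.\ $+1$) at each node where $f$ is locally positive (resp.\ negative) on $(\R^*)^2$; since the number of nodes is at most $\vert\Delta^\circ\cap\Z^2\vert$, which is strictly less than $\dim\LL$, the space of such $h$ (with $\New(h)=\Delta$) is nonempty. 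No appeal to connectivity of the space of simple Harnack curves, and no independent ``prescription of signs of smoothing directions,'' is needed beyond this count; the sign bookkeeping is entirely absorbed into the choice of the values $\mp1$, and the resulting curve is checked to be simple Harnack against the original definition of \cite{MR}.

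For the first assertion the comparison is different: the paper does not prove it at all --- it is part of the original definition in \cite[Definition 3]{MR}, and its compatibility with Definition~\ref{def:harnack} is \cite[Lemma 6]{MR}. Your attempted direct derivation from the $2$-to-$1$ property has genuine gaps. The step ``if $p$ is not real then $\A$ is a local homeomorphism near $p$, so $C^\bullet$ is locally smooth there'' is not a valid implication: an injective continuous map of a singular germ into $\R^2$ does not force smoothness (it does rule out a multibranch non-real point via invariance of domain, but says nothing against, e.g., a non-real cusp, whose germ is topologically a disc). Likewise, the exclusion of real singularities other than conjugate-branch double points (real cusps, tacnodes, higher multiplicity points) is only gestured at; making ``more than two local sheets'' precise is exactly the nontrivial content of \cite[Lemma 6]{MR}, which ultimately rests on the structure of the logarithmic Gauss map rather than on a pointwise fiber count. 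If you want a self-contained proof of the first assertion you would need to import that argument; otherwise, citing \cite{MR} as the paper does is the intended route.
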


\begin{proof}
The first property is actually a part of the original definition \cite[Definition $3$]{MR}. With respect to Definition \ref{def:harnack} above, the latter property corresponds to \cite[Lemma $6$]{MR}. For the second part of the statement, let us fix a defining Laurent polynomial $f(z,w)$ for $C^\bullet$. Then, there exists another Laurent polynomial $h(z,w)$ satisfying the following:

-- the Newton polygon $\New(h)$ of $h$ equals $\New(f)=:\Delta$;

-- for any node $(z,w) \in C^\bullet$, we have $h(z,w)=-1$ (resp. $h(z,w)=1$) if $f$ is positive (resp. negative) in a small neighborhood $\mathcal{U} \subset (\R^*)^2$ of $(z,w)$.\\
Indeed, the latter conditions are linear in the space $\LL$ and there are as many conditions as nodes. In particular, there are at most $\vert \Delta^\circ \cap \Z^2\vert $ of them by Khovanskii's formula, which in turn is strictly less than the dimension of $\LL$. The space of polynomials $h$ as above is therefore not empty.

It follows now from the Morse Lemma that the curve defined by $f+\varepsilon h$ is a smooth simple Harnack curve for any arbitrarily small $\varepsilon>0$. Indeed, for any node $(z,w)$, there exist real analytic coordinates centered at $(z,w)$ such that $f+\varepsilon h$ reads either $z^2+w^2-\varepsilon$ or $\varepsilon-z^2-w^2$, that is the isolated double point of $C$ are deformed into compact ovals in $\R C^\bullet$. The resulting curve satisfies therefore the requirements of \cite[Definition $2$]{MR} which is equivalent to Definition \ref{def:harnack}.
\end{proof}

Recall that for smooth simple Harnack curve $C$, the \emph{order map} $\ord$ of \cite{FPT} establishes a bijective correspondence between the set of compact connected components of $\R C^\bullet$ and the set of lattice points $\Delta^\circ \cap \Z^2$, see \cite[Corollary $10$]{Mikh}.

According to \cite[Lemma 11]{Mikh}, the map $\ord$ on a smooth simple Harnack curve $C$ can
be described as follows. First, assume that the vertex $\Delta_{n}\cap\Delta_{1}$ is the origin. Let
$c_0\subset\R C^\bullet$ be the unique connected component joining the two consecutive toric
divisors $\D_{n}$ and $\D_{1}$. For any compact component $c$ of $\R C^\bullet$, draw a path $
\rho_c \subset \A(C)$ joining $\A(c_0)$ to $\A(c)$. By the $2$-to-$1$ property of the amoeba
map, the lift $\gamma_c:=\A^{-1}(\rho_c)$ is a loop in $C^\bullet$, which is invariant under complex conjugation. There exists a unique orientation of $\gamma_c$ such that the corresponding homology class $(a,b) \in H_1(\ttor, \Z)$ satisfies $(-b,a) \in \Delta^\circ \cap \Z^2$ (note the sign mistake in the sixth line of the proof of \cite[Lemma 11]{Mikh}). Then, we have $\ord(c)=(-b,a)$. Observe that the homology class of $\gamma_c$ is independent of the choice of $\rho_c$. In particular, the map $\ord$ is well defined.

For a singular simple Harnack curve $C$, a compact component $c$ of $\R C^\bullet$ can be of two types: either $c$ is a topological circle or it is a real isolated double point of $C$. In the former case, we define $\gamma_c$ as above. If $c$ is now a node of $C$  we can repeat the same construction as above where $\rho_c$ is a path joining $\A(c_0)$ to $\A(c)$ and $\gamma_c:=\A^{-1}(\rho_c)$.

\begin{defn}\label{def:ord}
For a singular simple Harnack curve $C$, define the \emph{order map}

$$\ord\: \{\text{compact components of } \R C^\bullet\} \rightarrow \Delta^\circ \cap \Z^2$$
by $\ord(\nu)=(-b,a)$ where $(a,b) \in H_1(\Xb, \Z)$ is the homology class of the (properly oriented) loop $\gamma_c$ constructed above.
\end{defn}

The fact that  $(-b,a) \in \Delta^\circ \cap \Z^2$ (with the appropriate orientation of $\gamma_c$) follows from the facts that

-- it holds for smooth curves $C$;

-- the homology class of $\gamma_c$ is constant under small perturbations of $C$;

-- singular Harnack curves can always be perturbed into smooth ones, thanks to Lemma \ref{lem:singharnack}.\\
By the same arguments, the statement below follows now from \cite[Corollary $10$]{Mikh}.

\begin{prop}\label{prop:extord}
For any singular simple Harnack curve, the order map is a bijection.
\end{prop}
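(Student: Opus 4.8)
The plan is to reduce the statement to the smooth case, where it is \cite[Corollary $10$]{Mikh}, by deforming $C$ into a smooth simple Harnack curve and tracking the order map through the deformation. Fix a Laurent polynomial $f$ defining $C^\bullet$ and let $h$ be as in the proof of Lemma~\ref{lem:singharnack}, so that for all small $\varepsilon>0$ the curve $C_\varepsilon$ cut out by $f+\varepsilon h$ is a smooth simple Harnack curve. In the real-analytic coordinates near a node of $C$ furnished by the Morse Lemma in that proof, $f+\varepsilon h$ reads $z^2+w^2-\varepsilon$ or $\varepsilon-z^2-w^2$, so each real isolated double point of $\R C^\bullet$ opens into exactly one small compact oval of $\R C_\varepsilon^\bullet$; away from the nodes, $f+\varepsilon h$ is a $C^0$-small perturbation of $f$ on compact sets, hence every circle component of $\R C^\bullet$ deforms to an isotopic circle component of $\R C_\varepsilon^\bullet$ and no new compact components are created. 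This produces a canonical bijection $\Phi$ from the set of compact components of $\R C^\bullet$ onto the set of compact components of $\R C_\varepsilon^\bullet$.

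Next I would check that $\ord_C=\ord_{C_\varepsilon}\circ\Phi$. Given a compact component $c$ of $\R C^\bullet$, choose a path $\rho_c\subset\A(C^\bullet)$ from $\A(c_0)$ to $\A(c)$ as in Definition~\ref{def:ord}. Since $\A(C_\varepsilon^\bullet)$ converges to $\A(C^\bullet)$ in the Hausdorff metric and $\A(c_0)$, $\A(c)$ move continuously with $\varepsilon$, one can choose for small $\varepsilon$ a path $\rho_{\Phi(c)}\subset\A(C_\varepsilon^\bullet)$ from $\A(c_{0,\varepsilon})$ to $\A(\Phi(c))$ depending continuously on $\varepsilon$ and with $\rho_{\Phi(c)}\to\rho_c$ as $\varepsilon\to 0$. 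The lifts $\gamma_{\Phi(c)}:=\A^{-1}(\rho_{\Phi(c)})$, oriented as prescribed in Definition~\ref{def:ord}, then form a continuous family of loops in $\Xb$ converging to $\gamma_c=\A^{-1}(\rho_c)$; as $H_1(\Xb,\Z)\cong\Z^2$ is discrete, their homology class does not depend on $\varepsilon$ and equals that of $\gamma_c$, so $\ord_{C_\varepsilon}(\Phi(c))=\ord_C(c)$. Since $C_\varepsilon$ is a smooth simple Harnack curve, $\ord_{C_\varepsilon}$ is a bijection onto $\Delta^\circ\cap\Z^2$ by \cite[Corollary $10$]{Mikh}; as $\Phi$ is a bijection, $\ord_C$ is a bijection onto $\Delta^\circ\cap\Z^2$ as well.

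The step needing most care is the continuity of the loops $\gamma_c$ near a node, where $\A(c)$ lies on the boundary of the amoeba and $\gamma_c$ is the preimage of a path that terminates at that boundary point: one must verify, in the local model $z^2+w^2=\pm\varepsilon$, that the cycle encircling the shrinking oval degenerates continuously to the pinched loop through the node, and that the sign convention singling out $(-b,a)\in\Delta^\circ$ is respected in the limit. Everything else is a direct consequence of Lemma~\ref{lem:singharnack} and the deformation invariance of homology classes, exactly as in the discussion preceding Definition~\ref{def:ord}.
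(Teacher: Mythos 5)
Your proposal is correct and follows essentially the same route as the paper: perturb the singular curve to a smooth simple Harnack curve via Lemma~\ref{lem:singharnack}, use the deformation-invariance of the homology classes of the loops $\gamma_c$, and reduce to the smooth case \cite[Corollary~10]{Mikh}. The paper states this in three lines; you have merely filled in the details (the component bijection $\Phi$ and the continuity of the lifted paths near a node), which is exactly the content the paper leaves implicit.
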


The existence of smooth simple Harnack curves in $\LL$ is guaranteed by \cite[Corollary A4]{Mikh}. For singular ones, the existence is addressed in \cite[Theorem 6]{KO}, \cite[Theorems 2 and 10]{Bru14},  \cite[Theorem 3]{CL1} in various contexts. The theorem below discusses the existence of singular Harnack curves with prescribed order map.

\begin{thm}\label{thm:contraction}
For any integer $0 < g < \vert \Delta^\circ \cap \Z^2\vert$ and any subset $E \subset \Delta^\circ \cap \Z^2$ of cardinality $\delta:=|\Delta^\circ \cap \Z^2\vert-g$,
there exists a one-parameter family of simple Harnack curves $\{[C_t]\}_{t\in [0,1]}\subset |\Li|$ such that
$C_t$ is smooth for all $t<1$, the curve $C_1$ is singular and $\ord(\{\text{nodes of }C_1\})=E$. In particular, any Severi variety $\sv{\Li, g}^{\irr}$ contains a simple Harnack curve.
\end{thm}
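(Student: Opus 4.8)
The plan is to build the singular simple Harnack curve $C_1$ with the prescribed nodes by degenerating a smooth one, using the description of the order map via loops $\gamma_c$ and the "expanding/contracting holes" picture already recorded in the discussion preceding Lemma~\ref{lem:singharnack}. The starting point is a smooth simple Harnack curve $[C_0]\in|\Li|$, whose existence is guaranteed by \cite[Corollary A4]{Mikh}. By \cite[Corollary 10]{Mikh}, the order map $\ord$ identifies the $|\Delta^\circ\cap\Z^2|$ compact ovals of $\R C_0^\bullet$ with the interior lattice points, so each oval $c$ carries a well-defined label $\ord(c)\in\Delta^\circ\cap\Z^2$. The heart of the argument is to move inside the space of simple Harnack curves in $|\Li|$ so as to shrink exactly the ovals labelled by the chosen subset $E$ down to real isolated double points, while keeping the remaining $g$ ovals of positive size; the resulting curve $C_1$ is then a singular simple Harnack curve with $\ord(\{\text{nodes of }C_1\})=E$ by Definition~\ref{def:ord} together with the three bullet points asserting that $\ord$ is perturbation-invariant.

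**Key steps.** First I would set up the deformation: fix a defining Laurent polynomial $f_0$ for $C_0^\bullet$, and recall from the proof of Lemma~\ref{lem:singharnack} (run in reverse) the linear subspace of $|\Li|$ consisting of polynomials $h$ with $\New(h)=\Delta$ and prescribed signs $\pm 1$ at specified real points; here the relevant points are one interior point of each oval $c$ with $\ord(c)\in E$. Second, I would argue that for suitable $h$ the pencil $f_0+sh$ (or, more robustly, a family $f_s$ staying in the simple Harnack locus — using that this locus is connected and that its boundary strata correspond exactly to isolated nodes, cf. \cite{MR}, \cite[Theorem 6]{KO}) can be chosen so that as $s$ increases the area of the amoeba hole bounded by $\A(c)$ for $c\in\ord^{-1}(E)$ decreases monotonically to $0$, while the holes for $c\notin\ord^{-1}(E)$ stay bounded away from $0$. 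The monotonicity of hole areas under such deformations is exactly the mechanism in \cite[Theorem 6]{KO} and \cite[Theorems 2 and 10]{Bru14}; I would invoke that machinery rather than reprove it. Third, at the terminal parameter $t=1$ the curve $C_1$ is simple Harnack (the simple Harnack condition is closed), its singularities are precisely real isolated double points by Lemma~\ref{lem:singharnack}, and they sit at the collapsed ovals; since the homology class of $\gamma_c$ is locally constant along the family, $\ord$ on $C_1$ agrees with $\ord$ on $C_0$ and hence $\ord(\{\text{nodes of }C_1\})=E$. Finally, since $|E|=\delta=|\Delta^\circ\cap\Z^2|-g$, the curve $C_1$ has geometric genus $g$, is irreducible (a simple Harnack curve is irreducible, as its amoeba is connected with the right number of complementary regions), nodal, and torically transverse, so $[C_1]\in V_{g,\Li}^{\irr}$; this gives the last sentence.

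**Main obstacle.** The delicate point is the independent control of the individual hole areas: one must show there is a single deformation direction (or a path in the simple Harnack locus) that collapses precisely the chosen subset of ovals without accidentally collapsing others or leaving the simple Harnack locus. The sign-prescription trick from Lemma~\ref{lem:singharnack} handles which ovals acquire nodes \emph{in the limit}, but making the collapse happen simultaneously and staying Harnack throughout requires the area-monotonicity/compactness input of Kenyon--Okounkov and Brugallé; I expect the write-up to lean on \cite[Theorem 6]{KO}, \cite[Theorems 2 and 10]{Bru14}, and \cite[Theorem 3]{CL1}, assembling them into the one-parameter statement rather than developing new analysis. A secondary, more routine point is checking that after the collapse the remaining $g$ ovals are genuinely still ovals (not further degenerate), which follows from choosing $h$ to impose \emph{no} sign condition near those ovals together with the openness of the smooth-oval condition.
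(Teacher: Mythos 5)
Your overall strategy --- start from a smooth simple Harnack curve and shrink exactly the amoeba holes labelled by $E$ to isolated real double points, then use local constancy of the loops $\gamma_c$ to conclude $\ord(\{\text{nodes of }C_1\})=E$ --- is the same as the paper's. But there is a genuine gap at precisely the point you flag as the ``main obstacle,'' and the way you propose to close it does not work. You defer the independent control of the individual hole areas to \cite[Theorem 6]{KO} and \cite{Bru14}, ``rather than reprove it''; however those results cover $\mathbb{P}^2$ (and Hirzebruch surfaces / rational curves), not an arbitrary polarized toric surface $(X,\Li)$, and there is no ``monotonicity of hole areas'' statement there to invoke. The missing ingredient for general $\Delta$ is exactly Theorem~\ref{thm:integrals1} (equivalently Theorem~\ref{thm:integrals}), one of the paper's main new results: taking $\Upsilon$ to be the $g_\Delta$ compact ovals of $\R C^\bullet$ and the membranes $M_j$ to be the discs they bound in $(\R^*)^2$, the integrals of $\Omega$ compute the Euclidean areas of the holes, so $\Area\:H\to\R_{\geqslant 0}^{\Delta^\circ\cap\Z^2}$ is a \emph{local diffeomorphism} on the space $H$ of smooth simple Harnack curves with fixed boundary intersection. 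The paper then extends $\Area$ continuously to $\overline H$ via Lemma~\ref{lem:singharnack}, borrows only the \emph{properness} argument from \cite[Theorem 6]{KO} (which does generalize verbatim), and produces the one-parameter family by lifting the segment $\sigma$ from $\Area([C_0])$ to the point with zero coordinates on $E$, using a maximal-path argument (local diffeomorphism $+$ properness forces the maximal lift to reach the endpoint). Without the local-coordinates statement your path cannot be shown to exist, to stay in the Harnack locus, or to collapse only the chosen ovals.

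A secondary problem is your proposed mechanism. The sign-prescription construction of Lemma~\ref{lem:singharnack} prescribes the values of $h$ \emph{at the nodes} of an already singular curve so that the Morse lemma smooths them into ovals; running it ``in reverse'' by prescribing signs at interior points of the ovals you wish to kill gives no control whatsoever on the limit of the pencil $f_0+sh$ --- nothing forces those ovals to contract, the other holes to stay open, or the pencil to remain in the simple Harnack locus. So that step should be discarded in favour of the $\Area$-map argument. The final paragraph of your proposal (genus count, irreducibility, toric transversality, hence $[C_1]\in V_{g,\Li}^{\irr}$) is fine.
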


In \cite{KO}, it was observed that the Euclidean area of the $g$ compact holes of the amoeba $\A(C^\bullet)$ of a smooth simple Harnack curve can be completed into a system of coordinates on the space of all simple Harnack curves for $X=\mathbb{P}^2$, see \cite[Theorem 6]{KO}. In particular, any sub-collection of these holes can be contracted to points by prescribing the corresponding area to tend to zero.

The proof of Theorem \ref{thm:contraction} is based on the same idea. Below, we give an alternative proof of the fact that the area of the holes of $\A(\Cb)$ provides local coordinates, building on the material of Section \ref{sec:def}.

\begin{proof}[Proof of Theorem \ref{thm:contraction}]
Choose any smooth simple Harnack curve $[C_0] \in \LL$, and denote by $H$ the space of smooth simple Harnack curves in $\LL$ that coincide with $C_0$ on  $X \setminus \Xb$. Note that $H \subset \LL$ is a smooth subvariety of real dimension $g_\Delta:=\vert \Delta^\circ \cap \Z^2\vert$.

Consider the continuous map $\Area\: H \rightarrow \R_{\geqslant 0}^{\Delta^\circ \cap \Z^2}$ that associates to any curve $C$ the Euclidean area of the holes of $\A(C^\bullet)$, where each hole is indexed by the corresponding point in $\Delta^\circ \cap \Z^2$ via the order map $\ord$. We claim that the map $\Area$ is a local diffeomorphism. Indeed, the map $\Area$ is nothing but the map $\Phi_\Upsilon$, where $\Upsilon$ consists of the $g_\Delta$ compact connected components of $\R \Cb$ (note that the existence of the map $\ord$ implies that the monodromy of $H$ acts trivially on $\Upsilon$). To see this, pick membranes $M_j$ to be the disc in $(\R^*)^2$ bounded by $\gamma_j$. Since the $2$-form $\Omega$ is the pullback under the coordinate-wise complex logarithm of the Euclidean form $dz\wedge dw$, the integrals \eqref{eq:integrals1} compute the Euclidean area of the holes of $\A(\Cb)$, and the claim follows by Theorem~\ref{thm:integrals1}.

Next, notice that the map $\Area$ extends continuously to the closure $\overline{H}$. Indeed, by Lemma \ref{lem:singharnack}, the curves in the boundary of $H$ correspond to the singular curves in which some of the $g_\Delta$ real ovals are contracted to real isolated double points. Therefore, we obtain a continuous map $\Area\: \overline{H} \rightarrow \R_{\geqslant 0}^{\Delta^\circ \cap \Z^2}$. Moreover, by \cite[Theorem 6]{KO}, the latter map is proper in the case of planar curves, and the argument in {\em loc.cit.} generalizes to arbitrary toric surfaces verbatim.

Finally, set $(a_j)_{j\in \Delta^\circ \cap \Z^2}:=\Area([C_0])$, and consider the segment $\sigma \subset \R^{\Delta^\circ \cap \Z^2}$ from the point $\Area([C_0])$ to the point $(b_j)_{j\in \Delta^\circ \cap \Z^2}$ defined by $b_j =0$ if  $j \in E$,  and  $b_j =a_j$ otherwise. We claim that there exists a path in $\overline{H}$ starting from $[C_0]$ and mapping bijectively onto $\sigma$. To see this, define $\mathscr{P}$ to be the set of all closed paths $\rho$  starting from $[C_0]$ and mapping injectively into $\sigma$. The set $\mathscr{P}$ is totally ordered and contains paths with non-empty relative interior since $\Area$ is a covering. Since $\Area$ is proper on $\overline{H}$, there exists a unique maximal element $\tilde{\rho}$ in $\mathscr{P}$. Assume towards the contradiction that the end point $[\widetilde C]$ of $\tilde \rho$ maps to an interior point of $\sigma$. Then $\A(\widetilde{C}^\bullet)$ bounds $g_\Delta$ compact holes, or equivalently $\R \widetilde{C}^\bullet$ contains $g_\Delta$ ovals, that is $\widetilde C$ is smooth. Since  $\Area$ is a local covering on $H$, the smoothness of $\widetilde C$ implies that $\tilde \rho$ can be extended into a longer path in $\mathscr{P}$. This is a contradiction. Therefore $\tilde \rho$ maps bijectively to $\sigma$ and provides the sought degeneration of $C_0$ to $C_1$.
\end{proof}

\section{Topological proof of the main results}\label{sec:proofs}
In this section, we use the notation $\Delta_j$, $\D_j$ introduced in the previous section.
We assume with no loss of generality that $\Delta_n\cap\Delta_1=0$.

\begin{lem}\label{lem:harnackGphi}
Let $[C]\in \sv{g,\Li}^{\irr}$ be a torically transverse simple Harnack curve and denote $E=\ord(\{\text{nodes of }C\})$. If $V\subset \sv{g,\Li}^{\irr}$ is the irreducible component containing $[C]$, then the lattice $M(V)$ of Section \ref{sec:toplat} is generated by $(\Delta \cap \Z^2) \setminus E$.
\end{lem}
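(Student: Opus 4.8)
The plan is to identify the lattice $M(V)$ — which by definition is the rotation by $\frac{\pi}{2}$ of $N(V)=N_C$, the image of $H_1(\tCb,\Z)\to H_1(\Xb,\Z)=\Z^2$ for a general $[C]\in V$ — with the lattice generated by $(\Delta\cap\Z^2)\setminus E$, for the specific simple Harnack representative $[C]$. Since $[C]$ is torically transverse, it is a general point of $V$ for the purpose of computing $N_C$ (by Lemma~\ref{lem:topinv} the lattice is constant on the dense torically transverse locus), so it suffices to compute $N_C$ for this Harnack curve. The key geometric input is the structure of $\R C^\bullet$ for a (possibly singular) simple Harnack curve: by the discussion preceding Definition~\ref{def:ord} and Proposition~\ref{prop:extord}, the compact components $c$ of $\R C^\bullet$ are indexed bijectively by $\Delta^\circ\cap\Z^2$ via $\ord$, and each such $c$ carries a loop $\gamma_c$ whose homology class in $H_1(\Xb,\Z)$ is $(b,-a)$ when $\ord(c)=(-b,a)$ — i.e.\ the class of $\gamma_c$ is the rotation by $-\frac{\pi}{2}$ of $\ord(c)$. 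The components $c$ that are honest circles (not contracted to nodes) are precisely those indexed by $\Delta^\circ\cap\Z^2$ that do \emph{not} lie in $E$; the ones indexed by $E$ have degenerated to the real isolated double points of $C$.

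The first step is to show that the classes $[\gamma_c]$, for $c$ ranging over the non-contracted compact ovals, together with the classes of small loops around the intersection points of $C$ with $\partial X$ (which by Lemma~\ref{lem:topinv} give the primitive vectors $n_i$ along the rays of the dual fan, i.e.\ after rotation the primitive edge vectors of $\Delta$), span $N_C$. The relevant fact is that $H_1(\tCb,\Z)$ is generated, up to the "horizontal/boundary" part, by the vanishing cycles associated to the contracted ovals and by a symplectic-complement family of cycles realized by the $\gamma_c$'s; since the vanishing cycles map to $0$ in $H_1(\Xb,\Z)$ (they bound in $\Xb$, being small loops around the isolated double points which are contractible in $\Xb\simeq\ttor$), the image $N_C$ is exactly generated by the $[\gamma_c]$ for non-contracted $c$, plus the $n_i$'s. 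Rotating by $\frac{\pi}{2}$: $M(V)=M_C$ is generated by $\{\ord(c): c\text{ non-contracted}\}=((\Delta^\circ\cap\Z^2)\setminus E)$ together with the primitive edge vectors of $\Delta$. It then remains to observe that $(\Delta^\circ\cap\Z^2)\setminus E$ together with the primitive edge vectors of $\Delta$ generate the same sublattice as $(\Delta\cap\Z^2)\setminus E$: indeed $\partial\Delta\cap\Z^2$ is generated by the origin $\Delta_n\cap\Delta_1=0$ together with the primitive edge vectors (walking around $\partial\Delta$), so $(\Delta\cap\Z^2)\setminus E = ((\Delta^\circ\cap\Z^2)\setminus E)\cup(\partial\Delta\cap\Z^2)$ generates the claimed lattice.

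The main obstacle I anticipate is the homological bookkeeping in the first step: making precise that the image of $H_1(\tCb,\Z)\to H_1(\Xb,\Z)$ is spanned by the $[\gamma_c]$'s of the \emph{surviving} ovals plus the boundary loops, and in particular that the contracted ovals contribute nothing. For the smooth Harnack curve this is \cite[Corollary 10]{Mikh} together with the classical fact that on a maximal real curve the ovals $\{\gamma_c\}$ and the boundary loops generate $H_1$; the subtlety is the degeneration: as an oval $c\in E$ contracts to an isolated double point, its class $[\gamma_c]$ — which was nonzero in $H_1(\Xb,\Z)$ on the smooth side — must be shown to drop out of $N_C$ after normalization, because on $\tCb$ the corresponding cycle becomes (a combination of) vanishing cycles that bound small discs in $\Xb$. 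One clean way to argue this is to use Lemma~\ref{lem:singharnack} to perturb $C$ to a smooth simple Harnack curve $C'$ in the \emph{full} linear system $|\Li|$ (not keeping the boundary fixed), note that $C'$ has genus $g_\Delta=|\Delta^\circ\cap\Z^2|$, and then degenerate back while tracking which cycles survive in $\tCb$; alternatively, argue directly on the normalization $\tCb$ that its $H_1$ is generated by the $g=g_\Delta-\delta$ surviving $\gamma_c$'s (a symplectic half-basis) plus the boundary loops, and push forward. Once this is set up the rest is the elementary lattice computation above.
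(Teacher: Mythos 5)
Your plan follows essentially the same route as the paper's proof: the paper also works directly on the singular Harnack curve, exhibits the generating set $\ell_1,\dotsc,\ell_k,a_1,\dotsc,a_{2g}$ of $H_1(\tCb,\Z)$ (where $a_1,\dotsc,a_g$ are the surviving ovals and $a_{g+j}$ is the lift under $\A$ of a path from $\A(c_0)$ to $\A(a_j)$, perturbed to avoid the nodes), computes $[\ell_j]=\pm(-b,a)$ for the edge directions, $[a_j]=0$ and $[a_{g+j}]=\pm$ the rotation of $\ord(a_j)$, and concludes with the same lattice identification you give. One point in your ``homological bookkeeping'' should be corrected, though it does not change the outcome. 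On the normalization $\tCb$ of the singular curve there are no vanishing cycles attached to the contracted ovals: for $c\in E$ the would-be loop $\gamma_c$ lifts to an arc joining the two (complex-conjugate) preimages of the node, so it simply fails to be a cycle on $\tCb$ rather than becoming a null-homologous one. Relatedly, your proposed generating set ``the $g$ surviving $\gamma_c$'s plus the boundary loops'' cannot generate $H_1(\tCb,\Z)$, whose rank is $2g+k-1$, from only $g+k$ elements subject to the relation $\sum_j[\ell_j]=0$; you must also include the $g$ surviving ovals themselves. This omission is harmless for the computation of $N_C$, since each oval lies in a single quadrant of $(\R^*)^2$, which is contractible, and hence maps to $0$ in $H_1(\Xb,\Z)$ --- but the argument should be stated with the full generating set, as the paper does.
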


\begin{proof}
As in Section \ref{sec:harnack}, we denote by $c_0$ the unique connected component of $\R \Cb$ joining $\D_n$ to $\D_1$. We denote by $a_1,\dotsc, a_g$ the $g$ ovals of $\R \Cb$. For any $a_j$, we choose a path inside $\A(\Cb)$ joining $\A(c_0)$ to $\A(a_j)$, and denote by $a_{g+j}\subset \Cb$ its preimage under the map $\A$. By perturbing the paths if necessary, we can make sure that the loops $a_{g+j}$ avoid the nodes of $C$. Finally, let $\ell_1,\dotsc, \ell_k$ be small loops around the punctures of $\Cb$. Then, the collection of simple closed curves $\ell_1,
\dotsc,\ell_k$, $a_1,\dotsc,a_{2g}$ (oriented arbitrarily) lifts to a basis of  $H_1(\tCb,\Z)$.
In particular, the homology classes of these curves generate the sublattice $N_C$.

Each $\ell_j$ is associated to a puncture
of $\Cb$ corresponding to the intersection of $C$ with one of the toric divisors of $X$. In turn,
such toric divisor corresponds to an edge of $\Delta$ directed by some primitive integer vector $
(a,b)$. It is a standard fact in toric geometry that the homology class $[\ell_j]$ is then given by $
\pm(-b,a)$ (depending on the orientation of $\ell_j$), cf. \cite[Lemma~ $1.10$]{L2}. In
particular, the sublattice of $N$ generated by the $[\ell_j]$'s is the rotation by $\frac{\pi}{2}$ of the
sublattice $\left\langle \partial \Delta \cap \Z^2\right\rangle$. By the construction and the definition of the order map, the following holds: for any $1\leqslant j \leqslant g$ we have $[a_j]=(0,0)$ and $[a_{g+j}]=\pm(-b,a)$, where $\ord(a_j)=(a,b)$. We deduce that $N_C$ is the
rotation by $\frac{\pi}{2}$ of the lattice generated by $(\Delta \cap \Z^2) \setminus E$, which implies the assertion.
\end{proof}

\begin{proof}[Proof of Theorem \ref{thm:general}]
The theorem follows from Lemmas \ref{lem:topinv} and \ref{lem:harnackGphi} and Theorem \ref{thm:contraction}.
\end{proof}

\begin{lem}\label{lem:kite}
Let $\Delta_{k,k'}\subset \R^2$ be a kite, $g,\delta$ integers such that $\delta+g=k+k'-1$, and $[C]\in \sv{g,\Li}^{\irr}$ a simple Harnack curve. Set $E:= \ord(\{\text{nodes of } C\})$ and
$$E^{even}:= E\cap (\{0\}\times 2\Z) \; \; \text{and} \; \; E^{odd}:=E\setminus E^{even}.$$
Then, the nodal partition of $C$ is $\{\vert E^{even}\vert, \, \vert E^{odd}\vert \}$. In particular,
any integer partition $\delta=\delta_1+\delta_2$ such that $0 \leqslant \delta_1, \delta_2 \leqslant \lceil \frac{k+k'-1}{2} \rceil$  is the nodal partition of some curve in $\sv{g,\Li}^{\irr}$.
\end{lem}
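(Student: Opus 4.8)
The plan is to compute the nodal partition of a torically transverse simple Harnack curve $C$ directly in terms of the order map, using the explicit description of the nodal partition from Section~\ref{sec:nodalpart} together with the description of $\ord$ via homology classes of the loops $\gamma_c$. Recall from Section~\ref{sec:nodalpart} that, writing a defining Laurent polynomial of $C$ as $f(z,w)=az^{-1}w^k+p(w)+bzw^k$, two nodes of $C$ lie in the same block of the nodal partition if and only if they have the same $z$-coordinate, while $z^2=a/b$ is fixed; thus the partition records, among the (at most $\lceil\frac{k+k'}{2}\rceil$) critical points of $p(w)/w^k$ lying over the two critical values $\mp 2\sqrt{ab}$, how many lie over each. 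The key observation is that for a simple Harnack curve the node corresponding to a compact component $\nu$ is a real isolated double point of $\R C^\bullet$, so its $z$-coordinate is \emph{real}, and its sign is exactly what the homology class of $\gamma_\nu$ detects.

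First I would make precise the dictionary between the sign of the $z$-coordinate of a real node $\nu$ and the parity of $\ord(\nu)$. Since $\ord(\nu)=(-b_\nu,a_\nu)$ where $(a_\nu,b_\nu)=[\gamma_\nu]\in H_1(\Xb,\Z)$, the first coordinate of $\ord(\nu)$ being even versus odd corresponds to $b_\nu$ being even versus odd. On the other hand, $\gamma_\nu$ is the $\A$-preimage of a path in $\A(C^\bullet)$ from $\A(c_0)$ to $\A(\nu)$, and this path can be taken inside the real locus $\R C^\bullet$ (the amoeba restricted to $\R C^\bullet$ is one-to-one onto the boundary of its image, and $c_0$, $\nu$ both lie in $\R C^\bullet$); tracking the sign of the $z$-coordinate along such a real path, the monodromy of $\operatorname{sign}(z)$ is governed by how many times the path crosses the divisor $\{z=0\}$-direction, which is measured mod $2$ by the relevant coordinate of the homology class — concretely, by the intersection number of $\gamma_\nu$ with (the proper transform of) $\{w=0\}$, which is $b_\nu$. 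Normalizing so that $c_0$ sits over $z>0$, a node $\nu$ has $z(\nu)>0$ iff $b_\nu$ is even, i.e. iff $\ord(\nu)\in\{0\}\times 2\Z$, since for a kite the interior lattice points all lie on the $y$-axis. This is precisely the statement that the block of $C$ containing the positive-$z$ nodes is $\ord^{-1}(E^{even})$ and the other is $\ord^{-1}(E^{odd})$, giving the nodal partition $\{|E^{even}|,|E^{odd}|\}$.

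For the "in particular" clause I would invoke Theorem~\ref{thm:contraction}: given a target partition $\delta=\delta_1+\delta_2$ with $0\le\delta_1,\delta_2\le\lceil\frac{k+k'-1}{2}\rceil$, I need a subset $E\subset\Delta^\circ\cap\Z^2=\{0\}\times\{-k+1,\dots,k'-1\}$ of cardinality $\delta$ with $|E\cap(\{0\}\times2\Z)|=\delta_1$ and $|E\setminus(\{0\}\times2\Z)|=\delta_2$. Since the interior points $(0,j)$ range over $-k+1\le j\le k'-1$, a block of consecutive integers of length $k+k'-1=\delta+g$, the number of even (resp. odd) values available is $\lceil\frac{k+k'-1}{2}\rceil$ or $\lfloor\frac{k+k'-1}{2}\rfloor$; the hypothesis on $\delta_1,\delta_2$ together with $\delta_1+\delta_2=\delta\le k+k'-1$ guarantees that one can pick $\delta_1$ of the even ones and $\delta_2$ of the odd ones (a short counting check, distinguishing the parity of $k+k'-1$). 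Then Theorem~\ref{thm:contraction} produces a (torically transverse, after a further small perturbation if needed) simple Harnack curve in $\sv{g,\Li}^{\irr}$ whose set of nodes has order map exactly this $E$, and the first part of the lemma identifies its nodal partition as $\{\delta_1,\delta_2\}$.

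The main obstacle I anticipate is the rigorous justification of the sign dictionary in the second paragraph: one has to argue carefully that the path from $\A(c_0)$ to $\A(\nu)$ can be chosen to lift to a \emph{real} loop $\gamma_\nu$ staying in $\R C^\bullet$ (using that for a Harnack curve $\A$ restricted to $\R C^\bullet$ is injective onto the boundary of the amoeba, so the real locus already "sees" all of $H_1$), and that the parity of $\operatorname{sign}(z)$-monodromy along it equals the relevant coordinate of $[\gamma_\nu]$ mod $2$ — i.e. identifying which of the two coordinates of the homology class controls crossings of $\{z=0\}$ versus $\{w=0\}$, and matching this with the sign conventions in the definition of $\ord$ (including the sign correction noted after \cite[Lemma~11]{Mikh}). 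Once the conventions are pinned down this is elementary, but it is the step where a sign error would be easy to make, so I would carry it out explicitly on the standard kite coordinates $f=az^{-1}w^k+p(w)+bzw^k$.
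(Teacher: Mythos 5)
Your overall strategy coincides with the paper's: for a real simple Harnack curve the nodes are real isolated double points with $z=\pm\sqrt{a/b}$, so the nodal partition is the partition of the nodes by the sign of $z$, that sign is read off from the parity of a coordinate of $\ord(\nu)$, and the ``in particular'' clause follows from Theorem~\ref{thm:contraction} together with the counting check you describe (which is correct). The paper simply quotes \cite[Lemma~11]{Mikh} for the parity statement, so your plan to rederive it is reasonable. However, the dictionary you state is wrong, and in exactly the way you feared. Write $[\gamma_\nu]=(a_\nu,b_\nu)\in H_1(\Xb,\Z)$. The quantity controlling the sign of $z$ at the node is the winding of $\arg z$ along $\gamma_\nu$, namely $a_\nu=\tfrac{1}{2\pi}\oint_{\gamma_\nu}d\arg z$: since $\gamma_\nu$ is $\conj$-invariant and joins two real points, $\arg z$ changes by $\pi a_\nu$ along either half of the loop, so the sign of $z$ flips relative to $c_0$ precisely when $a_\nu$ is odd. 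As $\ord(\nu)=(-b_\nu,a_\nu)$, this is the parity of the \emph{second} coordinate of $\ord(\nu)$ --- which is why $E^{even}$ is cut out by the second coordinate being even --- and it matches the paper's proof. You instead tie the sign of $z$ to the parity of $b_\nu$ (your ``intersection number with $\{w=0\}$''), i.e.\ to the first coordinate of $\ord(\nu)$. For a kite every interior lattice point lies on the $y$-axis, so $b_\nu=0$ for every node; your criterion would therefore place all nodes in a single block and always yield the partition $\{\delta,0\}$, and the asserted equivalence ``$b_\nu$ even iff $\ord(\nu)\in\{0\}\times2\Z$'' is false: the left-hand condition is vacuously true, while the right-hand one is a condition on $a_\nu$.

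The error is local and fixable: replace $b_\nu$ by $a_\nu$ (equivalently, note that the $\{z=0\}$-crossing count is measured by the $z$-winding, not the $w$-winding), and the rest of the argument --- the conjugation-invariant lift $\gamma_\nu$, the reduction to parity, and the realization of any admissible partition via Theorem~\ref{thm:contraction} --- goes through and reproduces the paper's proof. But as written, the central step fails.
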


\begin{proof}
By definition, the nodal partition of $C$ is the partition given by the sign of the first coordinate of the nodes. By\cite[Lemma~11]{Mikh}, the sign corresponding to a node $\nu\in C$ is given by the parity of the second coordinate of $\ord(\nu)$. This proves the first part of the statement. In particular, the nodal partition $\delta=\delta_1+\delta_2$ of $C$ has to satisfy $0 \leqslant \delta_1, \delta_2 \leqslant \lceil \frac{k+k'-1}{2}  \rceil$, and for each such partition we can find $E$ such that $\{\vert E^{even}\vert, \, \vert E^{odd}\vert \}=\{\delta_1,\delta_2\}$. The result now follows from Theorem \ref{thm:contraction}.
\end{proof}

\begin{proof}[Proof of Theorem \ref{thm:kite}]
As in the tropical proof of the theorem, it is sufficient to construct for any admissible pair $(M,\kappa)$, an irreducible component $V\subseteq V_{g,\Li}^\irr$ such that $(M(V),\kappa(V))=(M,\kappa)$; see \S\ref{sec:proofkite} for the definition  of an admissible pair. By Lemmata~\ref{lem:harnackGphi} and \ref{lem:kite}, it suffices to find a subset $E\subseteq \Delta^\circ\cap M$ of cardinality $\delta$ such that $(\Delta\cap M)\setminus E$ generates $M$, and $\left||E^{even}|-|E^{odd}|\right|=\kappa$. Such a set $E$ can be taken as the set of interior vertices of the $M$-integral triangulation of $\Delta$ constructed in Section \ref{sec:proofkite}.
\end{proof}

\appendix
\section{}

In this section, we establish a connection between the Severi varieties associated to kites and the topological classification of polynomials as studied in \cite{KZ} and \cite{Zv}.

Let $\Delta:=\Delta_{k',k}$ be a kite, and $(X,\Li)$ the corresponding polarized toric surface. In this section it will be convenient to shift the kite so that its vertices become $(\pm 1,0),(0,-k),$ and $(0,k')$. For a given integer $g\geqslant 0$, set $\delta:=\delta_{\Z^2}(\Delta,g)$. Recall that a curve $[C]\in \LL$ is defined by a Laurent polynomial of the form
\begin{equation}
f(z,w)= \frac{a}{z}+p(w)+bz
\label{eq:defpol}
\end{equation}
where $a, b\in \C$ and $p(w)$ is a univariate Laurent polynomial. Recall from Section \ref{sec:nodalpart} that any curve $[C]\in V_{g,\Li}^{\irr}$ has an associated nodal partition $\delta=\delta_1+\delta_2$. Our first observation is that the later partition determines the \emph{passport} of the polynomial $p(w)$.

Consider a branched cover $p : S \rightarrow \cp{1}$ of degree $d$ from a compact orientable surface $S$. For any critical value $b\in \cp{1}$ of $p$ with fiber $p^{-1}(b):=\{a_1,\dotsc,a_k\}$, we have a partition $d=d_1+\cdots+d_k$ where $d_i$ is the multiplicity of $p$ at $a_i$. In turn,  the passport $\Sigma$ of $p$ is the collection of the partitions for all critical values of $p$ other than $\infty$. In particular, any Laurent polynomial admits a passport.

Denote by $\pkk$ the space of Laurent polynomials with set of exponents contained in $[-k,k']$, to which the polynomial $p(w)$ from \eqref{eq:defpol} belongs. For a given passport $\Sigma$, denote by $\pkk_\Sigma$ the subspace of $\pkk$ consisting of polynomials with passport $\Sigma$. For $k=0$, the space $\mathcal{P}^{k',k}_\Sigma$ is a central object in the topological classification of polynomials, see for instance \cite{KZ}. It follows from the proof of \cite[Theorem 6]{KZ} that $\pkk_\Sigma$ is a smooth variety of complex dimension $m+2$, where $m$ is the number of distinct finite critical values of $\Sigma$. In particular, a path-connected component of $\pkk_\Sigma$ is irreducible and vice versa.

Let us denote by $\{2^\delta\}$ the partition of $d$ with exactly $\delta$ summands ``2'' and remaining summands ``1''. Since the case $\delta=0$ corresponds to a regular value, every time $\{2^0\}$ appears in a passport we disregard it. Then, we have the following simple lemma.

\begin{lem}\label{lem:passport}
Let $V\subseteq V_{g,\Li}^{\irr}$ be an irreducible component with nodal partition $\delta=\delta_1+\delta_2$, then there is an open Zariski subset of curves $[C]\in V$ for which
the Laurent polynomial $p(w)$ of \eqref{eq:defpol} has passport $\big\lbrace \{ 2^{\delta_1} \}, \{ 2^{\delta_2} \}, \{2\}, \cdots,\{2\} \big\rbrace$.
\end{lem}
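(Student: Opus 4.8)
The plan is to connect the two partitions attached to a general curve $[C]\in V$ — the nodal partition $\delta=\delta_1+\delta_2$ coming from the $z$-coordinates of the nodes, and the passport of $p(w)$ coming from the critical points of the rational function $p(w)/?$ — via the description of the nodes already recorded in Section~\ref{sec:nodalpart}. Recall from there that for a Laurent polynomial $f(z,w)=a z^{-1}+p(w)+b z$ (in the shifted kite coordinates $(\pm 1,0),(0,-k),(0,k')$, so the $w^k$ factor is absorbed into $p$), a node $(z,w)\in C^\bullet$ satisfies $-az^{-1}+bz=0$ and $p'(w)=0$; the two blocks of the nodal partition correspond to the two square roots $z=\pm\sqrt{a/b}$, and for each such $z$ the number of nodes in that block equals the number of critical points $w$ of $p$ with critical value $p(w)=-2bz$ (equivalently $-p(w)$ equal to one of the two values $2b\sqrt{a/b}$ or $-2b\sqrt{a/b}$, which are negatives of each other).

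First I would make precise that for a general $[C]\in V$ the curve is torically transverse with exactly $\delta$ nodes, all in $\Xb$, all of $p$'s critical points simple, and the two special critical values $v_\pm:=\mp 2b\sqrt{a/b}$ are attained by $p$ only at simple critical points and are distinct from all other critical values of $p$; this is an open dense condition, so there is a Zariski-open $U\subseteq V$ on which it holds, and on $U$ the two partitions are locally constant (they are already known to be invariants of the component). On this locus, $p:\mathbb{P}^1\to\mathbb{P}^1$ is a degree-$d$ branched cover (with $d=k+k'$, the degree of $p$ as a rational function in $w$ after clearing the pole at $0$; note $\infty\in\mathbb{P}^1$ is totally ramified over $\infty$ and is excluded from the passport). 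The fiber over $v_+$ then consists of $\delta_1$ simple critical points together with $d-2\delta_1$ regular points, giving the partition $\{2^{\delta_1}\}$, and similarly the fiber over $v_-$ gives $\{2^{\delta_2}\}$. Every other finite critical value of $p$ is, by genericity, the image of exactly one simple critical point, contributing a partition $\{2\}=\{2^1\}$. Hence the passport of $p$ is exactly $\big\{\{2^{\delta_1}\},\{2^{\delta_2}\},\{2\},\dots,\{2\}\big\}$, which is the assertion.

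The only genuine points to verify carefully are (i) that $\delta_1+\delta_2=\delta$ equals the total number of critical points of $p$ lying over $v_+$ or $v_-$, i.e.\ that {\em all} nodes of $C$ are accounted for this way and no two distinct nodes collapse to one critical point of $p$ — but distinct nodes with the same $z$ have distinct $w$ (two points of $C^\bullet$ with the same $(z,w)$ coincide), and for a general curve the critical points are simple so the count is exact; and (ii) that the remaining $\delta-\delta_1-\delta_2=0$ critical points — there are none, consistent with the fact that a general $p$ in this stratum has $d-1$ simple critical points total, $\delta_1+\delta_2$ of which sit over $\{v_+,v_-\}$ and the other $d-1-\delta_1-\delta_2$ over pairwise-distinct values. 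Since $\delta=\delta_1+\delta_2$ is forced by Section~\ref{sec:nodalpart}, the bookkeeping is automatically consistent. I do not expect a serious obstacle here: the statement is essentially a translation, and the main (minor) care is in handling the genericity conditions and the role of the point at infinity so that the passport has the stated shape. One could phrase the whole argument by restricting to the Zariski-open locus where $[C]$ is torically transverse, $p$ is Morse away from $\{v_+,v_-\}$, and $v_+\neq v_-$ together with $v_\pm$ disjoint from the other critical values; on this locus the claimed passport is read off directly, and this locus is nonempty (hence dense) because $V$ is irreducible and such curves exist by the Harnack-curve constructions of Theorem~\ref{thm:contraction} and Lemma~\ref{lem:kite}.
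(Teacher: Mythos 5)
Your translation of the nodal data into branched-cover data is correct and agrees with the paper: a node of $C$ forces $p'(w)=0$ with $p(w)=\mp 2\sqrt{ab}$, a node is an \emph{ordinary} double point exactly when the corresponding critical point of $p$ is simple, and the two blocks of the nodal partition are the fibers over the two special values $v_\pm$. Given the genericity you postulate, reading off the passport $\big\lbrace \{2^{\delta_1}\},\{2^{\delta_2}\},\{2\},\dotsc,\{2\}\big\rbrace$ is indeed immediate. The problem is that the genericity itself is the entire content of the lemma, and your argument for it does not work. Nodality of $C$ constrains $p$ only over the two values $v_\pm$; it says nothing about the critical points of $p$ lying over other values. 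The condition ``$p$ is Morse away from $v_\pm$ with pairwise distinct remaining critical values'' is Zariski-open in $V$, but openness gives density only if the locus is nonempty \emph{on the given component} $V$, and $V$ is a proper subvariety of $\PSec$ which could a priori be entirely contained in the degenerate locus. Your appeal to Theorem~\ref{thm:contraction} and Lemma~\ref{lem:kite} does not repair this: those results produce a simple Harnack curve with the prescribed nodal partition in \emph{some} component realizing that partition, whereas the whole point of the paper is that several components can share the same nodal partition (they are distinguished by the lattice $M(V)$; e.g.\ on a component with $M(V)$ of index $r\geqslant 2$ the polynomial $p$ has the constrained shape $q^r$ or $q^r-2(ab)^{r/2}$). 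Moreover, even the Harnack curves themselves are poor witnesses: for them $p$ is Chebyshev-like, with many critical points sharing few critical values, so they need not lie in your open locus at all.

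The paper closes exactly this gap with a dimension count rather than an explicit witness: the number $m$ of distinct finite critical values of $p$ is bounded above in terms of $k,k',\delta$, with equality if and only if the passport is the asserted one; on the other hand $\dim \pkk_\Sigma=m+2$ (from the Khovanskii--Zvonkin description of spaces of polynomials with fixed passport), and comparing this with the known dimension $\dim V=|\partial\Delta\cap\Z^2|+g-1=k+k'+2-\delta$ of the Severi variety, after accounting for the fibers of $f\mapsto p$ and the fact that $a,b$ determine the two special critical values, forces $m$ to attain its maximum for the generic curve of \emph{every} component $V$. Some argument of this kind --- showing that the generic passport on $V$ is the least degenerate one compatible with the nodal partition --- is indispensable, and it is the step your proposal is missing.
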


\begin{proof}
Assume for simplicity that $0<\delta_2\leqslant\delta_1$. The case $\delta_2=0$ is similar.
The coordinates $(z,w)$ of a node of a curve $[C]\in V_{g,\Li}^{\irr}$ defined by \eqref{eq:defpol} satisfy the conditions $f(z,w)=\partial_zf(z,w)=\partial_wf(z,w)$ or equivalently
\[p'(w)=0, \; z^2=\frac{a}{b} \; \text{ and } \; p(w)=-\left(\frac{a}{z}+bz\right).\]
It follows that $p(w)=\pm2\sqrt{ab}$, and the nodal partition corresponds to the distribution of the $w$'s over the two critical values $\pm2\sqrt{ab}$ among the $\delta$-many nodes of $C$. In particular, the number $m$ of distinct finite critical values of the polynomial $p(w)$ is at most $k+k'-1 -(\delta-2)=k+k'+1-\delta$ and there is equality if and only if the passport of $p(w)$ is $\big\lbrace \{ 2^{\delta_1} \}, \{ 2^{\delta_2} \}, \{2\}, \cdots,\{2\} \big\rbrace$.

Clearly, the passport of $p(w)$ is constant as long as $[C]$ lies in a certain open Zariski subset of $V$. We denote this generic passport by $\Sigma$. According to the dimension formula of $\pkk_\Sigma$ and the fact that the $2$ critical values of $p(w)$ corresponding to nodes of $C$ are determined by $a$ and $b$, the locus of polynomials $p(w)$ corresponding to general curves $[C]\in V$ has dimension $m+2-2=m$. Therefore, $$k+k'+2-\delta=\dim(V)=m+2-1=m+1,$$
and hence $m=k+k'+1-\delta$. Thus, $\Sigma$ is the sought passport.
\end{proof}

\begin{lem}\label{lem:irredcompbij}
Let $\Sigma:= \big\lbrace \{ 2^{\delta_1} \}, \{ 2^{\delta_2} \}, \{2\}, \cdots,\{2\} \big\rbrace$ be a passport, where $\delta=\delta_1+\delta_2$ is a partition of $\delta$. Then, the number of irreducible components $V\subseteq  V_{g,\Li}^{\irr}$ with nodal partition $\delta=\delta_1+\delta_2$ is equal to the number of irreducible components of $\pkk_\Sigma$.
\end{lem}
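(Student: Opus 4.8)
The plan is to link the two sides by a chain of fibrations with connected fibres. Since Severi varieties on toric surfaces are smooth and the nodal partition is an invariant of each irreducible component, the union $W\subseteq V_{g,\Li}^{\irr}$ of the irreducible components with nodal partition $\delta=\delta_1+\delta_2$ is a smooth, open and closed subvariety whose irreducible components coincide with its connected components; likewise $\pkk_\Sigma$ is smooth by the cited consequence of \cite[Theorem~6]{KZ}, so its irreducible and connected components agree. Hence it suffices to prove that $W$ and $\pkk_\Sigma$ have the same number of connected components.

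First I would introduce the incidence variety $\mathcal Q$ of triples $(p,a,b)$, $p\in\pkk_\Sigma$, $a,b\in\C^*$, for which $f=a/z+p(w)+bz$ defines a curve lying in $W$. Forgetting $p$ (which is read off from $f$) gives a surjection $q_W\colon\mathcal Q\to W^\circ$ onto the dense open locus of torically transverse curves whose defining $p$ has passport exactly $\Sigma$ (dense by Lemma~\ref{lem:passport}); as the integral curve determines its defining section up to a scalar, the fibres of $q_W$ are the orbits of the free scaling $(p,a,b)\mapsto(tp,ta,tb)$, so $q_W$ is a principal $\C^*$-bundle and $\#\pi_0(\mathcal Q)=\#\pi_0(W^\circ)=\#\pi_0(W)$. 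Forgetting $(a,b)$ gives a second map $q_\Sigma\colon\mathcal Q\to\pkk_\Sigma$. By the node equations $p'(w)=0$, $z^2=a/b$, $p(w)=\pm2\sqrt{ab}$ recalled in the proof of Lemma~\ref{lem:passport}, a triple $(p,a,b)$ lies in $\mathcal Q$ precisely when the two critical values of $p$ realizing the ramification profiles $\{2^{\delta_1}\}$ and $\{2^{\delta_2}\}$ are $2\sqrt{ab}$ and $-2\sqrt{ab}$; hence the image $Y:=q_\Sigma(\mathcal Q)$ is the locus of those $p$ whose two distinguished critical values sum to zero, and over a generic such $p$ the fibre $\{(a,b):ab=v^2/4\}$ (with $\pm v$ the distinguished values) is a single free orbit of the toric action $z\mapsto sz$. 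Thus $q_\Sigma\colon\mathcal Q\to Y$ is a principal $\C^*$-bundle over a dense open of $Y$ and $\#\pi_0(\mathcal Q)=\#\pi_0(Y)$.

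It remains to show $\#\pi_0(Y)=\#\pi_0(\pkk_\Sigma)$. For this I would use that $\pkk_\Sigma$ carries an action of the connected group generated by the scalings $p\mapsto tp$, the translations $p\mapsto p+c$, and the source rescalings $p(w)\mapsto p(\mu w)$, all of which preserve $\Sigma$. Translating by $c$ shifts every critical value by $c$, hence shifts the sum of the two distinguished values by $2c$; so every $p$ can be translated into $Y$, and since the translation group is connected this translation keeps $p$ inside its irreducible component. Therefore $Y$ meets every irreducible component of $\pkk_\Sigma$, and it meets each of them in an irreducible subvariety, being the zero locus of the function ``sum of the two distinguished critical values'', which is globally defined once the two values have been singled out. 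Combining the displayed equalities then gives $\#\pi_0(W)=\#\pi_0(Y)=\#\pi_0(\pkk_\Sigma)$, as desired.

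The step requiring genuine care — and the main obstacle — is what ``the two distinguished critical values'' means when the profiles $\{2^{\delta_1}\}$, $\{2^{\delta_2}\}$ fail to single out critical values. If $\delta_2=0$ only one value is distinguished, the constraint becomes that $-v$ is \emph{not} a critical value, $Y$ is in fact dense open, and the last step is immediate. The delicate case is $\delta_1=\delta_2=1$, where $\{2\}$ is also the generic profile: there $q_\Sigma$ is generically finite over the irreducible divisor ``$p$ has a pair of critical values that are negatives of one another'', and one should first pass to the connected finite cover of $\pkk_\Sigma$ parametrizing the choice of such a pair, over which the argument above runs verbatim. In all cases the auxiliary maps are $\C^*$-bundles only over dense open loci, which is harmless for a component count, and the relevant dimension identities are exactly those already verified in the proof of Lemma~\ref{lem:passport}.
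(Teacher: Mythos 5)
Your proposal follows essentially the same route as the paper's proof: both reduce the statement to the observation that the assignment $f=\frac{a}{z}+p(w)+bz\mapsto p(w)$ has connected ($\C^*$-orbit) fibres over its image $Y$ (the locus where the two distinguished critical values sum to zero), and that the affine reparametrizations $p\mapsto\alpha p+\beta$, which preserve each component of $\pkk_\Sigma$, move any polynomial into $Y$. Two points in your write-up need attention. First, your justification that $Y$ meets each component $P\subseteq\pkk_\Sigma$ in an \emph{irreducible} set --- ``being the zero locus of the function'' $s=v_1+v_2$ --- is not a valid reason: zero loci of regular functions need not be irreducible. The claim is nevertheless true and follows from the translation action you already invoke, since $p\mapsto\bigl(p-\tfrac{1}{2}s(p),\,s(p)\bigr)$ identifies $P$ with $(Y\cap P)\times\C$, so $Y\cap P$ is irreducible because $P$ is. This repair matters: without irreducibility of $Y\cap P$, passing to the dense open locus over which $q_\Sigma$ is a $\C^*$-bundle could change $\pi_0$, and the chain $\#\pi_0(\mathcal Q)=\#\pi_0(Y)=\#\pi_0(\pkk_\Sigma)$ does not close.

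Second, and more seriously, the case $\delta_1=\delta_2=1$ is a genuine gap rather than a ``delicate point.'' There all critical values are simple, the function $s$ is undefined, $Y$ is the union of the loci $\{v_i+v_j=0\}$ over all $\binom{m}{2}$ unordered pairs, and your fix requires the degree-$\binom{m}{2}$ cover of $\pkk_\Sigma$ parametrizing such pairs to be \emph{connected}. That is precisely the assertion that the monodromy of the critical values of a generic Laurent polynomial in $\pkk$ acts transitively on unordered pairs, which you state but do not prove. It can be established --- the monodromy is transitive because the incidence variety $\{(p,w)\,:\,p'(w)=0\}$ is irreducible, and it is generated by transpositions arising from simple collisions of two critical values, hence is the full symmetric group --- but some such argument must be supplied, and this is exactly the case used in Corollary~\ref{lem:delta2}. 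To be fair, the paper's own proof is equally terse here: the claim that the continuous map $g$ ``induces an injective map between the sets of irreducible components'' hides the same connectivity issue.
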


\begin{proof}
Again, we assume for simplicity that $0<\delta_2\leqslant\delta_1$. The case $\delta_2=0$ is similar. Denote by $V_{\{\delta_1,\delta_2\}} \subseteq V_{g,\Li}^{\irr}$ the union of the irreducible components with nodal partition $\delta=\delta_1+\delta_2$, and by $\mathscr{C}_{\{\delta_1,\delta_2\}}\subset H^0(X,\Li)\setminus\{0\}$ the cone over it. Then there is a natural bijection between the sets of the irreducible components of $V_{\{\delta_1,\delta_2\}}$ and $\mathscr{C}_{\{\delta_1,\delta_2\}}$. Furthermore, $V_{\{\delta_1,\delta_2\}}$ and $\mathscr{C}_{\{\delta_1,\delta_2\}}$ are disjoint unions of their irreducible components.

By Lemma~\ref{lem:passport}, there exists a natural map $g\: \mathscr{C}_{\{\delta_1,\delta_2\}}\rightarrow\pkk_\Sigma$ mapping $f(z,w)= \frac{a}{z}+p(w)+bz$ to $p(w)$. Plainly, $g$ is continuous, and hence induces an injective map between the sets of irreducible components. We claim that the latter map is surjective. Indeed, any component $W\subseteq \pkk_\Sigma$ is invariant under affine transformations $p\mapsto\alpha p+\beta$, where $\alpha\in \C^\ast, \beta \in \C$. Therefore, $W$ contains $p$, whose two special critical values are $\pm 2$. Let $C$ be the curve given by the section $f(z,w)=\frac{1}{z}+p(w)+z$. It has nodal partition $\delta=\delta_1+\delta_2$, an hence $W$ contains the image of the irreducible component of $\mathscr{C}_{\{\delta_1,\delta_2\}}$ containing $f$. This completes the proof.
\end{proof}

The above lemma can be used to obtain information on Severi varieties from the spaces $\pkk_\Sigma$ or the other way around, as illustrated by the following.

\begin{cor}\label{lem:delta2}
Assume that $k+k'\geqslant 5$ and that $\delta=2$. Then, the Severi variety $V_{g,\Li}^{\irr}$ has exactly $2$ irreducible components.
\end{cor}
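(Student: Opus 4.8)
The plan is to combine Lemma~\ref{lem:irredcompbij} with the known structure of $V_{g,\Li}^{\irr}$ for kites when $\delta=2$. Since $\delta=2$, the nodal partition $\delta=\delta_1+\delta_2$ with $\delta_1\geqslant\delta_2$ can only be $2=2+0$ or $2=1+1$, so the signature $\kappa(V)$ of any irreducible component is either $2$ or $0$. By the analysis in Section~\ref{sec:nodalpart} (and since $k+k'\geqslant 5$ guarantees $\lceil\frac{k+k'-1}{2}\rceil\geqslant 2$, so neither value is obstructed), Lemma~\ref{lem:kite} shows both partitions are realized by simple Harnack curves, hence by actual components. Thus $V_{g,\Li}^{\irr}$ has at least two irreducible components, one for each signature value, and it suffices to show there are \emph{exactly} two — equivalently, that for each of the two passports $\Sigma_{\{2,0\}}=\big\lbrace\{2^2\},\{2\},\dotsc,\{2\}\big\rbrace$ and $\Sigma_{\{1,1\}}=\big\lbrace\{2\},\{2\},\dotsc,\{2\}\big\rbrace$, the space $\pkk_\Sigma$ is irreducible (i.e., connected).

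First I would reduce, via Lemma~\ref{lem:irredcompbij}, the count of components of $V_{g,\Li}^{\irr}$ with a given nodal partition to the count of connected components of $\pkk_\Sigma$ for the corresponding passport. The passport $\Sigma_{\{1,1\}}$ is the \emph{generic} passport: all critical values are simple, i.e., $p$ is a Morse Laurent polynomial on $[-k,k']$ with distinct critical values. The passport $\Sigma_{\{2,0\}}$ has a single non-generic critical value, at which two simple critical points collide into critical points with equal critical value (but $p$ still has $\delta$ nodes total — two over one value). Second, I would invoke the classification results on topological types of (Laurent) polynomials from \cite{KZ,Zv}: the connected components of $\pkk_\Sigma$ are enumerated by the braid-monodromy orbits of the corresponding ``cacti'' / constellations, and for these nearly-generic passports with at most one degenerate critical value among simple ones, the Hurwitz action is transitive, giving a single component. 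Concretely, for the fully generic passport the connectedness of the space of Morse polynomials with prescribed critical combinatorics on an interval is classical; for $\Sigma_{\{2,0\}}$ one degenerates one pair of critical values in the generic stratum, and any two such configurations are joined by a path in the discriminant stratum because the monodromy permutes the critical points transitively.

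The main obstacle I anticipate is precisely establishing the connectedness of $\pkk_{\Sigma_{\{2,0\}}}$, since this is a positive-codimension stratum of the discriminant in $\pkk$ and its connectedness does not follow formally from the connectedness of the generic stratum. The cleanest route is probably to quote the relevant statement from \cite{KZ} or \cite{Zv} about the number of connected components of strata of polynomials/Laurent polynomials with a single special critical value of the simplest degenerate type: when the special value carries a single collision of two simple critical points, the braid-group (Hurwitz) action on the set of admissible combinatorial data is transitive, so the stratum is connected. Alternatively, one can argue directly: a Laurent polynomial $p$ realizing $\Sigma_{\{2,0\}}$ is a small perturbation (within the discriminant) of a generic one, and since the generic stratum $\pkk_{\Sigma_{\{1,1\}}}$ is connected and the discriminant wall one crosses is connected away from deeper strata, $\pkk_{\Sigma_{\{2,0\}}}$ inherits connectedness. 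Once both $\pkk_\Sigma$'s are shown connected, Lemma~\ref{lem:irredcompbij} gives exactly one component of $V_{g,\Li}^{\irr}$ per partition, hence exactly two in total, completing the proof.
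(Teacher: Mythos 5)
Your overall strategy is the same as the paper's: reduce, via Lemma~\ref{lem:irredcompbij}, to showing that $\pkk_\Sigma$ is irreducible for each of the two passports $\big\lbrace\{2\},\dotsc,\{2\}\big\rbrace$ and $\big\lbrace\{2^2\},\{2\},\dotsc,\{2\}\big\rbrace$, and your treatment of the generic passport (complement of the discriminant in an irreducible space) is fine. The gap is exactly where you anticipate it: the irreducibility of $\pkk_{\Sigma_{\{2,0\}}}$. Neither of your two proposed routes closes it. The appeal to a braid/Hurwitz transitivity statement from \cite{KZ} or \cite{Zv} is left unspecified, and no such general statement can be taken for granted here --- the whole point of those papers is that strata of polynomials with prescribed passports are often \emph{not} connected, and the paper itself refers to \cite[Conjecture~13]{Zv} for closely related connectivity questions, so this is conjectural territory rather than a quotable theorem. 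Your fallback degeneration argument is circular: knowing that the open stratum $\pkk_{\Sigma_{\{1,1\}}}$ is connected tells you nothing about the number of components of the codimension-one stratum you land on when two critical values collide; the assertion that ``the discriminant wall one crosses is connected away from deeper strata'' is precisely the statement to be proven, and in general a discriminant can perfectly well have several components all adjacent to a connected open stratum.

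The paper closes this gap with a short direct argument that you are missing: after acting by pre-composition with an affine map one may assume the two critical points lying over the special critical value are $0$ and $1$; the conditions $p'(0)=p'(1)=0$ and $p(0)=p(1)$ are then \emph{linear} in the coefficients of $p$, so the normalized stratum is a Zariski-open subset of a linear subspace of $\pkk$, hence irreducible, and therefore so is $\pkk_{\Sigma_{\{2,0\}}}$. I recommend you replace your monodromy/degeneration discussion with an argument of this kind (taking care, for $k>0$, about which reparametrizations actually preserve the space of Laurent polynomials with exponents in $[-k,k']$). The first half of your write-up --- identifying the two possible nodal partitions and their realizability via Lemma~\ref{lem:kite} --- is correct but largely redundant once Lemma~\ref{lem:irredcompbij} is invoked, since that lemma already gives a bijection between components of $V_{g,\Li}^{\irr}$ with a given nodal partition and components of the corresponding $\pkk_\Sigma$.
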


\begin{proof}
Since $\delta=2$, there are exactly 2 possible nodal partitions, namely $2=1+1$ and $2=2+0$. If $\Sigma_1$ and $\Sigma_2$ are the corresponding passport as in Lemma \ref{lem:irredcompbij}, then $\pkk_{\Sigma_1}$ is the space of Laurent polynomials with distinct critical values. In particular, it is irreducible. The polynomials in $\pkk_{\Sigma_2}$ have only simple critical values except one, whose preimage contains $2$ critical point. We claim that $\pkk_{\Sigma_2}$ is also irreducible. Indeed, acting by pre-composition with an affine linear map, we can fix the $2$ critical points over the special critical value to be $0$ and $1$. As the polynomials in $\pkk_{\Sigma_2}$ satisfying this extra condition form a linear subspace, this proves the claim. The result now follows from Lemma \ref{lem:irredcompbij}.
\end{proof}

\begin{cor}\label{cor:lowerbound}
Let $k,k',g,\delta,\kappa,\delta_1,$ and $\delta_2$ be non-negative integers such that $k'\geqslant k$,  $k'>0$, $g+\delta=k+k'-1$, $\delta=\delta_1+\delta_2$ and $\kappa=\vert \delta_1-\delta_2\vert$. Denote by $\Sigma:=\big\lbrace \{ 2^{\delta_1} \}, \{ 2^{\delta_2} \}, \{2\}, \cdots,\{2\} \big\rbrace$  the passport of Laurent polynomials in $\pkk$. Then, the number of irreducible components of $\pkk_\Sigma$ is bounded from below by the number of lattices $M\subset \Z^2$ such that $(M,\kappa)$ is an admissible pair, see Section \ref{sec:proofkite}.
\end{cor}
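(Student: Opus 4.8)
The plan is to combine the two bijection results already established --- Lemma~\ref{lem:irredcompbij}, which identifies the irreducible components of $\pkk_\Sigma$ with the irreducible components of $V_{g,\Li}^{\irr}$ having nodal partition $\delta=\delta_1+\delta_2$ --- with the lower bound extracted from the topological (or tropical) proof of Theorem~\ref{thm:kite}. Concretely, by Lemma~\ref{lem:irredcompbij}, the number of irreducible components of $\pkk_\Sigma$ equals the number of irreducible components $V\subseteq V_{g,\Li}^{\irr}$ with nodal partition $\{\delta_1,\delta_2\}$, equivalently with signature $\kappa(V)=|\delta_1-\delta_2|=\kappa$. So it suffices to show that the number of such components is at least the number of lattices $M\subset\Z^2$ for which $(M,\kappa)$ is an admissible pair.

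First I would note that the hypotheses $g+\delta=k+k'-1$ and $\delta=\delta_{\Z^2}(\Delta,g)$ are consistent (this is exactly the kite relation $\delta+g+1=k+k'$ recorded in Section~\ref{sec:nodalpart}), so the setup of Theorem~\ref{thm:kite} applies verbatim. Then for each lattice $M$ with $(M,\kappa)$ admissible, the proof of Theorem~\ref{thm:kite} --- in its topological form, this is the last displayed construction in Section~\ref{sec:proofs} using Lemmata~\ref{lem:harnackGphi} and \ref{lem:kite}, or in its tropical form the construction of Section~\ref{sec:proofkite} --- produces an irreducible component $V=V(M,\kappa)\subseteq V_{g,\Li}^{\irr}$ with $(M^\trop(V),\kappa(V))=(M,\kappa)$ (resp. $(M(V),\kappa(V))=(M,\kappa)$). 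Since the lattice invariant $M^\trop(V)$ (resp. $M(V)$) is genuinely an invariant of the component, distinct lattices $M$ give distinct components $V(M,\kappa)$; moreover all of these components share the same signature $\kappa$, hence the same nodal partition $\{\delta_1,\delta_2\}$, and therefore all map into $\pkk_\Sigma$ under the correspondence of Lemma~\ref{lem:irredcompbij}. This yields at least as many components of $\pkk_\Sigma$ as there are admissible lattices $M$, which is the claimed bound.

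The one genuinely delicate point --- and the step I would write out with care --- is the bookkeeping of which components carry the prescribed signature. When $(M,\kappa)$ is admissible with $r=[\Z^2:M]$ even, the construction forces $\kappa(V)=g+1$, which must match the $\kappa$ in the statement; when $r$ is odd, the construction realizes any $\kappa$ with $0\le\kappa\le\min\{\delta_M(\Delta,g),g\}$ of the correct parity. So strictly speaking the count ``number of $M$ with $(M,\kappa)$ admissible'' already builds in this constraint by definition of admissibility, and the components $V(M,\kappa)$ it produces all have signature exactly $\kappa$; hence they all sit over a single passport $\Sigma$ depending only on $(\delta_1,\delta_2)$, as required. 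I would double-check that the $\delta_2=0$ and $\delta_2>0$ cases of Lemma~\ref{lem:irredcompbij} both feed correctly into this argument, but no new idea is needed. In short, the corollary is a formal consequence of Lemma~\ref{lem:irredcompbij} together with the lower-bound half of Theorem~\ref{thm:kite}, and the proof is essentially: ``apply Lemma~\ref{lem:irredcompbij}, then invoke the component-construction from the proof of Theorem~\ref{thm:kite} restricted to the fixed value of $\kappa$.''
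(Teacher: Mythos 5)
Your proposal is correct and follows the paper's own proof essentially verbatim: the paper likewise applies Lemma~\ref{lem:irredcompbij} to translate the count into components of $V_{g,\Li}^{\irr}$ with signature $\kappa$, and then invokes the construction of Section~\ref{sec:proofkite} to bound that number below by the number of admissible pairs $(M,\kappa)$. Your extra remarks on distinguishing components via the lattice invariant and on the signature bookkeeping are accurate elaborations of what the paper leaves implicit.
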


\begin{proof}
By Lemma \ref{lem:irredcompbij}, the number of irreducible components of $\pkk_\Sigma$ is equal to the number of irreducible components of $V^{\irr}_{g,\Li}$ with nodal partition $\delta=\delta_1+\delta_2$ or equivalently with signature $\kappa$. According to Section \ref{sec:proofkite}, the later number is bounded from below by the number of lattices $M\subset \Z^2$ such that $(M,\kappa)$ is an admissible pair.
\end{proof}

\begin{rem}
(i) For a sublattice $M\subseteq \Z^2$ of index $r$, a component $V\subseteq V_{g,\Li}^{\irr}$ for which $M(V)=M$, and any curve $[C]\in V$, the Laurent polynomial $p(w)$ of \eqref{eq:defpol} can be written $p(w)=q(w)^r$ if $r$ is odd, and $p(w)=q(w)^r-2(ab)^{r/2}$ if $r$ is even. Therefore, the existence of lattice $M$ compatible with $\kappa$ and with index $r\geqslant 2$ implies that the passport $\Sigma$ is decomposable in the sense of \cite{Zv}. 

(ii) Let $M, V, r, p,$ and $q$ be as above. By \cite{CHT20a}, the closure of any component of the Severi variety contains simple rational Harnak curves, and therefore $V$ contains simple Harnack curves. For simple Harnak curves, an explicit computation shows that the polynomial $q(w)$ is of type $S_n$ in the sense of \cite[Section 4.1]{Zv}, with $n=k+k'$. Thus, \cite[Conjecture~13]{Zv}, if true, implies the sharpness of the bound $\#_{k,k',g}$ of Theorem \ref{thm:kite} in the case $k=0$.

(iii) One could study the decomposability of more general passports by considering non-complete linear systems associated to support sets $A\subset \Z^2$, following the $A$-philosophy of \cite{GKZ}. For instance, let $\Delta$ be the polygon with vertices $(-m,0),(1,0),(0,k'), (0,-k)$ for some $k'\geqslant k\geqslant 0$ and $m\geqslant 2$, and $A$ the set consisting of the vertices of $\Delta$ and of the inner integral points of $\Delta$ that belong to the $y$-axis. Consider the associated polarized toric variety $(X,\Li)$ and the non-complete linear system $|\Li_A|\subset |\Li|$. Then the curves in $\vert \Li_A\vert$ are given by polynomials of the form
\[f(z,w)=\frac{a}{z^m}+p(w)+bz.\]
Such curves admit nodal partitions of length $m$. Thus, in this case the Laurent polynomials $p(w)$ may have up to $m+1$ special critical values, instead of $2$. It would be interesting to study the decomposability of the corresponding passports via the reducibility of the Severi varieties $V^{\irr}_{g,\Li_A}$.
\end{rem}

\bibliographystyle{alpha}
\bibliography{Draft}
\end{document}